\newtheorem{theorem}{Theorem}[section]
\newtheorem{lemma}[theorem]{Lemma}
\newtheorem{corollary}[theorem]{Corollary}
\newtheorem{conjecture}[theorem]{Conjecture}
\theoremstyle{definition}
\newtheorem{remark}[theorem]{Remark}
\numberwithin{equation}{section}
\mathchardef\hyphen="2D
\definecolor{AfonsoBlue}{RGB}{30,65,123}
\begin{document}

       \author{Afonso S. Bandeira}
       \address{Department of Mathematics, ETH Z\"urich, Switzerland}
       \email{bandeira@math.ethz.ch}


       \author{March T. Boedihardjo}

       \address{Department of Mathematics, University of California, Los Angeles, USA}


       \email{march@math.ucla.edu}

\title{The spectral norm of Gaussian matrices with correlated entries}

\begin{abstract}
We give a non-asymptotic bound on the spectral norm of a $d\times d$ matrix $X$ with centered jointly Gaussian entries in terms of the covariance matrix of the entries. In some cases, this estimate is sharp and removes the $\sqrt{\log d}$ factor in the noncommutative Khintchine inequality. {\bf This paper is superseded by https://arxiv.org/abs/2108.06312}
\end{abstract}

\maketitle

\section{Introduction}
Let $X$ be a $d\times d$ centered random matrix with (correlated) jointly Gaussian entries. We aim to provide an estimate for the expected spectral norm of $\mathbb{E}\|X\|$ in terms of the $d^{2}\times d^{2}$ covariance matrix $\mathbb{E}(X\otimes X)$ of the Gaussian entries. This problem is settled by the noncommutative Khintchine inequality \cite{Buchholz,LustPiquard,Oliveira} up to a $\sqrt{\log d}$ factor, namely,
\begin{equation}\label{Introeq1}
\|\mathbb{E}(X^{*}X)\|^{\frac{1}{2}}+\|\mathbb{E}(XX^{*})\|^{\frac{1}{2}}\lesssim\mathbb{E}\|X\|\lesssim \sqrt{\log d} \left(\|\mathbb{E}(X^{*}X)\|^{\frac{1}{2}}+\|\mathbb{E}(XX^{*})\|^{\frac{1}{2}}\right),
\end{equation}
where $\lesssim$ denotes smaller or equal up to multiplicative dimension-free constant.

The $\sqrt{\log d}$ factor on the right hand side of (\ref{Introeq1}) is, in general, required: if $X$ is diagonal with i.i.d.~standard Gaussian diagonal entries, then $\mathbb{E}\|X\|\sim\sqrt{\log d}$ and
$\|\mathbb{E}(X^{*}X)\|=\|\mathbb{E}(XX^{*})\|=1$.
By contrast, if the $d^{2}$ entries of $X$ are i.i.d.~standard Gaussian random variables, then $\mathbb{E}\|X\|\sim\sqrt{d}$ and $\|\mathbb{E}(X^{*}X)\|=\|\mathbb{E}(XX^{*})\|=d$
so in this case, the $\sqrt{\log d}$ factor can be removed. More generally, if the entries of $X$ are independent and the variances of the entries are homogeneous enough, then the $\sqrt{\log d}$ factor can be removed \cite{BandeiravanHandel,Ramon-inhomogeneous1,Ramon-inhomogeneous2}.

Estimates for the spectral norm of random matrices are a central tool in both pure and applied mathematics, we point the interested reader to the monograph~\cite{TroppMatrixConcentration} and references therein for applications. We note also that the extra dimensional factor often propagates to the applications resulting in suboptimal bounds. 

The extent to which the $\sqrt{\log d}$ factor can be removed in (\ref{Introeq1}), in general, is mostly unknown. A notable result in this direction, whose insights we build on, is the work of Tropp~\cite{Tropp2ndorder} which introduces a quantity $w(X)$, for a self-adjoint Gaussian matrix $X$, and shows that
\[\mathbb{E}\|X\|\lesssim \sqrt[4]{\log d}\|\mathbb{E}(X^{2})\|^{\frac12}+\sqrt{\log d}\cdot w(X)\]
for all (correlated) self-adjoint Gaussian matrices $X$. 
When all the $d^{2}$ entries of $X$ are i.i.d.~standard Gaussian, this estimate improves (\ref{Introeq1}) but is still not sharp because of the $\sqrt[4]{\log d}$ factor. Moreover, in general, computing $w(X)$ directly appears to be challenging. 

The following is the main result of this paper. 

\begin{theorem}\label{main1} 
Let $X$ be a $d\times d$ random matrix with jointly Gaussian entries and $\mathbb{E}X=0$, then
\[\mathbb{E}\|X\|\lesssim_{\epsilon}\|\mathbb{E}(X^{*}X)\|^{\frac{1}{2}}+\|\mathbb{E}(XX^{*})\|^{\frac{1}{2}}+d^{\epsilon}\|\mathbb{E}(X\otimes X)\|^{\frac{1}{2}},\]
for all $\epsilon>0$; here $\lesssim_\epsilon$ means less or equal up to a dimension-free multiplicative constant depending on $\epsilon$.
\end{theorem}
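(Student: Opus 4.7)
My plan is to use the Gaussian moment method. Decompose $X=\sum_{\alpha}g_{\alpha}A_{\alpha}$ with deterministic $A_{\alpha}$ and i.i.d.\ standard Gaussians $g_{\alpha}$, and set $\sigma:=\|\mathbb{E}(X^{*}X)\|^{\frac12}+\|\mathbb{E}(XX^{*})\|^{\frac12}$ and $M:=\mathbb{E}(X\otimes X)=\sum_{\alpha}A_{\alpha}\otimes A_{\alpha}\in\mathbb{C}^{d^{2}\times d^{2}}$. Starting from $\mathbb{E}\|X\|^{2p}\leq\mathbb{E}\,\mathrm{tr}((XX^{*})^{p})$, I would apply Wick's theorem to expand
\[
\mathbb{E}\,\mathrm{tr}((XX^{*})^{p})=\sum_{\pi\in\mathcal{P}_{2}(2p)}T_{\pi},
\]
where each $T_{\pi}$ is an explicit trace of a length-$2p$ alternating product of the $A_{\alpha}$'s with indices contracted according to the pairing $\pi$.

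I would split the sum into non-crossing (planar) and crossing (non-planar) contributions. The non-crossing pairings should deliver the $\sigma$ term: a standard free-probabilistic bookkeeping (the genus-$0$ count) shows their total is $\lesssim d\cdot\mathrm{Cat}_{p}\,\sigma^{2p}$, whose $(2p)$-th root is $O_{\epsilon}(\sigma)$ once $p\gtrsim(\log d)/\epsilon$, exactly as in Oliveira's proof of noncommutative Khintchine. For a crossing pairing with $k\geq 1$ crossings, the key identity is that each crossing forces two factors $A_{\alpha},A_{\beta}$ into a non-cyclic contraction that is precisely what $M$ computes; flattening $M$ to a $d^{2}\times d^{2}$ matrix and applying Cauchy--Schwarz should ``absorb'' each crossing into a factor of $\|M\|$, producing a bound of the form $T_{\pi}\leq d^{C}\|M\|^{k}\sigma^{2(p-k)}$ with $C$ an absolute constant. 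Summing over pairings (bounded in number by $(2p)!!$) and taking the $(2p)$-th root would then separate the two required contributions, $\sigma$ and $d^{\epsilon}\|M\|^{\frac12}$.

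The main obstacle is obtaining the crossing bound $T_{\pi}\leq d^{C}\|M\|^{k}\sigma^{2(p-k)}$ with $C$ essentially independent of $k$ and $p$. A naive contraction-by-contraction estimate loses an additional factor of $d$ for each crossing, yielding $d^{p}$ overall and completely ruining any improvement over noncommutative Khintchine. Avoiding this requires a careful graph-theoretic accounting of how ``tensor legs'' are shared across multiple crossings, using the cyclicity of the trace to regroup contractions so that the extra $d$-factors are bought back by matching them against vector dimensions in Cauchy--Schwarz. A secondary delicate point is the clean separation between $\sigma$ and $\|M\|^{\frac12}$ in the final bound -- so that only the latter carries the $d^{\epsilon}$ penalty -- which relies on using the exact free-probabilistic sharpness of the non-crossing contribution rather than a coarse aggregate upper bound on it.
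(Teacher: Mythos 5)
Your high-level strategy is the same as the paper's: reduce to a Gaussian series, bound a high moment $\mathbb{E}\,\mathrm{Tr}(X^{2p})$ with $p\sim\epsilon\log d$, expand by Wick's formula over pair partitions, control the noncrossing part by the free-probability/Buchholz bound (yielding $\sigma$), and charge the crossing part to $\|\mathbb{E}(X\otimes X)\|$. (One remark on the setup: you should take the $A_\alpha$ to be scaled \emph{eigenvectors} of $\mathbb{E}(X\otimes X)$, so that they are mutually orthogonal in Frobenius inner product and $\max_\alpha\|A_\alpha\|_F^2=\|M\|$; this orthogonality is used essentially below.) However, the step you flag as ``the main obstacle'' is precisely where all of the content of the proof lies, and the resolution is not the one you sketch. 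The paper never proves a per-pairing bound of the form $T_\pi\leq d^{C}\|M\|^{k}\sigma^{2(p-k)}$, and it does not classify pairings by their number of crossings. Instead it extracts exactly \emph{one} crossing $\{i_1,i_3\},\{i_2,i_4\}$ from each crossing pairing, re-sums all the remaining contractions back into a genuine Gaussian expectation $\mathbb{E}\,\mathrm{Tr}\bigl(\cdots A_{k_1}\cdots A_{k_2}\cdots A_{k_1}\cdots A_{k_2}\cdots\bigr)$ (Lemmas \ref{gaussiantensor} and \ref{gaussiantensor2}), decouples the random factors using Tropp's unitary-replacement lemma (Lemma \ref{tropplemma}), and then bounds the resulting double sum over $k_1,k_2$ by $\bigl(\max_k\|A_k\|_F\bigr)^2\bigl\|\sum_kA_k^2\bigr\|\,\mathrm{Tr}(Y^2)$ via Cauchy--Schwarz together with the completeness relation $\sum_kB_k^*LB_k=\mathrm{Tr}(L)I$ for an orthonormal basis of $M_d(\mathbb{R})$ (Lemmas \ref{orthtr} and \ref{tracecross}); the orthogonality $\mathrm{Tr}(A_{k_1}A_{k_2})=0$ kills the off-diagonal terms. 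This yields the recursion $a_p\leq d\,b_1^p+b_2\,a_{p-4}$ with $b_2\approx d^\epsilon\|M\|\sigma^2$, which is then iterated; the recursion, not a one-shot genus expansion, is what handles pairings with many crossings.

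There is also a structural problem with your plan of summing individual pairing bounds over all $(2p)!!$ pairings. Since signs can cancel, one must either exploit cancellation or group the crossing pairings so that the combinatorial prefactor stays of size $e^{O(p)}$: a factor $(2p-1)!!\approx p^{p}$ contributes $\sqrt{p}\sim\sqrt{\log d}$ (or worse, after the Young-inequality split of $\|M\|^{1/(2p)}\sigma^{1-1/p}$) upon taking $2p$-th roots, which destroys the improvement over noncommutative Khintchine. The paper handles this with a dedicated combinatorial lemma (Lemma \ref{combinatorics}) that partitions $\mathrm{Cr}_2([p])$ into at most $4^pp^2$ classes, each of the form $\{\nu\leq\nu_t\}$ for a partition $\nu_t$ exhibiting one distinguished crossing, so that each class can be re-summed into a single Gaussian expectation before any absolute values are taken; the $4^pp^2\leq 8^p$ count is exactly the source of the $d^\epsilon$. (In the special case of nonnegative entries one may overcount freely, which is why that case gets the better $(\log d)^2$ factor.) So while your outline points in the right direction, the decoupling-plus-completeness trace inequality and the resummation-plus-recursion scheme are missing, and without them the argument does not close.
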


Note that $\mathbb{E}(X\otimes X)$ is a linear transformation on the $d^2$ dimensional inner product
space $M_{d}(\mathbb{R})$ of $d\times d$ real matrices with $\langle A,B\rangle=\mathrm{Tr}(AB^{*})$ for $A,B\in M_{d}(\mathbb{R})$.

Before presenting a range of guiding examples and discussing the sharpness of this inequality, we state a ``user-friendly'' version of it. One can see that the first statement of Theorem \ref{main2} is equivalent to Theorem \ref{main1} by taking the $A_{1},\ldots,A_{n}$ in Theorem \ref{main2} being certain appropriately scaled eigenvectors of $\mathbb{E}(X\otimes X)$ in Theorem \ref{main1}. Moreover, when all entries of $A_{1},\ldots,A_{n}$ are nonnegative, the $d^{\epsilon}$ factor can be replaced by $(\log d)^{2}$.

\begin{theorem}\label{main2}
Let $g_{1},\ldots,g_{n}$ be i.i.d.~standard Gaussian random variables and $A_{1},\ldots,A_{n}\in M_{d}(\mathbb{R})$ satisfy $\mathrm{Tr}(A_{k_{1}}A_{k_{2}}^{*})=0$ for all $k_{1}\neq k_{2}$ in $[n]$. Then
\[\mathbb{E}\left\|\sum_{k=1}^{n}g_{k}A_{k}\right\|\lesssim_{\epsilon}\left\|\sum_{k=1}^{n}A_{k}^{*}A_{k}\right\|^{\frac{1}{2}}+\left\|\sum_{k=1}^{n}A_{k}A_{k}^{*}\right\|^{\frac{1}{2}}+d^{\epsilon}\max_{k\in[n]}\|A_{k}\|_{F},\]
for all $\epsilon>0$. If moreover, all entries of $A_{1},\ldots,A_{n}$ are nonnegative, then
\[\mathbb{E}\left\|\sum_{k=1}^{n}g_{k}A_{k}\right\|\lesssim\left\|\sum_{k=1}^{n}A_{k}^{*}A_{k}\right\|^{\frac{1}{2}}+\left\|\sum_{k=1}^{n}A_{k}A_{k}^{*}\right\|^{\frac{1}{2}}+(\log d)^{2}\max_{k\in[n]}\|A_{k}\|_{F}.\]
\end{theorem}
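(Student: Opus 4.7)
For the first inequality, the plan is to check its equivalence with Theorem \ref{main1}. Given orthogonal $A_1,\ldots,A_n \in M_d(\mathbb{R})$ with respect to $\langle A,B\rangle = \mathrm{Tr}(AB^*)$, set $X = \sum_{k=1}^n g_k A_k$. Then $\mathbb{E}(X^*X) = \sum_k A_k^* A_k$ and $\mathbb{E}(XX^*) = \sum_k A_k A_k^*$ by independence of the $g_k$. Viewing $\mathbb{E}(X\otimes X)$ as an operator on the inner product space $M_d(\mathbb{R})$, a direct computation gives $\mathbb{E}(X\otimes X)\,B = \sum_k \mathrm{Tr}(A_k B^*)\,A_k$, which by orthogonality of the $A_k$ has eigenvalues $\|A_k\|_F^2$ with orthonormal eigenvectors $A_k/\|A_k\|_F$; hence $\|\mathbb{E}(X\otimes X)\|^{1/2} = \max_k \|A_k\|_F$. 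The reverse direction, producing such $A_k$ from a general centered jointly Gaussian $X$, follows by taking the eigenvectors of $\mathbb{E}(X\otimes X)$ scaled by the square roots of their eigenvalues. Thus the first inequality transfers directly from Theorem \ref{main1}.

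For the second (nonnegative) statement, the goal is to replace $d^\epsilon$ by $(\log d)^2$. My plan is to exploit that for an entrywise-nonnegative matrix $M$, the operator norm $\|M\|$ is achieved on nonnegative unit vectors and is well-approximated by a small family of structured indicator-type test vectors. Concretely, I would dyadically decompose the supremizing pair of nonnegative unit vectors $(u,v)$ according to coordinate magnitude, writing each as a telescoping combination of normalized indicator vectors $\mathbf{1}_{S_\ell}/\sqrt{|S_\ell|}$ across $O(\log d)$ dyadic levels. For each such indicator pair, $u^\top X v = \sum_k g_k (u^\top A_k v)$ is a scalar Gaussian with variance $\sum_k (u^\top A_k v)^2 \leq \min(\|\sum_k A_k^* A_k\|, \|\sum_k A_k A_k^*\|)$, and a union bound (or chaining) over indicators at a single dyadic scale contributes one factor of $\sqrt{\log d}$, while chaining across dyadic levels contributes another $\sqrt{\log d}$; together these give the $\log d$ needed on the two spectral terms, while the tails below the finest dyadic scale are absorbed by $\max_k\|A_k\|_F$, with a possible further $\log d$ loss accounting for the squared exponent.

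The main obstacle is coupling the two chainings (in $u$ and in $v$) simultaneously without reintroducing unwanted $\sqrt{\log d}$ factors on the dominant terms $\|\sum_k A_k^*A_k\|^{1/2}$ and $\|\sum_k A_kA_k^*\|^{1/2}$. The nonnegativity hypothesis is essential here: it guarantees $u^\top A_k v \geq 0$ whenever $u,v\geq 0$, so the dyadic-indicator approximation of $u$ and $v$ produces a sum of nonnegative contributions with no internal cancellation. This is exactly what fails in the general case, where cancellations force the $d^\epsilon$ slack and prevent a clean reduction to a polylogarithmically-sized structured net. The delicate point will be ensuring that the chaining increments at each scale are themselves controlled by the two spectral quantities rather than by the weaker Frobenius quantity, so that $\max_k \|A_k\|_F$ only appears as the coefficient of the final $(\log d)^2$ term.
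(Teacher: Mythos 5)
Your treatment of the first statement is circular in the context of this paper: you reduce it to Theorem \ref{main1}, but Theorem \ref{main1} is not proved independently --- it is \emph{derived} from the first statement of Theorem \ref{main2} (your computation that $\|\mathbb{E}(X\otimes X)\|=\max_k\|A_k\|_F^2$ for orthogonal $A_k$ is exactly the equivalence the paper records, run in the opposite direction). So no proof of the first statement is actually supplied. The paper's proof is a trace-moment argument: expand $\mathbb{E}\,\mathrm{Tr}(X^p)$ via the Wick formula over pair partitions, bound the noncrossing part by Buchholz's operator Khintchine inequality, and bound the crossing part by grouping crossing pair partitions under at most $4^pp^2$ coarser partitions (Lemma \ref{combinatorics2}), estimating each group by Lemma \ref{tracecross2} (which rests on Tropp's unitary-invariance trick, Lemma \ref{tropplemma}, and the orthogonality $\mathrm{Tr}(A_{k_1}A_{k_2})=0$). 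This yields the recursion $a_p\leq d\,b_1^p+b_2a_{p-4}$ and the conclusion with $p\sim\epsilon\log d$.

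For the second statement your chaining plan has a concrete gap. The union bound ``over indicators at a single dyadic scale'' is over all subsets $S\subset[d]$ of a given size, of which there are $\binom{d}{|S|}$ --- exponentially many --- so it costs $\sqrt{|S|\log(d/|S|)}$, not $\sqrt{\log d}$; the dyadic decomposition of the (random, $X$-dependent) maximizing pair $(u,v)$ cannot be fixed in advance. Moreover, the role you assign to nonnegativity is off: $X=\sum_k g_kA_k$ is not entrywise nonnegative (the $g_k$ are signed), and nonnegativity of $u^{\top}A_kv$ does not remove cancellation in the Gaussian sum $\sum_kg_k(u^{\top}A_kv)$. Even in the independent-entry case this kind of argument requires the full Kahn--Szemer\'edi/Bandeira--van Handel machinery, and it is unclear how $\max_k\|A_k\|_F$ would emerge as the coefficient of the subleading term for correlated $A_k$. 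The paper's actual use of nonnegativity is entirely different and much simpler: it makes every term $\mathrm{Tr}(A_{f(1)}\cdots A_{f(p)})$ in the moment expansion nonnegative, so the crossing pair partitions may be \emph{overcounted} by summing over all quadruples $i_1<i_2<i_3<i_4$ with $\{i_1,i_3\},\{i_2,i_4\}$ paired; this replaces the $8^p$ factor in the recursion by $p^4$, and taking $p\sim\log d$ turns $d^{\epsilon}$ into $(\log d)^2$.
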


While Theorem~\ref{main2} is the one we use in the guiding examples, it is worth formulating an inequality for Gaussian series without the orthogonality condition; the following follows immediately from Theorem~\ref{main1} by noticing that the Gaussian series is a Gaussian matrix.

\begin{corollary}\label{corollary:gaussianseries}
Let $g_{1},\ldots,g_{n}$ be i.i.d.~standard Gaussian random variables and $H_{1},\ldots,H_{n}\in M_{d}(\mathbb{R})$. Then
\[\mathbb{E}\left\|\sum_{k=1}^{n}g_{k}H_{k}\right\|\lesssim_{\epsilon}\left\|\sum_{k=1}^{n}H_{k}^{*}H_{k}\right\|^{\frac{1}{2}}+\left\|\sum_{k=1}^{n}H_{k}H_{k}^{*}\right\|^{\frac{1}{2}}+d^{\epsilon}\sup_{\substack{B\in M_{d}(\mathbb{R})\\\|B\|_{F}\leq 1}}\left(\sum_{k=1}^{n}\langle H_k,B\rangle^2\right)^{\frac{1}{2}}\]
\end{corollary}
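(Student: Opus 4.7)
The plan is to apply Theorem \ref{main1} to the Gaussian matrix $X := \sum_{k=1}^{n} g_{k} H_{k}$ and then unpack each of the three quantities on the right-hand side in terms of the $H_{k}$'s. Since $g_{1},\ldots,g_{n}$ are i.i.d.~standard Gaussians and the $H_{k}$'s are deterministic, $X$ is a $d\times d$ centered jointly Gaussian matrix, so Theorem \ref{main1} gives
\[
\mathbb{E}\|X\|\;\lesssim_{\epsilon}\;\|\mathbb{E}(X^{*}X)\|^{\frac{1}{2}}+\|\mathbb{E}(XX^{*})\|^{\frac{1}{2}}+d^{\epsilon}\,\|\mathbb{E}(X\otimes X)\|^{\frac{1}{2}}.
\]

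For the first two terms I would use independence and unit variance of the $g_{k}$'s: expanding $X^{*}X=\sum_{k,j}g_{k}g_{j}H_{k}^{*}H_{j}$ and taking expectations kills the off-diagonal terms and leaves $\mathbb{E}(X^{*}X)=\sum_{k}H_{k}^{*}H_{k}$, and symmetrically $\mathbb{E}(XX^{*})=\sum_{k}H_{k}H_{k}^{*}$. This matches the first two terms of the stated bound on the nose.

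The only remaining step is to identify the third term, and this is where one has to be careful about the interpretation of $\mathbb{E}(X\otimes X)$ as a linear operator on $M_{d}(\mathbb{R})$ equipped with the inner product $\langle A,B\rangle=\mathrm{Tr}(AB^{*})$. Under this identification the operator $\mathbb{E}(X\otimes X)$ is positive semidefinite and its quadratic form is
\[
\langle B,\mathbb{E}(X\otimes X)\,B\rangle \;=\;\mathbb{E}\,\langle X,B\rangle^{2}
\]
for $B\in M_{d}(\mathbb{R})$. Plugging in $X=\sum_{k}g_{k}H_{k}$ and again using that $\mathbb{E}(g_{k}g_{j})=\delta_{kj}$ yields
\[
\mathbb{E}\,\langle X,B\rangle^{2} \;=\; \sum_{k=1}^{n}\langle H_{k},B\rangle^{2}.
\]
Taking the supremum over $B$ with $\|B\|_{F}\le 1$ and using positivity to express the operator norm as the supremum of the quadratic form gives
\[
\|\mathbb{E}(X\otimes X)\| \;=\; \sup_{\|B\|_{F}\le 1}\sum_{k=1}^{n}\langle H_{k},B\rangle^{2},
\]
which is exactly the square of the third term in the corollary. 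Combining with the display from Theorem \ref{main1} finishes the proof.

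The argument has no real obstacle beyond the bookkeeping just described; the only subtle point is fixing the correct identification of $X\otimes X$ with an operator on $(M_{d}(\mathbb{R}),\langle\cdot,\cdot\rangle)$ so that its norm matches the supremum appearing in the statement. Once that identification is in place, everything else is a routine Gaussian moment computation.
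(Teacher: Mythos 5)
Your proposal is correct and follows exactly the paper's route: the paper derives this corollary immediately from Theorem \ref{main1} by viewing the Gaussian series as a centered Gaussian matrix, and it uses the same identification $\|\mathbb{E}(X\otimes X)\|=\sup_{\|B\|_{F}\leq 1}\mathbb{E}\langle X,B\rangle^{2}$ that you spell out. Your computations of $\mathbb{E}(X^{*}X)$, $\mathbb{E}(XX^{*})$, and the quadratic form of $\mathbb{E}(X\otimes X)$ are all accurate.
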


\begin{remark}
 While outside the scope of this paper, we note that (i) Corollary \ref{corollary:gaussianseries} can be used to obtained non-asymptotic bounds on the expected spectral norm of sums of independent random matrices via the techniques described in~\cite{Troppelemappr} and (ii) it is, in general, possible to obtain tail bounds on the spectral norm of random matrices via a control on the expected spectral norm and a scalar concentration inequality.
\end{remark}

\begin{remark}
Theorem~\ref{main1} is not, in general, sharp. We expect that the $d^\epsilon$ factor is not needed and could be replaced by a $\sqrt{\log d}$ factor, but were not able to prove it. Furthermore, the term $\|\mathbb{E}(X \otimes X) \|$ does not appear to be the correct quantity in general. In particular, there are situations in which it is even weaker than the noncommutative Khintchine inequality \eqref{Introeq1}: namely, for $n=1$, we have $X=gA$ and $\|\mathbb{E}X\|\sim \|A\|$ while $\|\mathbb{E}(X \otimes X) \| = \|A\|_F^2$, which can be a factor of $d$ larger than $\|A\|^{2}$. Nevertheless, as we will see in the next section, Theorem~\ref{main1} captures the sharp behavior of the expected norm of a Gaussian matrix with correlated entries in several scenarios.
\end{remark}

\subsection{A Conjecture involving a weak variance parameter}

It has been conjectured, first implicitly in~\cite{TroppMatrixConcentration}, and then more explicitly in~\cite{Bandeira42Problems,Tropp2ndorder,vanHandel} 
that the correct parameter commanding the existence or not of the logarithmic factor in noncommutative Khintchine is the weak variance: for a $d\times d$ centered random matrix $X$ with jointly Gaussian entries and $\mathbb{E}X=0$, let
\[\sigma_{*}(X)=\sup_{\substack{v,w\in\mathbb{R}^{d}\\\|v\|_{2}=\|w\|_{2}=1}}(\mathbb{E}\langle Xv,w\rangle^{2})^{\frac{1}{2}}.\]
This parameter can be viewed as the injective norm of $\mathbb{E}X\otimes X$ when viewed as a fourth order
 tensor. It is also worth noting that this is the parameter governing fluctuations per Gaussian elimination
\[\mathbb{P}(|\|X\|-\mathbb{E}\|X\||\geq t)\leq 2e^{-t^{2}/(2\sigma_{*}(X)^{2})}.\]
Intuitively, in the language of Corollary~\ref{corollary:gaussianseries} and the particular case of self-adjoint matrices, the cancellations responsible for the removal of the $\sqrt{\log d}$ factor appear to be due to non-commutativity of the matrices $H_k$'s.

\begin{conjecture}\label{conj:NCK-improvement}
Let $X$ be a $d\times d$ centered random matrix with jointly Gaussian entries and, then
\[\mathbb{E}\|X\|\lesssim\|\mathbb{E}(X^{*}X)\|^{\frac{1}{2}}+\|\mathbb{E}(XX^{*})\|^{\frac{1}{2}} 
+\sqrt{\log d}\sup_{\substack{v,w\in\mathbb{R}^{d}\\\|v\|_{2}=\|w\|_{2}=1}}(\mathbb{E}\langle Xv,w\rangle^{2})^{\frac{1}{2}}\]
\end{conjecture}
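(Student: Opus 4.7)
The plan is to attack the conjecture via the moment method, refining the combinatorial bookkeeping so that the operator norm $\|\mathbb{E}(X\otimes X)\|^{1/2}$ that appears in Theorem \ref{main1} is replaced by its injective analogue $\sigma_*(X)$. The starting point is the standard reduction
\[\mathbb{E}\|X\| \;\le\; \bigl(\mathbb{E}\,\mathrm{Tr}((XX^{*})^{p})\bigr)^{1/(2p)},\]
valid for every integer $p$, coupled with the choice $p \asymp \log d$ so that a multiplicative $d^{1/(2p)}$ is absorbed by a universal constant. Wick's formula expresses the trace moment as a sum over pair partitions $\pi$ of the $2p$ factors of $X$ of index-labeled contractions $T_\pi$, and the work is to bound each $T_\pi$ sharply in terms of $\sigma_*(X)$ and $s:=\|\mathbb{E}(X^{*}X)\|^{1/2}+\|\mathbb{E}(XX^{*})\|^{1/2}$.

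The natural dichotomy is between non-crossing and crossing pairings. For non-crossing $\pi$, a Haagerup-type planar collapse bounds $T_\pi$ by $d\cdot C_p\cdot s^{2p}$, where $C_p$ is the Catalan number; these pairings alone reproduce the free-probabilistic leading order. For a crossing pairing with ``defect'' $c$ --- the minimal number of rank-one insertions needed to planarize it --- the target bound is
\[T_\pi \;\lesssim\; d\cdot K(p,c)\cdot s^{2(p-c)}\,\sigma_*(X)^{2c},\]
with $K(p,c)$ a combinatorial factor subexponential in $p$. Summing over pairings of each defect and extracting the $2p$-th root should then yield $\mathbb{E}\|X\| \lesssim s + \sqrt{p}\,\sigma_*(X)$, and $p\asymp\log d$ closes the conjecture; the $\sqrt{\log d}$ emerges from the $(2p)^{c}$-type enumeration of defect-$c$ pairings taken to the $1/(2p)$ power and summed geometrically over~$c$.

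The main obstacle is precisely the displayed bound on a crossing's contribution, i.e., replacing $\|\mathbb{E}(X\otimes X)\|^{1/2}$ (which is what Cauchy--Schwarz or the technique behind Theorem \ref{main1} easily delivers) by $\sigma_*(X)$. A crossing, when expanded inside the trace, corresponds to contracting the covariance tensor against an \emph{unrestricted} element of $M_d(\mathbb{R})$, which is why the Frobenius-norm quantity appears in the naive analysis. To gain the rank-one restriction one would like to reinterpret a crossing as inserting a single pair of dual vectors into the trace cycle, so that only bilinear forms $\mathbb{E}\langle Xv,w\rangle^{2}$ appear --- precisely the ones controlled by $\sigma_*(X)^{2}$. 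The most promising route is an induction on the genus of $\pi$: cut a crossing along a rank-one Schatten decomposition of the relevant covariance slice, paying a factor of $\sigma_*(X)$ per cut while reducing the genus by one and leaving behind a product of traces that the inductive hypothesis handles.

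A complementary strategy, which I would pursue in parallel, is a chaining approach on the product sphere $S^{d-1}\times S^{d-1}$ equipped with the intrinsic Gaussian metric $\rho((v,w),(v',w')) = (\mathbb{E}(\langle Xv,w\rangle - \langle Xv',w'\rangle)^{2})^{1/2}$, whose diameter is controlled by $\sigma_*(X)$ but whose entropy inherits the anisotropy encoded in $\mathbb{E}(X^{*}X)$ and $\mathbb{E}(XX^{*})$. Talagrand's majorizing measure theorem then reduces the conjecture to constructing admissible nets of the product sphere whose $\gamma_2$-functional is dominated by the right-hand side; the difficulty migrates to producing partitions that are simultaneously fine in the trace-norm geometry (responsible for the $s$ term) and coarse enough that the logarithmic multiplier of $\sigma_*(X)$ suffices at every scale.
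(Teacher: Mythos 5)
The statement you are addressing is Conjecture~\ref{conj:NCK-improvement}, which the paper explicitly leaves open: the authors state that they were unable even to replace the $d^{\epsilon}$ factor in Theorem~\ref{main1} by $\sqrt{\log d}$, let alone to substitute the weak variance $\sigma_{*}(X)$ for the larger quantity $\|\mathbb{E}(X\otimes X)\|^{\frac{1}{2}}$. Your text is accordingly not a proof but a research plan, and its pivotal step is exactly the open problem restated as a ``target bound'': the inequality $T_{\pi}\lesssim d\cdot K(p,c)\cdot s^{2(p-c)}\sigma_{*}(X)^{2c}$ for a crossing pairing of defect $c$. You acknowledge this yourself (``The main obstacle is precisely the displayed bound on a crossing's contribution''), and the routes you propose toward it --- a genus induction via ``rank-one Schatten cuts,'' or a majorizing-measure construction on $S^{d-1}\times S^{d-1}$ --- are described only at the level of intention. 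The paper's own machinery (Lemmas~\ref{tracecross} and~\ref{tracecross2}) shows why the naive version fails: resolving a single crossing by Cauchy--Schwarz and Lemma~\ref{orthtr} forces a sum over a full orthonormal basis of $M_{d}(\mathbb{R})$, which is precisely what produces the Frobenius-type quantity $\max_{k}\|A_{k}\|_{F}$ (i.e.\ $\|\mathbb{E}(X\otimes X)\|^{\frac{1}{2}}$) rather than $\sigma_{*}(X)$; no mechanism in your sketch replaces that basis expansion by a rank-one test.

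There is a second, independent gap in the bookkeeping even if one grants the crossing estimate. In the paper's recursion (Lemma~\ref{recursion}), organizing the crossing pairings costs a factor $q\leq 4^{p}p^{2}$ per removed crossing, and iterating accumulates $8^{p}$ per step, which is why $p\asymp\epsilon\log d$ yields only $d^{\epsilon}$ and not $\sqrt{\log d}$. Your claim that the $\sqrt{\log d}$ ``emerges from the $(2p)^{c}$-type enumeration\dots taken to the $1/(2p)$ power'' does not check out as stated: summing $K(p,c)\,s^{2(p-c)}\sigma_{*}^{2c}$ with $K(p,c)\sim C_{p}\,p^{2c}$ gives $(s^{2}+p^{2}\sigma_{*}^{2})^{p}$ and hence $s+p\,\sigma_{*}\sim s+(\log d)\,\sigma_{*}$ after the $2p$-th root; obtaining $\sqrt{p}\,\sigma_{*}$ requires the defect-$c$ count to grow only like $p^{c}$, which you neither prove nor make plausible. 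Both the analytic core (replacing the Frobenius contraction by a rank-one one) and the combinatorial core (a genuinely subexponential-in-$p$, polynomial-in-defect enumeration) are missing, so the conjecture remains open under your proposal.
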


We note that $\sigma_{*}(X)\leq\|\mathbb{E}(X\otimes X)\|^{\frac{1}{2}}$, since
\[\|\mathbb{E}(X\otimes X)\|=\sup_{\substack{B\in M_{d}(\mathbb{R})\\\|B\|_{F}\leq 1}}\mathbb{E}\langle X,B\rangle^{2}=\sup_{\substack{B\in M_{d}(\mathbb{R})\\\|B\|_{F}\leq 1}}\mathbb{E}[\mathrm{Tr}(XB^{*})]^{2}.\]
Also, the Cauchy-Schwarz inequality implies that $\sigma_{*}(X)\leq\|\mathbb{E}(X^{*}X)\|^{\frac{1}{2}}$.

Conjecture~\ref{conj:NCK-improvement} has been verified in the case of independent entries~\cite{BandeiravanHandel}. It is worth mentioning that when the matrix is very inhomogenous even the term $\sqrt{\log d}\,\sigma_\ast(X)$ may not be necessary~\cite{
Ramon-inhomogeneous2}. There are two ways in which Theorem~\ref{main1} is weaker than Conjecture~\ref{conj:NCK-improvement}: (i) the dimensional factor is $d^\epsilon$ as opposed to $\sqrt{\log d}$; in the examples to be described, this limits the regimes in which our inequality is sharp; and (ii) the quantity $\|\mathbb{E}(X\otimes X)\|$ can in general be larger than $\sigma_{*}(X)$; it is worth mentioning however that the quantity $\sigma_{*}(X)$ in Conjecture~\ref{conj:NCK-improvement} appears to be difficult to compute, whereas $\|\mathbb{E}(X\otimes X)\|$ can be viewed as an easily computable (sometimes sharp) upper bound, at least in several cases in the next section. In Remark~\ref{remark:sigmaastvssigmaF} we highlight an interesting regime in which these two quantities are different and Conjecture~\ref{conj:NCK-improvement} would imply a stronger result.



\subsection*{Notation} 
Throughout this paper, if $T$ is a matrix or a linear transformation on an inner product space, $\|T\|$ denotes the spectral norm of $T$. The trace and the Frobenius norm of $T$ are denoted by $\mathrm{Tr}\,T$ and $\|T\|_{F}=\sqrt{\mathrm{Tr}(T^{*}T)}$, respectively. For $a,b>0$, we write $a\lesssim b$ when $a\leq Cb$ for some universal constant $C>0$; we write $a\lesssim_{\epsilon}b$ when $a\leq C_{\epsilon}b$ for some constant $C_{\epsilon}>0$ that depends only on $\epsilon$; we write $a\sim b$ when $a\lesssim b$ and $b\lesssim a$; we write $a\sim_{\epsilon}b$ when $a\lesssim_{\epsilon}b$ and $b\lesssim_{\epsilon}a$. For $n\in\mathbb{N}$, $[n]=\{1,\ldots,n\}$. For $d\in\mathbb{N}$, $(e_{1},\ldots,e_{d})$ is the canonical basis for $\mathbb{R}^{d}$.

\section{Guiding Examples and Applications}
\subsection{Gaussian on a subspace}

Consider the inner product space $M_{d}(\mathbb{R})$ of $d\times d$ real matrices with $\langle A,B\rangle=\mathrm{Tr}(AB^{*})$. Suppose that $\mathcal{M}$ is a subspace of $M_{d}(\mathbb{R})$ and $X$ is a standard Gaussian on $\mathcal{M}$, i.e., $X=\sum_{k=1}^{\mathrm{dim}\,\mathcal{M}}g_{k}A_{k}$, where $g_{1},\ldots,g_{\mathrm{dim}\,\mathcal{M}}$ are i.i.d.~standard Gaussian random variables and $(A_{1},\ldots,A_{\mathrm{dim}\,\mathcal{M}})$ is any orthonormal basis for $\mathcal{M}$. (The distribution of $X$ is independent of the choice of the orthonormal basis.) When $\mathrm{dim}\,\mathcal{M}=d$, the $\sqrt{\log d}$ factor in (\ref{Introeq1}) cannot always be removed, e.g., when $\mathcal{M}$ is the subspace of diagonal matrices. When $\mathrm{dim}\,\mathcal{M}=d^{2}$, we have $\mathcal{M}=M_{d}(\mathbb{R})$ so all the $d^{2}$ entries of $X$ are i.i.d.~standard Gaussian and the $\sqrt{\log d}$ factor can be removed.

In this paper, we show that, for any $\epsilon>0$, when $\mathrm{dim}\,\mathcal{M}\geq d^{1+\epsilon}$, the $\sqrt{\log d}$ factor can still be removed. Thus, there is a ``phase transition'' where the $\sqrt{\log d}$ factor cannot always be removed for $\mathrm{dim}\,\mathcal{M}=d$, but can be removed for $\mathrm{dim}\,\mathcal{M}\geq d^{1+\epsilon}$. Intuitively, this is because when all matrices in $\mathcal{M}$ are self-adjoint, it is possible that all matrices in $\mathcal{M}$ commute if $\mathrm{dim}\,\mathcal{M}=d$, but it is impossible that all matrices in $\mathcal{M}$ commute when $\mathrm{dim}\,\mathcal{M}>d$. As $\mathrm{dim}\,\mathcal{M}$ gets larger, the matrices in $\mathcal{M}$ are ``more noncommuting."

\begin{corollary}\label{subspaceopnorm}
If $X$ is a standard Gaussian on a subspace $\mathcal{M}$ of $M_{d}(\mathbb{R})$ and $\mathrm{dim}\,\mathcal{M}\geq d^{1+\epsilon}$ with $\epsilon>0$, then
\[\mathbb{E}\|X\|\sim_{\epsilon}\|\mathbb{E}(X^{*}X)\|^{\frac{1}{2}}+\|\mathbb{E}(XX^{*})\|^{\frac{1}{2}}.\]
\end{corollary}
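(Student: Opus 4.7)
The plan is to apply Theorem~\ref{main2} to a standard Gaussian on $\mathcal{M}$ by using an orthonormal basis, and then absorb the $d^{\epsilon'}$ term into the ``strong'' terms $\|\mathbb{E}(X^*X)\|^{1/2}+\|\mathbb{E}(XX^*)\|^{1/2}$ using the dimensional lower bound $\dim\mathcal{M}\geq d^{1+\epsilon}$.

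First I would fix an orthonormal basis $(A_{1},\ldots,A_{n})$ of $\mathcal{M}$ with respect to the inner product $\langle A,B\rangle=\mathrm{Tr}(AB^{*})$, where $n=\dim\mathcal{M}$. By definition, $X=\sum_{k=1}^{n}g_{k}A_{k}$, and orthonormality yields both $\mathrm{Tr}(A_{k_{1}}A_{k_{2}}^{*})=0$ for $k_{1}\neq k_{2}$ (the hypothesis of Theorem~\ref{main2}) and $\|A_{k}\|_{F}=1$ for every $k\in[n]$. Thus, for any $\epsilon'>0$, Theorem~\ref{main2} gives
\[\mathbb{E}\|X\|\lesssim_{\epsilon'}\|\mathbb{E}(X^{*}X)\|^{\frac{1}{2}}+\|\mathbb{E}(XX^{*})\|^{\frac{1}{2}}+d^{\epsilon'}.\]

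Next I would show the third term is dominated by the first two when $n\geq d^{1+\epsilon}$. Since $\mathbb{E}(X^{*}X)=\sum_{k=1}^{n}A_{k}^{*}A_{k}$, we have
\[\mathrm{Tr}\,\mathbb{E}(X^{*}X)=\sum_{k=1}^{n}\|A_{k}\|_{F}^{2}=n\geq d^{1+\epsilon},\]
and since $\mathbb{E}(X^{*}X)$ is a $d\times d$ positive semidefinite matrix, $\|\mathbb{E}(X^{*}X)\|\geq\frac{1}{d}\mathrm{Tr}\,\mathbb{E}(X^{*}X)\geq d^{\epsilon}$. Choosing $\epsilon'=\epsilon/2$ therefore gives
\[d^{\epsilon'}=d^{\epsilon/2}\leq\|\mathbb{E}(X^{*}X)\|^{\frac{1}{2}},\]
so the $d^{\epsilon'}$ term is absorbed, yielding the upper bound $\mathbb{E}\|X\|\lesssim_{\epsilon}\|\mathbb{E}(X^{*}X)\|^{\frac{1}{2}}+\|\mathbb{E}(XX^{*})\|^{\frac{1}{2}}$.

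The matching lower bound is the standard lower half of the noncommutative Khintchine inequality \eqref{Introeq1}, which holds with no dimensional factor and without any assumption on $\dim\mathcal{M}$. Combining the two bounds gives the stated equivalence $\mathbb{E}\|X\|\sim_{\epsilon}\|\mathbb{E}(X^{*}X)\|^{\frac{1}{2}}+\|\mathbb{E}(XX^{*})\|^{\frac{1}{2}}$. There is no real obstacle here: all the nontrivial work is packed into Theorem~\ref{main2}, and the only genuine step is the trace computation that turns the assumption $\dim\mathcal{M}\geq d^{1+\epsilon}$ into the dimensional lower bound on $\|\mathbb{E}(X^{*}X)\|$ needed to swallow $d^{\epsilon'}$.
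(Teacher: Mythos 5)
Your proposal is correct and follows essentially the same route as the paper: the paper applies Theorem \ref{main1} directly, using that $\mathbb{E}(X\otimes X)$ is the orthogonal projection onto $\mathcal{M}$ (so $\|\mathbb{E}(X\otimes X)\|=1$), and then makes the identical trace computation $\|\mathbb{E}(X^{*}X)\|\geq\frac{1}{d}\mathrm{Tr}\,\mathbb{E}(X^{*}X)=\frac{1}{d}\dim\mathcal{M}\geq d^{\epsilon}$ to absorb the $d^{\epsilon/2}$ term. Your use of Theorem \ref{main2} with an orthonormal basis is just the equivalent reformulation the paper itself points out, so there is no substantive difference.
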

\begin{proof}
Since $X$ is a standard Gaussian on $\mathcal{M}$, the expected Frobenius norm $\mathbb{E}\|X\|_{F}^{2}=\mathrm{dim}\,\mathcal{M}$ and the covariance $\mathbb{E}(X\otimes X)$ is the orthogonal projection from $M_{d}(\mathbb{R})$ onto $\mathcal{M}$. So the spectral norm $\|\mathbb{E}(X\otimes X)\|=1$. So $\|\mathbb{E}(X^{*}X)\|\geq\frac{1}{d}\mathbb{E}\mathrm{Tr}(X^{*}X)=\frac{1}{d}\mathbb{E}\|X\|_{F}^{2}=\frac{1}{d}\mathrm{dim}\,\mathcal{M}\geq d^{\epsilon}$. Thus, $d^{\frac{\epsilon}{2}}\|\mathbb{E}(X\otimes X)\|^{\frac{1}{2}}\leq\|\mathbb{E}(X^{*}X)\|^{\frac{1}{2}}$. The result follows from Theorem \ref{main1}.
\end{proof}

We expect the sharp condition to be $\mathrm{dim}\,\mathcal{M}\gtrsim d\log d$, but were not able to prove it. 

\subsection{Independent blocks}
In Theorem \ref{main2}, if we let $A_{1},\ldots,A_{d^{2}}$ be $A_{i,j}=b_{i,j}e_{i}e_{j}^{T}\in M_{d}(\mathbb{R})$ for $i,j\in[d]$, where $b_{i,j}>0$ for $i,j\in[d]$, then the second statement of Theorem \ref{main2} gives
\[\mathbb{E}\left\|\sum_{i,j\in[n]}g_{i,j}b_{i,j}e_{i}e_{j}^{T}\right\|\lesssim\max_{j\in[d]}\left(\sum_{i=1}^{d}|b_{i,j}|^{2}\right)^{\frac{1}{2}}+\max_{i\in[d]}\left(\sum_{j=1}^{d}|b_{i,j}|^{2}\right)^{\frac{1}{2}}+(\log d)^{2}\max_{i,j\in[n]}|b_{i,j}|,\]
where $(g_{i,j})_{i,j\in[d]}$ are i.i.d.~standard Gaussian random variables. This recovers a weaker version of a result by the first author and van Handel \cite{BandeiravanHandel}, who prove the estimate with the $(\log d)^{2}$ factor being replaced by $\sqrt{\log d}$, which is in fact, the optimal factor.

A block version of this example better iluminates the difference between the weak variance and the quantity our inequality uses. We note this is different from the model of random lifts of graphs~\cite{Oliveira-lifts,Bordenave-Collins,BandeiraDing}.

\begin{corollary}\label{block}
For each $i,j\in[d]$, let $B_{i,j}$ be an $r\times r$ matrix and $g_{i,j}$ be independent standard Gaussian random variables. Consider the following $dr\times dr$ matrix
\[X=\begin{bmatrix}g_{1,1}B_{1,1}&\ldots&g_{1,d}B_{1,d}\\\vdots&\ddots&\vdots\\g_{d,1}B_{d,1}&\ldots&g_{d,d}B_{d,d}\end{bmatrix},\]
and $\displaystyle\gamma=\max_{j\in[d]}\left\|\sum_{i=1}^{d}B_{i,j}^{*}B_{i,j}\right\|^{\frac{1}{2}}+\max_{i\in[d]}\left\|\sum_{j=1}^{d}B_{i,j}B_{i,j}^{*}\right\|^{\frac{1}{2}}$. Then
\[\gamma\lesssim\mathbb{E}\|X\|\lesssim\gamma+(dr)^{\epsilon}\max_{i,j\in[d]}\|B_{i,j}\|_{F}.\]
If moreover, all entries of every $B_{i,j}$ are nonnegative, then
\[\gamma\lesssim\mathbb{E}\|X\|\lesssim\gamma+(\log(dr))^{2}\max_{i,j\in[d]}\|B_{i,j}\|_{F}.\]
\end{corollary}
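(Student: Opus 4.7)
The strategy is to reduce the upper bound to Theorem~\ref{main2}, applied to an explicit orthogonal family of matrices, and to obtain the matching lower bound from the left half of the noncommutative Khintchine inequality~(\ref{Introeq1}). Write $X=\sum_{i,j\in[d]}g_{i,j}A_{i,j}$, where $A_{i,j}\in M_{dr}(\mathbb{R})$ denotes the block matrix whose $(i,j)$ block equals $B_{i,j}$ and whose remaining blocks are zero. This is the natural ``block-indicator'' representation that turns the original $X$ into a Gaussian series indexed by $d^{2}$ matrices.

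The first step is to check the orthogonality hypothesis of Theorem~\ref{main2}. A short block computation shows that $A_{i_{1},j_{1}}A_{i_{2},j_{2}}^{*}$ vanishes unless $j_{1}=j_{2}$, in which case it is supported in the $(i_{1},i_{2})$ block and equals $B_{i_{1},j_{1}}B_{i_{2},j_{1}}^{*}$ there; taking the trace gives $\mathrm{Tr}(A_{i_{1},j_{1}}A_{i_{2},j_{2}}^{*})=0$ whenever $(i_{1},j_{1})\neq(i_{2},j_{2})$. Next, I would compute the three quantities on the right hand side of Theorem~\ref{main2}. The matrix $\sum_{i,j}A_{i,j}^{*}A_{i,j}$ is block-diagonal with $(j,j)$ block equal to $\sum_{i}B_{i,j}^{*}B_{i,j}$, so its spectral norm equals $\max_{j}\|\sum_{i}B_{i,j}^{*}B_{i,j}\|$. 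Symmetrically, $\|\sum_{i,j}A_{i,j}A_{i,j}^{*}\|=\max_{i}\|\sum_{j}B_{i,j}B_{i,j}^{*}\|$. Finally, $\|A_{i,j}\|_{F}=\|B_{i,j}\|_{F}$, since the nonzero entries of $A_{i,j}$ coincide with those of $B_{i,j}$.

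Plugging these three quantities into the two statements of Theorem~\ref{main2} yields the upper bound with the $(dr)^{\epsilon}$ factor in the general case and, in the nonnegative case (where the $A_{i,j}$ inherit entrywise nonnegativity from the $B_{i,j}$), with the $(\log(dr))^{2}$ factor. For the matching lower bound $\gamma\lesssim\mathbb{E}\|X\|$ I would invoke the left inequality in~(\ref{Introeq1}): combining the identities $\mathbb{E}(X^{*}X)=\sum_{i,j}A_{i,j}^{*}A_{i,j}$ and $\mathbb{E}(XX^{*})=\sum_{i,j}A_{i,j}A_{i,j}^{*}$ with the block-diagonal computation above gives $\|\mathbb{E}(X^{*}X)\|^{\frac{1}{2}}+\|\mathbb{E}(XX^{*})\|^{\frac{1}{2}}=\gamma$. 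Because the argument is essentially bookkeeping once Theorem~\ref{main2} is available, there is no serious obstacle; the only mildly delicate points are verifying the orthogonality relation and identifying the two block-diagonal sums with the summands defining $\gamma$.
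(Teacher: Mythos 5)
Your proposal is correct and follows exactly the paper's route: the paper's proof is the one-line observation that Corollary~\ref{block} follows from Theorem~\ref{main2} applied to the block matrices $A_{i,j}$ with $(i,j)$ block $B_{i,j}$ and zeros elsewhere, and your verification of the orthogonality relation, the two block-diagonal sums, the Frobenius norms, and the lower bound via the left half of~(\ref{Introeq1}) just makes explicit the bookkeeping the paper leaves implicit.
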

\begin{proof}
This follows from Theorem \ref{main2} by taking $A_{1},\ldots,A_{d^{2}}\in M_{dr}(\mathbb{R})$ to be $A_{i,j}\in M_{dr}(\mathbb{R})$ being the matrix with the $(i,j)$-block being $B_{i,j}$ and the other blocks being $0$, where $i,j\in[d]$.
\end{proof}

\begin{remark}\label{remark:sigmaastvssigmaF}
We note that if Conjecture~\ref{conj:NCK-improvement} is true, then
\[\mathbb{E}\|X\|\lesssim\gamma+\sqrt{\log(dr)}\max_{i,j\in[d]}\|B_{i,j}\|,\]
where $\|B_{i,j}\|_{F}$ is replaced by $\|B_{i,j}\|$.
\end{remark}

\subsection{Indpendent rows}
\begin{corollary}\label{indeprow}
Suppose that $X$ is a $d_{1}\times d_{2}$ random matrix with independent rows and for $i\in[d_{1}]$, the $i$th row of $X$ is a centered Gaussian random vector with covariance matrix $B_{i}\in M_{d_{2}}(\mathbb{R})$. Then
\[\left\|\sum_{i=1}^{d_{1}}B_{i}\right\|^{\frac{1}{2}}+\max_{i\in[d_{1}]}[\mathrm{Tr}(B_{i})]^{\frac{1}{2}}\lesssim\mathbb{E}\|X\|\lesssim_{\epsilon}\left\|\sum_{i=1}^{d_{1}}B_{i}\right\|^{\frac{1}{2}}+\max_{i\in[d_{1}]}[\mathrm{Tr}(B_{i})]^{\frac{1}{2}}+\max(d_{1}^{\epsilon},d_{2}^{\epsilon})\max_{i\in[d_{1}]}\|B_{i}\|^{\frac{1}{2}},\]
for all $\epsilon>0$.
\end{corollary}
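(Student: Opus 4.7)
The strategy is to realize $X$ as an orthogonal Gaussian series and apply Theorem \ref{main2} (extended to rectangular matrices by embedding $X$ as an off-diagonal block, or equivalently padding to size $\max(d_{1},d_{2})$). For each $i\in[d_{1}]$ diagonalize $B_{i}=\sum_{k=1}^{d_{2}}\lambda_{i,k}u_{i,k}u_{i,k}^{T}$ with $(u_{i,k})_{k}$ orthonormal, and use rotational invariance of the Gaussian to write the $i$th row of $X$ as $\sum_{k}g_{i,k}\sqrt{\lambda_{i,k}}u_{i,k}^{T}$ for i.i.d.~standard Gaussian random variables $g_{i,k}$. This gives
\[X=\sum_{i,k}g_{i,k}A_{i,k},\qquad A_{i,k}=\sqrt{\lambda_{i,k}}\,e_{i}u_{i,k}^{T}.\]

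The orthogonality hypothesis of Theorem \ref{main2} is satisfied because $\mathrm{Tr}(A_{i,k}A_{i',k'}^{T})=\sqrt{\lambda_{i,k}\lambda_{i',k'}}\,\delta_{i,i'}\,(u_{i,k}^{T}u_{i',k'})=\lambda_{i,k}\,\delta_{i,i'}\delta_{k,k'}$; the second equality uses that $(u_{i,k})_{k}$ is orthonormal for each \emph{fixed} $i$. A direct computation then yields
\[\sum_{i,k}A_{i,k}^{T}A_{i,k}=\sum_{i}B_{i},\qquad \sum_{i,k}A_{i,k}A_{i,k}^{T}=\sum_{i}\mathrm{Tr}(B_{i})\,e_{i}e_{i}^{T},\qquad \|A_{i,k}\|_{F}=\sqrt{\lambda_{i,k}},\]
whose relevant norms are, respectively, $\|\sum_{i}B_{i}\|^{1/2}$, $\max_{i}\mathrm{Tr}(B_{i})^{1/2}$, and $\max_{i}\|B_{i}\|^{1/2}$, which are exactly the three quantities appearing in the claimed upper bound. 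The dimension factor $d^{\epsilon}$ in Theorem \ref{main2} becomes $\max(d_{1},d_{2})^{\epsilon}$ after embedding into a square matrix.

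For the matching lower bound, $\mathbb{E}\|X\|^{2}\ge\|\mathbb{E}(X^{T}X)\|=\|\sum_{i}B_{i}\|$ combined with Gaussian concentration (which controls the gap between $\mathbb{E}\|X\|$ and $\sqrt{\mathbb{E}\|X\|^{2}}$ by $O(\sigma_{*}(X))\le O(\max_{i}\|B_{i}\|^{1/2})$) gives $\mathbb{E}\|X\|\gtrsim\|\sum_{i}B_{i}\|^{1/2}$. The pointwise bound $\|X\|\ge\|r_{i}\|$ for each row $r_{i}\sim N(0,B_{i})$ yields $\mathbb{E}\|X\|\gtrsim\mathrm{Tr}(B_{i})^{1/2}$ by splitting into the cases $\mathrm{Tr}(B_{i})\gtrsim\|B_{i}\|$ (where Borell--Tsirelson concentration of $\|r_{i}\|$ around $\sqrt{\mathrm{Tr}(B_{i})}$ gives what we want) and $\mathrm{Tr}(B_{i})\lesssim\|B_{i}\|$ (in which case $\mathrm{Tr}(B_{i})^{1/2}\lesssim\|B_{i}\|^{1/2}\le\|\sum_{j}B_{j}\|^{1/2}$ is already absorbed by the first lower-bound term).

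There is no serious obstacle here: the proof is essentially a direct application of Theorem \ref{main2} once the spectral decomposition of each $B_{i}$ is in place. The only bookkeeping concern is the extension of Theorem \ref{main2} from square to rectangular matrices, which is standard via dilation/padding, at the cost of replacing $d$ by $\max(d_{1},d_{2})$ in the dimensional factor.
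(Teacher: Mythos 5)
Your proof is correct and follows essentially the same route as the paper: diagonalize each $B_{i}$, write $X=\sum_{i,k}g_{i,k}\sqrt{\lambda_{i,k}}\,e_{i}u_{i,k}^{T}$, check the pairwise trace-orthogonality, compute the three quantities, and apply Theorem \ref{main2} after padding to square matrices. The only cosmetic difference is in the lower bound, where the paper simply invokes the left-hand inequality of the noncommutative Khintchine bound \eqref{Introeq1} applied to the $A_{i,k}$, whereas you re-derive it via concentration and a row-norm argument; both yield the same estimate.
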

\begin{proof}
Write $X=\sum_{i=1}^{d_{1}}e_{i}x_{i}^{T}$ where each $x_{i}$ is a centered Gaussian random vector with covariance matrix $B_{i}\in M_{d_{2}}(\mathbb{R})$, and $x_{1},\ldots,x_{d_{1}}$ are independent. Thus, each $x_{i}$ can be written as $x_{i}=\sum_{j=1}^{d_{2}}g_{i,j}\sqrt{\lambda_{i,j}}v_{i,j}^{T}$, where $\lambda_{i,1},\ldots,\lambda_{i,d_{2}}$ are the eigenvalues of $B_{i}$ and $(v_{i,1},\ldots,v_{i,d_{2}})$ is an orthonormal basis for $\mathbb{R}^{d_{2}}$ consisting of the corresponding eigenvectors. Moreover, the $(g_{i,j})_{i\in[d_{1}],j\in[d_{2}]}$ are i.i.d.~standard Gaussian random variables.

We have $X=\sum_{i\in[d_{1}],j\in[d_{2}]}g_{i,j}\sqrt{\lambda_{i,j}}e_{i}v_{i,j}^{T}$. Let $A_{i,j}=\sqrt{\lambda_{i,j}}e_{i}v_{i,j}^{T}$ for $i\in[d_{1}],\,j\in[d_{2}]$. Note that $\mathrm{Tr}(A_{i_{1},j_{1}}A_{i_{2},j_{2}}^{*})=0$ whenever $(i_{1},j_{1})\neq(i_{2},j_{2})$. Thus, $X=\sum_{i\in[d_{1}],j\in[d_{2}]}g_{i,j}A_{i,j}$. Since
\[\sum_{i\in[d_{1}],j\in[d_{2}]}A_{i,j}^{*}A_{i,j}=\sum_{i\in[d_{1}],j\in[d_{2}]}\lambda_{i,j}v_{i,j}v_{i,j}^{T}=\sum_{i=1}^{d_{1}}B_{i},\]
\[\sum_{i\in[d_{1}],j\in[d_{2}]}A_{i,j}A_{i,j}^{*}=\sum_{i\in[d_{1}],j\in[d_{2}]}\lambda_{i,j}e_{i}e_{i}^{T}=\sum_{i=1}^{d_{1}}\left(\sum_{j=1}^{d_{2}}\lambda_{i,j}\right)e_{i}e_{i}^{T}=
\sum_{i=1}^{d_{1}}(\mathrm{Tr}\,B_{i})e_{i}e_{i}^{T},\]
\[\max_{i\in[d_{1}],j\in[d_{2}]}\|A_{i,j}\|_{F}=\max_{i\in[d_{1}],j\in[d_{2}]}\sqrt{\lambda_{i,j}}=\max_{i\in[d_{1}]}\|B_{i}\|,\]
by Theorem \ref{main2} and adding some zero rows/columns to each $A_{i,j}$ so that they become square matrices, the right hand side of the result follows. The left hand side is simply $\|\sum_{i\in[d_{1}],j\in[d_{2}]}A_{i,j}^{*}A_{i,j}\|^{\frac{1}{2}}+\|\sum_{i\in[d_{1}],j\in[d_{2}]}A_{i,j}A_{i,j}^{*}\|^{\frac{1}{2}}$.
\end{proof}
\begin{remark}
In Corollary \ref{indeprow}, if $\mathrm{Tr}(B_{i})\geq\max(d_{1}^{\epsilon},d_{2}^{\epsilon})\|B_{i}\|$ for all $i\in[d_{1}]$, or if each $B_{i}$ appears in $B_{1},\ldots,B_{d_{1}}$ at least $\max(d_{1}^{\epsilon},d_{2}^{\epsilon})$ times, then we obtain
\[\mathbb{E}\|X\|\sim_{\epsilon}\left\|\sum_{i=1}^{d_{1}}B_{i}\right\|^{\frac{1}{2}}+\max_{i\in[d_{1}]}[\mathrm{Tr}(B_{i})]^{\frac{1}{2}},\]
and so since $(\mathbb{E}\|X\|^{2})^{\frac{1}{2}}\lesssim\mathbb{E}\|X\|$ (by a Gaussian version of Kahane's inequality \cite{Kahane} or by concentration of $\|X\|$),
\[\mathbb{E}(\|X\|^{2})\sim_{\epsilon}\left\|\sum_{i=1}^{d_{1}}B_{i}\right\|+\max_{i\in[d_{1}]}\mathrm{Tr}(B_{i}).\]
\end{remark}

\subsection{Sample covariance}
\begin{corollary}\label{samplecov}
Suppose that $\mu$ is a probability measure on $\{B\in M_{d_{2}}(\mathbb{R})|\,B\text{ is positive semidefinite}\}$. Let $z_{1},\ldots,z_{d_{1}}$ be i.i.d.~random vectors in $\mathbb{R}^{d_{2}}$ chosen according to $\int\mathcal{N}(0,B)\,d\mu(B)$, i.e., $\mathbb{P}(z_{1}\in\mathcal{S})=\int\mathbb{P}(B^{\frac{1}{2}}g\in\mathcal{S})\,d\mu(B)$ for all measurable $\mathcal{S}\subset\mathbb{R}^{d_{2}}$, where $g$ is a standard Gaussian on $\mathbb{R}^{d_{2}}$. Let $Y=\sum_{i=1}^{d_{1}}z_{i}z_{i}^{T}\in M_{d_{2}}(\mathbb{R})$. If $\mathrm{Tr}(B)\geq\max(d_{1}^{\epsilon},d_{2}^{\epsilon})\|B\|$ $\mu$-a.s., then
\[\mathbb{E}\|Y\|\sim_{\epsilon}d_{1}\left\|\int B\,d\mu(B)\right\|+\mathbb{E}\max_{i\in[d_{1}]}\mathrm{Tr}(B_{i}),\]
where $B_{1},\ldots,B_{d_{1}}$ in $M_{d_{2}}(\mathbb{R})$ are i.i.d.~chosen according to $\mu$.
\end{corollary}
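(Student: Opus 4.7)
The plan is to reduce Corollary~\ref{samplecov} to the Remark after Corollary~\ref{indeprow} by conditioning on the random covariances $B_i$, and then to control the remaining sum of i.i.d.~PSD matrices by symmetrization plus noncommutative Khintchine.

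Write $z_i = B_i^{1/2} g_i$ with $g_i$ i.i.d.~$\mathcal{N}(0,I_{d_2})$ independent of $(B_i)$, so that $X := \sum_i e_i z_i^T$ is a $d_1\times d_2$ matrix which, conditional on $(B_i)$, is Gaussian with independent rows of covariances $B_1,\ldots,B_{d_1}$, and $Y = X^TX$. Because the hypothesis $\mathrm{Tr}(B_i)\geq\max(d_1^\epsilon,d_2^\epsilon)\|B_i\|$ holds $\mu$-a.s., the Remark after Corollary~\ref{indeprow} applied conditionally on $(B_i)$ gives
\[\mathbb{E}\bigl[\|X\|^2\bigm|B_1,\ldots,B_{d_1}\bigr]\sim_\epsilon\Bigl\|\sum_i B_i\Bigr\|+\max_i\mathrm{Tr}(B_i)\]
$\mu$-a.s.; taking expectations yields $\mathbb{E}\|Y\|=\mathbb{E}\|X\|^2\sim_\epsilon\mathbb{E}\|\sum_i B_i\|+\mathbb{E}\max_i\mathrm{Tr}(B_i)$.

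For the upper bound it remains to show $A := \mathbb{E}\|\sum_i B_i\|\lesssim_\epsilon d_1\|\bar B\|+\mathbb{E}\max_i\mathrm{Tr}(B_i)$, with $\bar B=\int B\,d\mu$. Combining triangle inequality, symmetrization, and noncommutative Khintchine (applied conditionally on $(B_i)$ with Rademacher signs $\epsilon_i$),
\[A\leq d_1\|\bar B\|+2\mathbb{E}\Bigl\|\sum_i\epsilon_i B_i\Bigr\|\lesssim d_1\|\bar B\|+\sqrt{\log d_2}\,\mathbb{E}\Bigl\|\sum_i B_i^2\Bigr\|^{1/2}.\]
Using $B_i^2\preceq\|B_i\|B_i$ followed by Cauchy--Schwarz gives $\mathbb{E}\|\sum_i B_i^2\|^{1/2}\leq(\mathbb{E}\max_i\|B_i\|)^{1/2}A^{1/2}$, and the resulting quadratic inequality in $A$ yields $A\lesssim d_1\|\bar B\|+\log d_2\cdot\mathbb{E}\max_i\|B_i\|$. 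The hypothesis forces $\|B_i\|\leq d_2^{-\epsilon}\mathrm{Tr}(B_i)$, and since $\log d_2\lesssim_\epsilon d_2^\epsilon$, the second term is $\lesssim_\epsilon\mathbb{E}\max_i\mathrm{Tr}(B_i)$, giving the upper bound.

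For the matching lower bound, Jensen's inequality gives $\mathbb{E}\|Y\|\geq\|\mathbb{E} Y\|=d_1\|\bar B\|$, and since $Y\succeq z_iz_i^T$ for every $i$, also $\mathbb{E}\|Y\|\geq\mathbb{E}\max_i\|z_i\|^2$. Conditional on $B_i$, $\|z_i\|^2=g_i^TB_ig_i$ is a Gaussian quadratic form with mean $\mathrm{Tr}(B_i)$; the hypothesis yields $\|B_i\|_F\leq d^{-\epsilon/2}\mathrm{Tr}(B_i)$ and $\|B_i\|\leq d^{-\epsilon}\mathrm{Tr}(B_i)$ with $d=\max(d_1,d_2)$, so Hanson--Wright gives $\mathbb{P}(\|z_i\|^2\leq\mathrm{Tr}(B_i)/2\mid B_i)\leq 2e^{-cd^\epsilon}$. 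Applying this at the (random) $\mathrm{Tr}$-maximizing index shows $\mathbb{E}\max_i\|z_i\|^2\gtrsim\mathbb{E}\max_i\mathrm{Tr}(B_i)$, which combined with Jensen completes the proof. The main obstacle is the upper bound on $A$: noncommutative Khintchine inevitably produces a $\log d_2$ factor on $\mathbb{E}\max_i\|B_i\|$, and the entire argument hinges on the trace-versus-norm hypothesis, which is exactly what allows this logarithmic factor to be absorbed into an $\epsilon$-dependent constant.
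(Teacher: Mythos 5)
Your proposal is correct and follows essentially the same route as the paper: condition on the $B_i$, apply the two-sided estimate from the Remark after Corollary~\ref{indeprow}, and then replace $\mathbb{E}\|\sum_i B_i\|$ by $d_1\|\int B\,d\mu(B)\|$ using a matrix concentration bound whose $\log d_2$ factor is absorbed via the hypothesis $\mathrm{Tr}(B)\geq d_2^{\epsilon}\|B\|$. The only cosmetic difference is that you re-derive the bound $\mathbb{E}\|\sum_i B_i\|\lesssim\|\sum_i\mathbb{E}B_i\|+(\log d_2)\,\mathbb{E}\max_i\|B_i\|$ by symmetrization, noncommutative Khintchine, and a self-bounding quadratic inequality, whereas the paper simply cites \cite[Theorem 5.1(1)]{Troppelemappr} (your direct Hanson--Wright lower bound is likewise valid but redundant, since the conditional $\sim_{\epsilon}$ already supplies it).
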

\begin{proof}
By assumption, $z_{1},\ldots,z_{d_{1}}$ are chosen as follows: first, choose i.i.d.~$B_{1},\ldots,B_{d_{1}}$ in $M_{d_{2}}(\mathbb{R})$ according to $\mu$ and then for each $i\in[d_{1}]$, take $z_{i}=B_{i}^{\frac{1}{2}}g_{i}$, where $g_{1},\ldots,g_{d_{1}}$ are i.i.d. standard Gaussian random variables. Let $X$ be the $d_{1}\times d_{2}$ matrix with the $i$th row of $X$ being $z_{i}$ for every $i\in[d_{1}]$. Note that $Y=X^{*}X$. Since $\mathrm{Tr}(B)\geq\max(d_{1}^{\epsilon},d_{2}^{\epsilon})\|B\|$ $\mu$-a.s., by Corollary \ref{indeprow} and the remark after Corollary \ref{indeprow}, conditioning on $B_{1},\ldots,B_{d_{1}}$, we have
\[\mathbb{E}(\|X\|^{2}\,|\,B_{1},\ldots,B_{d_{1}})\sim_{\epsilon}\left\|\sum_{i=1}^{d_{1}}B_{i}\right\|+\max_{i\in[d_{1}]}[\mathrm{Tr}(B_{i})].\]
Thus, since $Y=X^{*}X$,
\[\mathbb{E}\|Y\|\sim_{\epsilon}\mathbb{E}\left\|\sum_{i=1}^{d_{1}}B_{i}\right\|+\mathbb{E}\max_{i\in[d_{1}]}\mathrm{Tr}(B_{i}).\]
By \cite[Theorem 5.1(1)]{Troppelemappr},
\[\mathbb{E}\left\|\sum_{i=1}^{d_{1}}B_{i}\right\|\lesssim\left\|\sum_{i=1}^{d_{1}}\mathbb{E}B_{i}\right\|+(\log d_{2})\mathbb{E}\max_{i\in[d_{1}]}\|B_{i}\|.\]
But by assumption, $\mathrm{Tr}(B)\geq d_{2}^{\epsilon}\|B\|$ $\mu$-a.s. Therefore,
\[\mathbb{E}\|Y\|\sim_{\epsilon}\left\|\sum_{i=1}^{d_{1}}\mathbb{E}B_{i}\right\|+\mathbb{E}\max_{i\in[d_{1}]}\mathrm{Tr}(B_{i}).\]
Since $\displaystyle\mathbb{E}B_{i}=\int B\,d\mu(B)$ for all $i\in[d_{1}]$, the result follows.
\end{proof}
\begin{remark}
If the assumption $\mathrm{Tr}(B)\geq \max(d_{1}^{\epsilon},d_{2}^{\epsilon})\|B\|$ $\mu$-a.s.~is removed, Corollary \ref{samplecov} may fail. For example, take $d_{1}=d_{2}$ and $\mu$ to be the uniform probability measure over the subset $\{e_{1}e_{1}^{T},\ldots,e_{d_{2}}e_{d_{2}}^{T}\}$ of $M_{d_{2}}(\mathbb{R})$. Then $d_{1}\left\|\int B\,d\mu(B)\right\|+\mathbb{E}\max_{i\in[d_{1}]}\mathrm{Tr}(B_{i})\sim 1$, while $\mathbb{E}\|Y\|\geq\mathbb{E}\max_{i\in[d_{1}]}\|z_{i}\|_{2}^{2}\sim\log d_{1}$.
\end{remark}

\subsection{Glued entries}
\begin{corollary}\label{gridpartition}
Suppose that $\{S_{1},\ldots,S_{n}\}$ is a partition of $[d]\times[d]$ such that $|S_{1}|=\ldots=|S_{n}|\leq\frac{d}{(\log d)^{4}}$. Let $g_{1},\ldots,g_{n}$ be i.i.d.~standard Gaussian random variables. Consider the random matrix $X$ in $M_{d}(\mathbb{R})$ defined by $X_{i,j}=g_{k}$ for all $(i,j)\in S_{k}$ and $k\in[n]$. Then
\[\mathbb{E}\|X\|\sim\left\|\sum_{k=1}^{n}A_{k}^{*}A_{k}\right\|^{\frac{1}{2}}+\left\|\sum_{k=1}^{n}A_{k}A_{k}^{*}\right\|^{\frac{1}{2}},\]
where for $k\in[n]$, the matrix $A_{k}\in M_{d}(\mathbb{R})$ is defined by $(A_{k})_{i,j}=\begin{cases}1,&(i,j)\in S_{k}\\0,&\text{Otherwise}\end{cases}$.
\end{corollary}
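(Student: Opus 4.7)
The plan is to apply the nonnegative-entry version of Theorem~\ref{main2} to the given matrices $A_{k}$, and then use a simple trace-versus-norm lower bound on $\|\sum_{k}A_{k}^{*}A_{k}\|$ to absorb the remaining $(\log d)^{2}\max_{k}\|A_{k}\|_{F}$ term into the two ``Khintchine'' quantities, using precisely the size hypothesis $|S_{k}|\leq d/(\log d)^{4}$.

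First I would check that the $A_{k}$ satisfy the hypotheses of Theorem~\ref{main2}. Each $A_{k}$ is a $0/1$ matrix, hence has nonnegative entries. For $k_{1}\neq k_{2}$,
\[
\mathrm{Tr}(A_{k_{1}}A_{k_{2}}^{*})=\sum_{(i,j)}(A_{k_{1}})_{i,j}(A_{k_{2}})_{i,j}=|S_{k_{1}}\cap S_{k_{2}}|=0,
\]
since $\{S_{k}\}$ is a partition. Moreover $X=\sum_{k=1}^{n}g_{k}A_{k}$ by construction. The nonnegative-entry statement of Theorem~\ref{main2} then yields
\[
\mathbb{E}\|X\|\lesssim\left\|\sum_{k=1}^{n}A_{k}^{*}A_{k}\right\|^{\frac{1}{2}}+\left\|\sum_{k=1}^{n}A_{k}A_{k}^{*}\right\|^{\frac{1}{2}}+(\log d)^{2}\max_{k\in[n]}\|A_{k}\|_{F}.
\]
The matching lower bound $\mathbb{E}\|X\|\gtrsim\|\sum_{k}A_{k}^{*}A_{k}\|^{1/2}+\|\sum_{k}A_{k}A_{k}^{*}\|^{1/2}$ is immediate from the left-hand side of the noncommutative Khintchine inequality~(\ref{Introeq1}), since $\mathbb{E}(X^{*}X)=\sum_{k}A_{k}^{*}A_{k}$ and $\mathbb{E}(XX^{*})=\sum_{k}A_{k}A_{k}^{*}$.

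Thus the only real step is to dispose of the error term $(\log d)^{2}\max_{k}\|A_{k}\|_{F}$. Writing $s:=|S_{k}|$ (the common block size), we have $\|A_{k}\|_{F}=\sqrt{s}$ for every $k$, while
\[
\mathrm{Tr}\left(\sum_{k=1}^{n}A_{k}^{*}A_{k}\right)=\sum_{k=1}^{n}\|A_{k}\|_{F}^{2}=\sum_{k=1}^{n}|S_{k}|=d^{2}.
\]
Since $\sum_{k}A_{k}^{*}A_{k}$ is a $d\times d$ positive semidefinite matrix with trace $d^{2}$, its spectral norm is at least $d$, and likewise for $\sum_{k}A_{k}A_{k}^{*}$. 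Hence both $\|\sum_{k}A_{k}^{*}A_{k}\|^{1/2}$ and $\|\sum_{k}A_{k}A_{k}^{*}\|^{1/2}$ are at least $\sqrt{d}$. The size hypothesis then gives
\[
(\log d)^{2}\max_{k}\|A_{k}\|_{F}=(\log d)^{2}\sqrt{s}\leq(\log d)^{2}\sqrt{d/(\log d)^{4}}=\sqrt{d},
\]
so the error term is absorbed into the Khintchine terms, completing the upper bound.

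I do not expect any serious obstacle: modulo checking the orthogonality and nonnegativity hypotheses of Theorem~\ref{main2}, the entire content of the corollary is the elementary observation that the threshold $d/(\log d)^{4}$ in the hypothesis exactly matches the dimensional lower bound $\sqrt{d}$ on $\|\sum_{k}A_{k}^{*}A_{k}\|^{1/2}$ coming from the trace identity $\mathrm{Tr}(\sum_{k}A_{k}^{*}A_{k})=d^{2}$.
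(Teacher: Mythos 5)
Your proof is correct and follows essentially the same route as the paper's: apply the nonnegative-entry case of Theorem~\ref{main2}, then absorb the $(\log d)^{2}\max_{k}\|A_{k}\|_{F}$ term via the trace lower bound $\|\sum_{k}A_{k}^{*}A_{k}\|\geq\frac{1}{d}\mathrm{Tr}(\sum_{k}A_{k}^{*}A_{k})=d$ combined with the size hypothesis. The only difference is cosmetic: you make the matching lower bound from the left-hand side of (\ref{Introeq1}) explicit, which the paper leaves implicit.
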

\begin{proof}
Observe that $X=\sum_{k=1}^{n}g_{k}A_{k}$ and that $\mathrm{Tr}(A_{k_{1}}A_{k_{2}}^{*})=0$ for all $k_{1}\neq k_{2}$. Thus, by Theorem \ref{main2},
\[\mathbb{E}\|X\|\lesssim\left\|\sum_{k=1}^{n}A_{k}^{*}A_{k}\right\|^{\frac{1}{2}}+\left\|\sum_{k=1}^{n}A_{k}A_{k}^{*}\right\|^{\frac{1}{2}}+(\log d)^{2}\max_{k\in[n]}\|A_{k}\|_{F}.\]
Since $\|A_{k}\|_{F}^{2}=\mathrm{Tr}(A_{k}^{*}A_{k})=|S_{1}|$ for all $k\in[n]$,
\[\left\|\sum_{k=1}^{n}A_{k}^{*}A_{k}\right\|\geq\frac{1}{d}\mathrm{Tr}\left(\sum_{k=1}^{n}A_{k}^{*}A_{k}\right)=\frac{n}{d}|S_{1}|.\]
Thus, if $\sqrt{\frac{n}{d}}\geq(\log d)^{2}$, then $\|\sum_{k=1}^{n}A_{k}^{*}A_{k}\|^{\frac{1}{2}}\geq(\log d)^{2}\max_{k\in[n]}\|A_{k}\|_{F}$ and the result follows. To show that $\sqrt{\frac{n}{d}}\geq(\log d)^{2}$, note that $n|S_{1}|=\sum_{k=1}^{n}|S_{k}|=d^{2}$ so $\frac{n}{d}=\frac{d}{|S_{1}|}\geq(\log d)^{4}$ by assumption.
\end{proof}
\begin{remark}
When $|S_{1}|=1$, this result recovers the classical estimate for the spectral norm of a standard Gaussian matrix. When $|S_{1}|=d$, this result could fail. For example, take $S_{k}=\{(i,j)\in[d]\times[d]|\,i-j\equiv k\mod d\}$ for $k\in[d]$. Then $A_{k}=A_{1}^{k}$, for all $k\in[d]$, and $X=\sum_{k=1}^{d}g_{k}A_{k}$ is a random circulant matrix. We have $\|\sum_{k=1}^{d}g_{k}A_{k}\|=\|\sum_{k=1}^{d}g_{k}A_{1}^{k}\|=\sup_{w^{d}=1}|\sum_{k=1}^{d}g_{k}w^{k}|$ has expected value $\sim\sqrt{d\log d}$. On the other hand, since $A_{k}$ is a unitary for all $k\in[d]$, we have $\|\sum_{k=1}^{d}A_{k}^{*}A_{k}\|^{\frac{1}{2}}=\|\sum_{k=1}^{d}A_{k}A_{k}^{*}\|^{\frac{1}{2}}=\sqrt{d}$. Or if $X$ is a random self-adjoint Toeplitz matrix where in each row, the entries are i.i.d. standard Gaussian entries, then the $\sqrt{\log d}$ factor is also needed in this case, though $|S_{1}|,\ldots,|S_{d}|$ are all different~\cite{Meckes}.
\end{remark}

A particularly interesting case is when, for some $r>0$, the partition $\{S_{1},\ldots,S_{n}\}$ of $[d]\times[d]$ satisfies, for all $k\in[n]$,
\begin{enumerate}[(1)]
\item $|S_{k}|=r$;
\item $S_{k}$ has at most one entry in each row of $[d]\times[d]$;
\item $S_{k}$ has at most one entry in each column of $[d]\times[d]$.
\end{enumerate}
For $k\in[n]$, consider the matrix $A_{k}\in M_{d}(\mathbb{R})$ defined by $(A_{k})_{i,j}=\begin{cases}1,&(i,j)\in S_{k}\\0,&\text{Otherwise}\end{cases}$. We have
\[\left\|\sum_{k=1}^{n}A_{k}^{*}A_{k}\right\|^{\frac{1}{2}}=\left\|\sum_{k=1}^{n}A_{k}A_{k}^{*}\right\|^{\frac{1}{2}}=\sqrt{d}.\]
Indeed, $A_{k}^{*}A_{k}$ and $A_{k}A_{k}^{*}$ are diagonal matrices for all $k\in[n]$. For every $r\in[d]$, their $r$th diagonal entries are
\[\langle A_{k}^{*}A_{k}e_{r},e_{r}\rangle=\|A_{k}e_{r}\|_{2}^{2}=\begin{cases}1,&S_{k}\text{ has one entry in the }r\text{th column}\\0,&\text{Otherwise}\end{cases},\]
and
\[\langle A_{k}A_{k}^{*}e_{r},e_{r}\rangle=\|A_{k}^{*}e_{r}\|_{2}^{2}=\begin{cases}1,&S_{k}\text{ has one entry in the }r\text{th row}\\0,&\text{Otherwise}\end{cases}.\]
Since each row/column has $d$ entries and each entry belongs to exactly one $S_{k}$ (by assumption that $\{S_{1},\ldots,S_{n}\}$ is a partition), it follows that $\sum_{k=1}^{n}\langle A_{k}^{*}A_{k}e_{r},e_{r}\rangle=\sum_{k=1}^{n}\langle A_{k}A_{k}^{*}e_{r},e_{r}\rangle=d$ for every $r\in[d]$. So $\|\sum_{k=1}^{n}A_{k}^{*}A_{k}\|=\|\sum_{k=1}^{n}A_{k}A_{k}^{*}\|=d$.

In this case, if $r\leq\frac{d}{(\log d)^{4}}$, Corollary \ref{gridpartition} implies that
\begin{equation}\label{partialpermutationeq1}
\mathbb{E}\left\|\sum_{k=1}^{n}g_{k}A_{k}\right\|\sim\sqrt{d},
\end{equation}
where $g_{1},\ldots,g_{n}$ are i.i.d.~standard Gaussian random variables. Again, we expect this to hold for $r\leq\frac{d}{\log d}$ but were not able to prove it.

\section{Proof of the main theorem}
\subsection{Some estimations}
The first step is to prove Lemma \ref{tracecross2}, which is a result about real (random) matrices. However, it uses Lemma \ref{tropplemma}, which is over the complex, in an essential way. So the first two lemmas, which are needed to prove Lemma \ref{tracecross2}, involve both real and complex matrices. Let $M_{d}(\mathbb{C})$ be the space of all $d\times d$ complex matrices.
\begin{lemma}\label{orthtr}
If $\{B_{1},\ldots,B_{d^{2}}\}$ is an orthonormal basis for $M_{d}(\mathbb{R})$, i.e., $\mathrm{Tr}(B_{k_{1}}B_{k_{2}}^{*})=\begin{cases}1,&k_{1}=k_{2}\\0,&k_{1}\neq k_{2}\end{cases}$, then $\displaystyle\sum_{k=1}^{d^{2}}B_{k}^{*}LB_{k}=\mathrm{Tr}(L)I$ for all $L\in M_{d}(\mathbb{C})$.
\end{lemma}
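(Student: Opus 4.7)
The plan is to reduce the identity to a single canonical orthonormal basis by first showing that the expression $T(L) := \sum_{k=1}^{d^{2}} B_{k}^{*} L B_{k}$ is independent of the choice of orthonormal basis $\{B_{1},\ldots,B_{d^{2}}\}$ of $M_{d}(\mathbb{R})$, and then verifying the identity on the basis of matrix units.

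For the basis-independence step, I would take a second orthonormal basis $\{B_{1}',\ldots,B_{d^{2}}'\}$ of $M_{d}(\mathbb{R})$ and write $B_{k}' = \sum_{j} O_{kj} B_{j}$, where $O = (O_{kj})$ is an orthogonal $d^{2}\times d^{2}$ matrix (since change of orthonormal bases of a real inner product space is orthogonal). Substituting and using $\sum_{k} O_{ki} O_{kj} = \delta_{ij}$ gives
\[\sum_{k} (B_{k}')^{*} L B_{k}' \;=\; \sum_{i,j} \Bigl(\sum_{k} O_{ki} O_{kj}\Bigr) B_{i}^{*} L B_{j} \;=\; \sum_{i} B_{i}^{*} L B_{i},\]
so $T(L)$ depends only on $L$. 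Both sides are $\mathbb{C}$-linear in $L$, so there is no issue with $L$ being complex.

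For the verification step, I would use the canonical basis $\{E_{ij}\}_{i,j\in[d]}$, where $E_{ij}$ has a $1$ in position $(i,j)$ and zeroes elsewhere; this is orthonormal in $M_{d}(\mathbb{R})$. A direct entry-wise computation, using $E_{ij}^{*} = E_{ji}$, yields
\[E_{ij}^{*} L E_{ij} \;=\; E_{ji} L E_{ij} \;=\; L_{ii}\, E_{jj},\]
and summing over $i$ and $j$ gives $\sum_{i,j} L_{ii} E_{jj} = \mathrm{Tr}(L)\,I$, as required. By the basis-independence established above, this equality holds for every orthonormal basis.

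This lemma is a routine linear-algebra identity, so I do not anticipate any real obstacle; the only mild subtlety is ensuring that $L\in M_{d}(\mathbb{C})$ (rather than merely $M_{d}(\mathbb{R})$) is handled correctly, which is immediate by $\mathbb{C}$-linearity of both sides in $L$.
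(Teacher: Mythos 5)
Your proof is correct and follows essentially the same route as the paper: establish that $\sum_{k}B_{k}^{*}LB_{k}$ is independent of the orthonormal basis, then compute with the matrix units $e_{i}e_{j}^{T}$. The only cosmetic difference is that the paper derives basis-independence from the rotation-invariance of the standard Gaussian $\sum_{k}g_{k}B_{k}$, whereas you use the orthogonal change-of-basis matrix directly, which is an equally valid (and arguably more elementary) way of expressing the same fact.
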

\begin{proof}
Without loss of generality, $L\in M_{d}(\mathbb{R})$. Let $g_{1},\ldots,g_{d^{2}}$ be i.i.d.~standard Gaussian random variables. Since \[\sum_{k=1}^{d^{2}}B_{k}^{*}LB_{k}=\mathbb{E}\left(\sum_{k=1}^{d^{2}}g_{k}B_{k}\right)^{*}L\left(\sum_{k=1}^{d^{2}}g_{k}B_{k}\right)\]
and $\sum_{k=1}^{d^{2}}g_{k}B_{k}$ is independent of the choice of the orthonormal basis $\{B_{1},\ldots,B_{d^{2}}\}$, the matrix $\sum_{k=1}^{d^{2}}B_{k}^{*}LB_{k}$ is independent of the choice of the orthonormal basis $\{B_{1},\ldots,B_{d^{2}}\}$. We may take $\{B_{1},\ldots,B_{d^{2}}\}=\{e_{i}e_{j}^{T}|\,i,j\in[d]\}$. We have
\[\sum_{k=1}^{d^{2}}B_{k}^{*}LB_{k}=\sum_{i=1}^{d}\sum_{j=1}^{d}e_{i}e_{j}^{T}Le_{j}e_{i}^{T}=\left(\sum_{j=1}^{d}e_{j}^{T}Le_{j}\right)\sum_{i=1}^{d}e_{i}e_{i}^{T}=\mathrm{Tr}(L)I.\]
\end{proof}
\begin{lemma}\label{tracecross}
Suppose that $Q_{1},\ldots,Q_{5}\in M_{d}(\mathbb{C})$ are unitary, $Y\in M_{d}(\mathbb{R})$ is self-adjoint, $A_{1},\ldots,A_{n}\in M_{d}(\mathbb{R})$ are self-adjoint matrices and $\mathrm{Tr}(A_{k_{1}}A_{k_{2}})=0$ for all $k_{1}\neq k_{2}$ in $[n]$. Then
\[\left|\sum_{k_{1},k_{2}\in[n]}\mathrm{Tr}(Q_{1}Y^{2}Q_{2}A_{k_{1}}Q_{3}A_{k_{2}}Q_{4}A_{k_{1}}Q_{5}A_{k_{2}})\right|\leq\left(\max_{k\in[n]}\|A_{k}\|_{F}\right)^{2}\left\|\sum_{k=1}^{n}A_{k}^{2}\right\|\mathrm{Tr}(Y^{2}).\]
\end{lemma}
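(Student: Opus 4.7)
The plan is to reduce the inequality to an operator-norm estimate on a single matrix built from the $A_{k}$'s and $Q_{i}$'s, and then to exploit the Hilbert--Schmidt orthogonality of the $A_{k}$'s. First, by cyclicity of the trace,
\[
\sum_{k_{1},k_{2}}\mathrm{Tr}(Q_{1}Y^{2}Q_{2}A_{k_{1}}Q_{3}A_{k_{2}}Q_{4}A_{k_{1}}Q_{5}A_{k_{2}})=\mathrm{Tr}(Q_{1}Y^{2}Q_{2}C),
\]
where $C:=\sum_{k_{1},k_{2}}A_{k_{1}}Q_{3}A_{k_{2}}Q_{4}A_{k_{1}}Q_{5}A_{k_{2}}$. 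Since $Y^{2}$ is positive semidefinite, the standard bound $|\mathrm{Tr}(Y^{2}M)|\leq\mathrm{Tr}(Y^{2})\|M\|$ together with cyclicity gives $|T|\leq\mathrm{Tr}(Y^{2})\|C\|$, and it suffices to prove the operator-norm inequality
\[
\|C\|\leq\bigl(\max_{k}\|A_{k}\|_{F}\bigr)^{2}\|S\|,\qquad S:=\sum_{k}A_{k}^{2}.
\]

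Next I would decompose $C$ so as to apply noncommutative Cauchy--Schwarz. Write $C=\sum_{k_{2}}\Phi_{k_{2}}Q_{5}A_{k_{2}}$ with $\Phi_{k_{2}}:=\psi(Q_{3}A_{k_{2}}Q_{4})$, where $\psi(X):=\sum_{k}A_{k}XA_{k}$ is the completely positive map associated to the $A_{k}$'s, satisfying $\psi(I)=S$. Noncommutative Cauchy--Schwarz (viewing $C$ as a row--column product with blocks $\Phi_{k_{2}}Q_{5}$ and $A_{k_{2}}$) gives
\[
\|C\|^{2}\leq\Bigl\|\sum_{k_{2}}\Phi_{k_{2}}\Phi_{k_{2}}^{*}\Bigr\|\cdot\|S\|,
\]
reducing the task to the inner estimate $\|\sum_{k_{2}}\Phi_{k_{2}}\Phi_{k_{2}}^{*}\|\leq(\max_{k}\|A_{k}\|_{F})^{4}\|S\|$.

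This inner estimate is the main obstacle. Two basic ingredients are available. Lemma~\ref{orthtr}, applied by completing $\{A_{k}/\|A_{k}\|_{F}\}_{k=1}^{n}$ to an orthonormal basis of $M_{d}(\mathbb{R})$, yields
\[
\|\psi(L)\|\leq(\max_{k}\|A_{k}\|_{F})^{2}\mathrm{Tr}(L)\quad\text{for $L$ positive semidefinite},
\]
(indeed, $\sum_{k=1}^{n}A_{k}LA_{k}/\|A_{k}\|_{F}^{2}\leq\sum_{k=1}^{d^{2}}B_{k}^{*}LB_{k}=\mathrm{Tr}(L)I$). Kadison--Schwarz for the completely positive map $\psi$ yields $\psi(X)\psi(X)^{*}\leq\|S\|\psi(XX^{*})$. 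Combining these naively gives only
\[
\Bigl\|\sum_{k_{2}}\Phi_{k_{2}}\Phi_{k_{2}}^{*}\Bigr\|\leq\|S\|(\max_{k}\|A_{k}\|_{F})^{2}\mathrm{Tr}(S),
\]
which is too weak by a factor of up to $\mathrm{Tr}(S)/(\max_{k}\|A_{k}\|_{F})^{2}$ (possibly as large as $n$). Closing this gap is precisely where the complex-valued Lemma~\ref{tropplemma} enters essentially. The strategy is to expand $\sum_{k_{2}}\Phi_{k_{2}}\Phi_{k_{2}}^{*}$ as the quadruple sum
\[
\sum_{k_{1},k_{1}',k_{2}}A_{k_{1}}Q_{3}A_{k_{2}}Q_{4}A_{k_{1}}A_{k_{1}'}Q_{4}^{*}A_{k_{2}}Q_{3}^{*}A_{k_{1}'},
\]
and then to invoke Lemma~\ref{tropplemma} (playing the role of a noncommutative Wick/Khintchine-type identity over complex unitaries) to collapse the cross terms in $k_{1},k_{1}'$ via the Hilbert--Schmidt orthogonality $\mathrm{Tr}(A_{k_{1}}A_{k_{1}'})=\delta_{k_{1}k_{1}'}\|A_{k_{1}}\|_{F}^{2}$. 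This collapse should replace the stray $\mathrm{Tr}(S)$ factor by $(\max_{k}\|A_{k}\|_{F})^{2}$, yielding the sharp inner bound and hence, combined with the first two steps, the claim.
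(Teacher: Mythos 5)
Your first two reductions are fine: pulling out $Y^{2}$ to get $|T|\leq\mathrm{Tr}(Y^{2})\,\|C\|$ is legitimate (and in fact lossless, since taking $Y^{2}=vv^{T}$ and optimizing over $Q_{1},Q_{2}$ recovers $\|C\|$), and the row--column Cauchy--Schwarz over $k_{2}$ is correct. But the proof then stops exactly at the step that carries all the difficulty: the inner estimate $\bigl\|\sum_{k_{2}}\Phi_{k_{2}}\Phi_{k_{2}}^{*}\bigr\|\leq(\max_{k}\|A_{k}\|_{F})^{4}\|S\|$ is asserted as a "strategy," not proved, and the tool you propose for it cannot do the job. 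Lemma~\ref{tropplemma} is a decoupling statement about \emph{expectations of multilinear functions of random self-adjoint matrices}; it replaces random arguments by worst-case unitaries. It has no mechanism for "collapsing cross terms" in a deterministic quadruple sum via Hilbert--Schmidt orthogonality. (In the paper it is used only in Lemma~\ref{tracecross2}, to reduce a random-matrix trace to the deterministic unitary form of the present lemma --- it plays no role inside the proof of this lemma itself.) Your quadruple sum $\sum_{k_{1},k_{1}',k_{2}}A_{k_{1}}Q_{3}A_{k_{2}}Q_{4}A_{k_{1}}A_{k_{1}'}Q_{4}^{*}A_{k_{2}}Q_{3}^{*}A_{k_{1}'}$ has the repeated indices $k_{1},k_{1}'$ in the \emph{outer} positions, where neither $\mathrm{Tr}(A_{k_{1}}A_{k_{1}'})=0$ nor Lemma~\ref{orthtr} (which collapses sums of the form $\sum_{k}B_{k}^{*}LB_{k}$ with the basis elements in the \emph{middle}) can be brought to bear; your own Kadison--Schwarz computation shows the honest output of this route is the too-weak bound with $\mathrm{Tr}(S)$ in place of $\beta^{2}$.

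The paper closes this gap by ordering the two Cauchy--Schwarz steps differently: it does \emph{not} pull $Y^{2}$ out first. Instead it writes the sum as $\sum_{k_{1}}\mathrm{Tr}(UV)$ with $U=YQ_{2}A_{k_{1}}Q_{3}$ and $V=\sum_{k_{2}}A_{k_{2}}Q_{4}A_{k_{1}}Q_{5}A_{k_{2}}Q_{1}Y$, and applies trace Cauchy--Schwarz in the index $k_{1}$, keeping a copy of $Y$ in each factor. The first factor gives $\|S\|\,\mathrm{Tr}(Y^{2})$ directly. In the second factor the matrix $A_{k_{1}}$ now sits in the middle position; after relaxing $A_{k_{1}}=\lambda_{k_{1}}B_{k_{1}}$ and extending the $k_{1}$-sum to a full orthonormal basis of $M_{d}(\mathbb{R})$, Lemma~\ref{orthtr} collapses $\sum_{k_{1}}B_{k_{1}}^{*}LB_{k_{1}}=\mathrm{Tr}(L)I$ with $L=Q_{4}^{*}A_{k_{3}}A_{k_{2}}Q_{4}$, producing exactly the factor $\mathrm{Tr}(A_{k_{3}}A_{k_{2}})$ that the orthogonality hypothesis annihilates off the diagonal. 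That placement of the basis elements in the middle slot is the idea your decomposition is missing; without it (or a genuine proof of your inner estimate by some other means) the argument is incomplete.
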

\begin{proof}
Without loss of generality, assume that $A_{k}\neq 0$ for all $k\in[n]$. Let $\beta=\max_{k\in[n]}\|A_{k}\|_{F}$. For each $k\in[n]$, let $\lambda_{k}=\|A_{k}\|_{F}$ and write $A_{k}=\lambda_{k}B_{k}$. Then $B_{1},\ldots,B_{n}$ are orthonormal in $M_{d}(\mathbb{R})$. Extend $B_{1},\ldots,B_{n}$ to an orthonormal basis $(B_{1},\ldots,B_{d^{2}})$ for $M_{d}(\mathbb{R})$. Note that $B_{n+1},\ldots,B_{d^{2}}$ are not necessarily self-adjoint. For a matrix $D\in M_{d}(\mathbb{C})$, define $|D|^{2}=D^{*}D$. We have
\begin{align*}
&\left|\sum_{k_{1},k_{2}\in[n]}\mathrm{Tr}(Q_{1}Y^{2}Q_{2}A_{k_{1}}Q_{3}A_{k_{2}}Q_{4}A_{k_{1}}Q_{5}A_{k_{2}})\right|\\=&
\left|\sum_{k_{1}=1}^{n}\mathrm{Tr}\left((YQ_{2}A_{k_{1}}Q_{3})\sum_{k_{2}=1}^{n}A_{k_{2}}Q_{4}A_{k_{1}}Q_{5}A_{k_{2}}Q_{1}Y\right)\right|\\\leq&
\sum_{k_{1}=1}^{n}\left[[\mathrm{Tr}(YQ_{2}A_{k_{1}}^{2}Q_{2}^{*}Y)]^{\frac{1}{2}}\left(\mathrm{Tr}\left|\sum_{k_{2}=1}^{n}A_{k_{2}}Q_{4}A_{k_{1}}Q_{5}A_{k_{2}}Q_{1}Y\right|^{2}\right)^{\frac{1}{2}}\right]\\\leq&
\left(\sum_{k_{1}=1}^{n}\mathrm{Tr}(YQ_{2}A_{k_{1}}^{2}Q_{2}^{*}Y)\right)^{\frac{1}{2}}\left(\sum_{k_{1}=1}^{n}\mathrm{Tr}\left|\sum_{k_{2}=1}^{n}A_{k_{2}}Q_{4}A_{k_{1}}Q_{5}A_{k_{2}}Q_{1}Y\right|^{2}\right)^{\frac{1}{2}}\\\leq&
\left(\sum_{k_{1}=1}^{n}\mathrm{Tr}(YQ_{2}A_{k_{1}}^{2}Q_{2}^{*}Y)\right)^{\frac{1}{2}}\left(\sum_{k_{1}=1}^{d^{2}}\beta^{2}\mathrm{Tr}\left|\sum_{k_{2}=1}^{n}A_{k_{2}}Q_{4}B_{k_{1}}Q_{5}A_{k_{2}}Q_{1}Y\right|^{2}\right)^{\frac{1}{2}},
\end{align*}
where we use the cyclic property of the trace in the first equality, we use Cauchy-Schwarz inequality in the first and second inequalities, and we use the fact that $A_{k}=\lambda_{k}B_{k}$ with $0\leq\lambda_{k}\leq\beta$ and extend the sum over $k_{1}$ to $1,\ldots,d^{2}$ in the last inequality.

For the first term,
\[\sum_{k_{1}=1}^{n}\mathrm{Tr}(YQ_{2}A_{k_{1}}^{2}Q_{2}^{*}Y)=\mathrm{Tr}\left(YQ_{2}\left(\sum_{k=1}^{n}A_{k}^{2}\right)Q_{2}^{*}Y\right)\leq\left\|\sum_{k=1}^{n}A_{k}^{2}\right\|\mathrm{Tr}(Y^{2}).\]
For the second term,
\begin{align*}
&\sum_{k_{1}=1}^{d^{2}}\beta^{2}\mathrm{Tr}\left|\sum_{k_{2}=1}^{n}A_{k_{2}}Q_{4}B_{k_{1}}Q_{5}A_{k_{2}}Q_{1}Y\right|^{2}\\=&
\beta^{2}\sum_{k_{1}=1}^{d^{2}}\mathrm{Tr}\left(\sum_{k_{3}=1}^{n}YQ_{1}^{*}A_{k_{3}}Q_{5}^{*}B_{k_{1}}^{*}Q_{4}^{*}A_{k_{3}}\right)
\left(\sum_{k_{2}=1}^{n}A_{k_{2}}Q_{4}B_{k_{1}}Q_{5}A_{k_{2}}Q_{1}Y\right)\\=&
\beta^{2}\sum_{k_{2},k_{3}\in[n]}\mathrm{Tr}\left(YQ_{1}^{*}A_{k_{3}}Q_{5}^{*}\left(\sum_{k_{1}=1}^{d^2}B_{k_{1}}^{*}Q_{4}^{*}A_{k_{3}}A_{k_{2}}Q_{4}B_{k_{1}}\right)
Q_{5}A_{k_{2}}Q_{1}Y\right)\\=&
\beta^{2}\sum_{k_{2},k_{3}\in[n]}\mathrm{Tr}(Q_{4}^{*}A_{k_{3}}A_{k_{2}}Q_{4})\mathrm{Tr}(YQ_{1}^{*}A_{k_{3}}Q_{5}^{*}Q_{5}A_{k_{2}}Q_{1}Y)\\=&
\beta^{2}\sum_{k_{2},k_{3}\in[n]}\mathrm{Tr}(A_{k_{3}}A_{k_{2}})\mathrm{Tr}(YQ_{1}^{*}A_{k_{3}}A_{k_{2}}Q_{1}Y)\\=&
\beta^{2}\sum_{k=1}^{n}\|A_{k}\|_{F}^{2}\mathrm{Tr}(YQ_{1}^{*}A_{k}^{2}Q_{1}Y)\\\leq&
\beta^{4}\sum_{k=1}^{n}\mathrm{Tr}(YQ_{1}^{*}A_{k}^{2}Q_{1}Y)=\beta^{4}\mathrm{Tr}\left(YQ_{1}^{*}\left(\sum_{k=1}^{n}A_{k}^{2}\right)Q_{1}Y\right)\leq\beta^{4}\left\|\sum_{k=1}^{n}A_{k}^{2}\right\|\mathrm{Tr}(Y^{2}),
\end{align*}
where we expand the $|\ldots|^{2}$ in the first equality, rearrange the sums in the second equality, use Lemma \ref{orthtr} in the third equality, use $Q_{4}Q_{4}^{*}=Q_{5}^{*}Q_{5}=I$ in the fourth equality, use $\mathrm{Tr}(A_{k_{1}}A_{k_{2}})=0$, for all $k_{1}\neq k_{2}$, in the fifth equality and use $\|A_{k}\|_{F}\leq\beta$ in the first inequality.

Therefore, the result follows.
\end{proof}
\begin{remark}
By modifying the proof of Lemma \ref{tracecross} slightly, one can see that if
$A_{1},\ldots,A_{n}\in M_{d}(\mathbb{R})$ are self-adjoint matrices and $\mathrm{Tr}(A_{k_{1}}A_{k_{2}})=0$ for all $k_{1}\neq k_{2}$ in $[n]$, then
\[\left|\sum_{k_{1},k_{2}\in[n]}\langle A_{k_{1}}Q_{1}A_{k_{2}}Q_{2}A_{k_{1}}Q_{3}A_{k_{2}}v,v\rangle\right|\leq\left(\max_{k\in[n]}\|A_{k}\|_{F}\right)^{2}\left\langle\sum_{k=1}^{n}A_{k}^{2}v,v\right\rangle,\]
for all $v\in\mathbb{R}^{d}$ and unitary $Q_{1},Q_{2},Q_{3}\in M_{d}(\mathbb{C})$. Thus, in this case, the quantity $w(\sum_{k=1}^{n}g_{k}A_{k})$, introduced in \cite{Tropp2ndorder}, satisfies
\[\sup_{Q_{1},Q_{2},Q_{3}}\left\|\sum_{k_{1},k_{2}\in[n]} A_{k_{1}}Q_{1}A_{k_{2}}Q_{2}A_{k_{1}}Q_{3}A_{k_{2}}\right\|^{\frac{1}{4}}\leq 2\left(\max_{k\in[n]}\|A_{k}\|_{F}\right)^{\frac{1}{2}}\left\|\sum_{k=1}^{n}A_{k}^{2}\right\|^{\frac{1}{4}}.\]

\end{remark}
\begin{lemma}[\cite{Tropp2ndorder}, Proposition 8.3]\label{tropplemma}
Suppose that $F:(M_{d}(\mathbb{C}))^{s}\to\mathbb{C}$ is a multilinear function and $X_{1},\ldots,X_{s}$ are random (not necessarily independent) self-adjoint matrices in $M_{d}(\mathbb{C})$ such that $\mathbb{E}\|X_{i}\|^{s}<\infty$ for all $i\in[s]$. Then
\[|\mathbb{E}F(X_{1},\ldots,X_{s})|\leq\max_{j\in[s]}\mathbb{E}\max_{Q_{1},\ldots,Q_{s}}|F(Q_{1},\ldots,Q_{j-1},Q_{j}X_{j}^{s},Q_{j+1},\ldots,Q_{s})|,\]
where the second maximum is over all $d\times d$ (random) unitary matrices $Q_{1},\ldots,Q_{s}$ in $M_{d}(\mathbb{C})$.
\end{lemma}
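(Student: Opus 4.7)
The plan is to prove Lemma \ref{tropplemma} by first reducing to a Hölder-type bound for trace monomials, and then re-expressing the resulting Schatten-norm expression as an evaluation of $F$ at unitaries in all but one coordinate.

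By multilinearity, it suffices to treat trace monomials of the form $F(Y_1, \ldots, Y_s) = \mathrm{Tr}(M_0 Y_1 M_1 Y_2 \cdots Y_s M_s)$ for fixed matrices $M_0, \ldots, M_s$, since an arbitrary multilinear $F$ is a linear combination of such monomials. For such $F$, the generalized Hölder inequality for Schatten norms with all exponents equal to $s$ (which gives $\sum_j 1/s = 1$) yields
\[|F(X_1, \ldots, X_s)| \leq \left(\prod_{i=0}^{s} \|M_i\|\right) \prod_{j=1}^{s} \|X_j\|_{S^s}.\]
Combined with the elementary estimate $\prod_j a_j \leq \max_j a_j^s$ for $a_j \geq 0$, this gives a single-coordinate bound
\[|F(X_1, \ldots, X_s)| \leq C_F \max_{j\in [s]} \|X_j\|_{S^s}^s,\]
where $C_F = \prod_{i=0}^{s}\|M_i\|$.

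Next I would show that this upper bound is realized (up to matching constants) on the right-hand side of the lemma. The key tool is the duality identity $\sup_{Q \text{ unitary}} |\mathrm{Tr}(QA)| = \|A\|_{*}$, applied iteratively to each unitary slot of the expression $F(Q_1, \ldots, Q_{j-1}, Q_j X_j^s, Q_{j+1}, \ldots, Q_s)$. Since $X_j$ is self-adjoint, $\|X_j\|_{S^s}^s = \mathrm{Tr}|X_j|^s = \|X_j^s\|_{*}$, and a careful iteration using cyclicity of the trace should produce the matching lower bound
\[\max_{Q_1, \ldots, Q_s} |F(Q_1, \ldots, Q_{j-1}, Q_j X_j^s, Q_{j+1}, \ldots, Q_s)| \gtrsim C_F \|X_j\|_{S^s}^s.\]
Combining these two steps yields the pointwise estimate
\[|F(X_1, \ldots, X_s)| \leq \max_{Q_1, \ldots, Q_s} |F(Q_1, \ldots, Q_{j-1}, Q_j X_j^s, Q_{j+1}, \ldots, Q_s)|\]
for \emph{every} $j \in [s]$.

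Finally, taking expectations of this pointwise bound gives, for every $j$, $\mathbb{E}|F(X_1, \ldots, X_s)| \leq \mathbb{E} \max_{Q_1,\ldots,Q_s} |F(Q_1, \ldots, Q_j X_j^s, \ldots, Q_s)|$, and hence
\[|\mathbb{E}F(X_1,\ldots,X_s)| \leq \mathbb{E}|F(X_1,\ldots,X_s)| \leq \max_{j \in [s]} \mathbb{E} \max_{Q_1,\ldots,Q_s} |F(Q_1, \ldots, Q_j X_j^s, \ldots, Q_s)|,\]
where the integrability hypothesis $\mathbb{E}\|X_i\|^s < \infty$ ensures that all expectations are finite. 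The main technical obstacle I anticipate is the constant-matching in the second step: one must track carefully how the operator-norm factors $\|M_i\|$ are recovered by the iterated supremum over unitaries, with no spurious dimensional or Schatten-norm losses. This amounts to a duality calculation leveraging cyclicity of the trace and the self-adjointness of the $X_j$.
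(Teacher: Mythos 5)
The paper does not prove this lemma at all --- it is quoted verbatim from Tropp (\emph{Second-order matrix concentration inequalities}, Proposition 8.3) --- so the only question is whether your argument stands on its own, and it does not. The first gap is the reduction to trace monomials: the inequality you must prove is not additive in $F$, because the right-hand side involves $\max_{Q_1,\ldots,Q_s}|F(\cdots)|$ of the \emph{whole} form. Writing $F=\sum_\alpha F_\alpha$ and bounding each monomial separately would give $\sum_\alpha\max_Q|F_\alpha(\cdots)|$, which neither controls nor is controlled by $\max_Q|\sum_\alpha F_\alpha(\cdots)|$; cancellations between monomials are lost. The second gap is the ``matching lower bound'' $\max_{Q}|F(Q_1,\ldots,Q_jX_j^s,\ldots,Q_s)|\gtrsim C_F\|X_j\|_{S^s}^s$ with $C_F=\prod_i\|M_i\|$: this is false. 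Take $M_0=e_1e_1^{*}$ a rank-one projection; then every evaluation of $F$ is the trace of a rank-one matrix, so $\max_Q|F(\cdots Q_jX_j^s\cdots)|$ is bounded by operator norms of $X_j^s$ and can be a factor of $d$ smaller than $\|M_0\|\prod_{i\geq1}\|M_i\|\cdot\mathrm{Tr}|X_j|^s$. Already for $s=1$ your scheme fails for this reason even though the lemma is then trivial. Moreover the lemma has no implied constant, so even a correct lower bound ``up to matching constants'' would not close the argument.

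There is a third structural problem: your upper bound $|F(X_1,\ldots,X_s)|\leq C_F\max_j\|X_j\|_{S^s}^s$ only identifies \emph{one} (realization-dependent) index $j^{*}$ for which the pointwise comparison could hold; it does not hold for every $j$, since for $j\neq j^{*}$ the quantity $\|X_j\|_{S^s}^s$ may be tiny while $|F|$ is large. Taking expectations with a random $j^{*}$ produces $\mathbb{E}\max_j\max_Q|F(\cdots)|$, which dominates (rather than is dominated by) the quantity $\max_j\mathbb{E}\max_Q|F(\cdots)|$ in the statement. Tropp's actual proof avoids all three issues: one diagonalizes each $X_j$, expands $F$ multilinearly over the eigenvector bases, applies the scalar inequality $\prod_j|\lambda_j|\leq\frac{1}{s}\sum_j|\lambda_j|^s$ to the eigenvalue products appearing in each term, and recognizes the $j$th resulting sum as $F$ evaluated with $X_j^s$ in slot $j$ and explicit unitaries (built from the eigenbases and signs) in the other slots. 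The unitaries thus arise constructively, the constant is exactly $1$, no monomial decomposition is needed, and the average $\frac{1}{s}\sum_j$ passes through the expectation to give $\max_j\mathbb{E}(\cdots)$ in the correct order. If you want to salvage your write-up, that is the route to take.
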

\begin{lemma}\label{tracecross2}
Suppose that $A_{1},\ldots,A_{n}\in M_{d}(\mathbb{R})$ are self-adjoint matrices and $\mathrm{Tr}(A_{k_{1}}A_{k_{2}})=0$ for all $k_{1}\neq k_{2}$ in $[n]$. Let $p_{1}\leq\ldots\leq p_{5}$ in $\mathbb{N}$ with $p_{5}$ being even and let $X_{1},\ldots,X_{p_{5}}$ be real random self-adjoint matrices such that $\mathbb{E}\|X_{i}\|^{p_{5}}<\infty$ for all $i\in[s]$. Then
\begin{align*}
&\left|\mathbb{E}\sum_{k_{1},k_{2}\in[n]}\mathrm{Tr}\left((\prod_{i=1}^{p_{1}}X_{i})A_{k_{1}}(\prod_{i=p_{1}+1}^{p_{2}}X_{i})A_{k_{2}}(\prod_{i=p_{2}+1}^{p_{3}}X_{i})A_{k_{1}}(\prod_{i=p_{3}+1}^{p_{4}}X_{i})A_{k_{2}}(\prod_{i=p_{4}+1}^{p_{5}}X_{i})\right)\right|\\\leq&
\left(\max_{k\in[n]}\|A_{k}\|_{F}\right)^{2}\left\|\sum_{k=1}^{n}A_{k}^{2}\right\|\max_{j\in[p_{5}]}\mathbb{E}\mathrm{Tr}(X_{j}^{p_{5}}),
\end{align*}
where empty products are the identity, e.g., $\prod_{i=p_{1}+1}^{p_{1}}X_{i}=I$. 
\end{lemma}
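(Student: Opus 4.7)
The strategy is to combine Lemma~\ref{tropplemma} (Tropp's dimensional reduction) with Lemma~\ref{tracecross}. Tropp's lemma collapses the five separate factors $X_1, \ldots, X_{p_5}$ into a single copy of $X_j^{p_5}$ (for some $j \in [p_5]$) sandwiched between arbitrary unitaries; Lemma~\ref{tracecross} is exactly the bound we need in that simplified setting, once we identify $X_j^{p_5} = Y^2$ with $Y = X_j^{p_5/2}$, which is well-defined and real self-adjoint since $p_5$ is even and $X_j$ is real self-adjoint.

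Concretely, I would first view the trace expression as $F(X_1, \ldots, X_{p_5})$, where
\begin{equation*}
F(T_1, \ldots, T_{p_5}) = \sum_{k_1, k_2 \in [n]} \mathrm{Tr}\!\left( \prod_{i=1}^{p_1} T_i \cdot A_{k_1} \prod_{i=p_1+1}^{p_2} T_i \cdot A_{k_2} \prod_{i=p_2+1}^{p_3} T_i \cdot A_{k_1} \prod_{i=p_3+1}^{p_4} T_i \cdot A_{k_2} \prod_{i=p_4+1}^{p_5} T_i \right)
\end{equation*}
is multilinear in $(T_1,\ldots,T_{p_5})\in M_d(\mathbb{C})^{p_5}$. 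Since the $X_i$, being real symmetric, are in particular complex self-adjoint, Lemma~\ref{tropplemma} applied with $s = p_5$ yields
\begin{equation*}
|\mathbb{E} F(X_1, \ldots, X_{p_5})| \le \max_{j \in [p_5]} \mathbb{E} \sup_{Q_1, \ldots, Q_{p_5}} \bigl|F(Q_1, \ldots, Q_{j-1}, Q_j X_j^{p_5}, Q_{j+1}, \ldots, Q_{p_5})\bigr|,
\end{equation*}
with the supremum over unitaries $Q_i \in M_d(\mathbb{C})$. For fixed $j$, let $\ell_0 \in [5]$ be the unique index with $p_{\ell_0-1} < j \le p_{\ell_0}$ (with the convention $p_0 = 0$). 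Then four of the five blocks in the product are products of the $Q_i$'s and hence unitary, and the $\ell_0$-th block has the form $U X_j^{p_5} V = U Y \cdot Y V$ for some unitaries $U,V$ (with $Q_j$ absorbed into $U$). Using cyclic invariance of the trace, I would rotate the argument of $\mathrm{Tr}$ to bring $Y^2$ to the front, putting the expression in the form
\begin{equation*}
\sum_{k_1, k_2 \in [n]} \mathrm{Tr}\bigl( \tilde Q_1 Y^2 \tilde Q_2 A_{k_1'} \tilde Q_3 A_{k_2'} \tilde Q_4 A_{k_1'} \tilde Q_5 A_{k_2'} \bigr),
\end{equation*}
where the $\tilde Q_i$ are new unitaries and $(k_1', k_2')$ equals $(k_1, k_2)$ or $(k_2, k_1)$ depending on the parity of the number of $A$-factors rotated past (the pattern $A_{k_1} A_{k_2} A_{k_1} A_{k_2}$ is invariant under rotation by two $A$-positions and swaps $k_1 \leftrightarrow k_2$ under rotation by one; summing over $(k_1,k_2) \in [n]^2$ is symmetric either way).

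Applying Lemma~\ref{tracecross} with this $Y$ and the $\tilde Q_i$, the inner sum is bounded by $(\max_k \|A_k\|_F)^2 \|\sum_k A_k^2\| \, \mathrm{Tr}(Y^2) = (\max_k \|A_k\|_F)^2 \|\sum_k A_k^2\| \, \mathrm{Tr}(X_j^{p_5})$, which does not depend on the $\tilde Q_i$. Taking expectation and the max over $j$ then yields the claim. I expect the main obstacle to be purely organizational: carefully verifying the cyclic bookkeeping in each of the five cases $\ell_0 \in \{1, \ldots, 5\}$ (in particular the boundary cases where the block containing $X_j^{p_5}$ sits at the very front or back, and the cases where $\ell_0$ has an empty adjacent block because some $p_i = p_{i+1}$), and checking that in every case one lands exactly on an instance of Lemma~\ref{tracecross}. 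There is no analytic difficulty beyond composing the two lemmas.
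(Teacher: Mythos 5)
Your proposal is correct and follows essentially the same route as the paper: apply Lemma~\ref{tropplemma} to the multilinear functional $F$, use cyclicity of the trace to reorganize each $F(Q_1,\ldots,Q_jX_j^{p_5},\ldots,Q_{p_5})$ into the five-unitary form of Lemma~\ref{tracecross} with $Y=X_j^{p_5/2}$ (well-defined since $p_5$ is even), and conclude. The cyclic bookkeeping you flag as the main obstacle is handled in the paper in one line ("by the cyclic property of the trace"), and your observation that the alternating pattern $A_{k_1}A_{k_2}A_{k_1}A_{k_2}$ is preserved up to the symmetric relabeling $k_1\leftrightarrow k_2$ is exactly the point that makes it work.
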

\begin{proof}
Define $F:(M_{d}(\mathbb{C}))^{p_{5}}\to\mathbb{C}$ by
\begin{align*}
&F(Y_{1},\ldots,Y_{p_{5}})\\=&
\sum_{k_{1},k_{2}\in[n]}\mathrm{Tr}\left((\prod_{i=1}^{p_{1}}Y_{i})A_{k_{1}}(\prod_{i=p_{1}+1}^{p_{2}}Y_{i})A_{k_{2}}(\prod_{i=p_{2}+1}^{p_{3}}Y_{i})A_{k_{1}}(\prod_{i=p_{3}+1}^{p_{4}}Y_{i})A_{k_{2}}(\prod_{i=p_{4}+1}^{p_{5}}Y_{i})\right).
\end{align*}
For all $j\in[p_{5}]$ and $d\times d$ unitary matrices $Q_{1},\ldots,Q_{p_{5}}$, there exist $d\times d$ unitary matrices $Q_{1}',\ldots,Q_{5}'$ such that
\[F(Q_{1},\ldots,Q_{j-1},Q_{j}X_{j}^{p_{5}},Q_{j+1},\ldots,Q_{p_{5}})=\sum_{k_{1},k_{2}\in[n]}\mathrm{Tr}(Q_{1}'X_{j}^{p_{5}}Q_{2}'A_{k_{1}}Q_{3}'A_{k_{2}}Q_{4}'A_{k_{1}}Q_{5}'A_{k_{2}}),\]
by the cyclic property of the trace. So by Lemma \ref{tropplemma},
\[|\mathbb{E}F(X_{1},\ldots,X_{p_{5}})|\leq\max_{j\in[p_{5}]}\mathbb{E}\max_{Q_{1}',\ldots,Q_{5}'}\left|\sum_{k_{1},k_{2}\in[n]}\mathrm{Tr}(Q_{1}'X_{j}^{p_{5}}Q_{2}'A_{k_{1}}Q_{3}'A_{k_{2}}Q_{4}'A_{k_{1}}Q_{5}'A_{k_{2}})\right|,\]
where the second maximum is over all $d\times d$ (random) unitary matrices $Q_{1}',\ldots,Q_{5}'$. Thus, since $p_{5}$ is even, by Lemma \ref{tracecross},
\[|\mathbb{E}F(X_{1},\ldots,X_{p_{5}})|\leq\left(\max_{k\in[n]}\|A_{k}\|_{F}\right)^{2}\left\|\sum_{k=1}^{n}A_{k}^{2}\right\|\max_{j\in[p_{5}]}\mathbb{E}\mathrm{Tr}(X_{j}^{p_{5}}).\]
\end{proof}
\subsection{Tensor products}\label{tensorsection}
Suppose that $S$ is a finite set. If $\nu$ is a partition of $S$ and $i,j\in S$, then $i\stackrel{\nu}{\sim}j$ means that $i$ and $j$ are in the same block of $\nu$. For partitions $\nu_{1}$ and $\nu_{2}$ of $S$, we write $\nu_{1}\leq\nu_{2}$ if whenever $i\stackrel{\nu_{1}}{\sim}j$, we have $i\stackrel{\nu_{2}}{\sim}j$. For example, $\{\{1\},\{2\},\{3,4\}\}\leq\{\{1,2\},\{3,4\}\}$. For a partition $\nu$ of $S$, a subset $S_{0}$ of $S$ {\it splits} $\nu$ if whenever $i\stackrel{\nu}{\sim}j$ and $j\in S_{0}$, we have $i\in S_{0}$, or equivalently, $S_{0}$ is a union of blocks of $\nu$. For a function $f:S\to T$, where $T$ is a set, we write $f\sim\nu$ if whenever $i\stackrel{\nu}{\sim}j$ in $S$, we have $f(i)=f(j)$, or equivalently, $f$ is constant on each block of $\nu$.

A {\it pair partition} of $S$ is a partition where each block has exactly two elements. The set of all pair partitions of $S$ is denoted by $\mathbb{P}_{2}(S)$. Note that if $|S|$ is odd then $\mathbb{P}_{2}(S)=\emptyset$.

Suppose that $S$ is totally ordered. A partition $\nu$ of $S$ is {\it noncrossing} if whenever $i_{1}<i_{2}<i_{3}<i_{4}$ in $S$ and $i_{1}\stackrel{\nu}{\sim}i_{3}$ and $i_{2}\stackrel{\nu}{\sim}i_{4}$, we have $i_{1}\stackrel{\nu}{\sim}i_{2}\stackrel{\nu}{\sim}i_{3}\stackrel{\nu}{\sim}i_{4}$. The set of all noncrossing pair partitions of $S$ is denoted by $\mathrm{NC}_{2}(S)$. A partition is {\it crossing} if it is not noncrossing. The set of all crossing pair partitions of $S$ is denoted by $\mathrm{Cr}_{2}(S)=\mathbb{P}_{2}(S)\backslash\mathrm{NC}_{2}(S)$.

In the following two lemmas, the tensor products are the usual multilinear tensor products for vector spaces.
\begin{lemma}\label{gaussiantensor}
Suppose that $V$ is a vector space over $\mathbb{R}$, $A_{1},\ldots,A_{n}\in V$ and $g_{1},\ldots,g_{n}$ are i.i.d.~standard Gaussian random variables. Let $X=\sum_{k=1}^{n}g_{k}A_{k}$ and $X^{\otimes p}=\underbrace{X\otimes\ldots\otimes X}_{p}$. Then
\[\mathbb{E}(X^{\otimes p})=\sum_{\nu\in\mathbb{P}_{2}([p])}\sum_{\substack{f:[p]\to[n]\\f\sim\nu}}A_{f(1)}\otimes\ldots\otimes A_{f(p)}.\]
\end{lemma}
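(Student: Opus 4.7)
The plan is to expand $X^{\otimes p}$ by multilinearity and then invoke Isserlis' (Wick's) formula for Gaussian moments. Writing
\[X^{\otimes p}=\sum_{f:[p]\to[n]}g_{f(1)}g_{f(2)}\cdots g_{f(p)}\,A_{f(1)}\otimes\cdots\otimes A_{f(p)},\]
and taking the expectation inside the (finite) sum reduces the statement to computing $\mathbb{E}[g_{f(1)}\cdots g_{f(p)}]$ for each function $f:[p]\to[n]$.

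Next I would apply the classical Isserlis/Wick formula for centered jointly Gaussian variables: for any (not necessarily distinct) indices $k_{1},\ldots,k_{p}$,
\[\mathbb{E}[g_{k_{1}}\cdots g_{k_{p}}]=\sum_{\nu\in\mathbb{P}_{2}([p])}\prod_{\{i,j\}\in\nu}\mathbb{E}[g_{k_{i}}g_{k_{j}}].\]
Since $g_{1},\ldots,g_{n}$ are i.i.d.~standard Gaussians, $\mathbb{E}[g_{k_{i}}g_{k_{j}}]=\delta_{k_{i},k_{j}}$, so the product over a pair partition $\nu$ equals $1$ if $f(i)=f(j)$ whenever $\{i,j\}\in\nu$, and $0$ otherwise. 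By definition, this product equals $\mathbf{1}_{f\sim\nu}$. In particular, when $p$ is odd the set $\mathbb{P}_{2}([p])$ is empty and both sides in the lemma vanish, which is consistent.

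Substituting back and exchanging the two finite sums yields
\[\mathbb{E}(X^{\otimes p})=\sum_{f:[p]\to[n]}\Bigl(\sum_{\nu\in\mathbb{P}_{2}([p])}\mathbf{1}_{f\sim\nu}\Bigr)A_{f(1)}\otimes\cdots\otimes A_{f(p)}=\sum_{\nu\in\mathbb{P}_{2}([p])}\sum_{\substack{f:[p]\to[n]\\ f\sim\nu}}A_{f(1)}\otimes\cdots\otimes A_{f(p)},\]
which is exactly the claim. There is no real obstacle: the only subtlety worth noting is that Wick's formula is usually stated for scalar moments, but since $V$ is just a real vector space (finite-dimensional for our purposes, and in any case the tensor can be coordinatized), applying it componentwise in a chosen basis of $V^{\otimes p}$ is immediate by multilinearity. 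No positivity or normalization assumption on the $A_{k}$ is needed, and the result holds verbatim for any real vector space $V$.
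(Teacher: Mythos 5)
Your proof is correct, but it takes a different route from the paper. You invoke the classical Isserlis--Wick formula for scalar Gaussian moments, specialize it using $\mathbb{E}[g_{k_i}g_{k_j}]=\delta_{k_i,k_j}$ to get the indicator $\mathbf{1}_{f\sim\nu}$, and transfer the identity to $V^{\otimes p}$ componentwise; the paper instead gives a self-contained induction on $p$ via Gaussian integration by parts, $\mathbb{E}[g_k F(X)]=\mathbb{E}[\partial_{g_k}F(X)]$, pairing the first tensor factor with each of the remaining $p-1$ positions and identifying the resulting pair partitions as $\sigma\cup\{\{1,j+1\}\}$. In effect the paper reproves the tensor version of Wick's formula from scratch, whereas you cite the scalar version as a black box and linearize. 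Your argument is shorter and arguably more transparent, at the cost of an external reference; the paper's argument is self-contained and its integration-by-parts mechanism foreshadows the recursive structure exploited later (e.g.\ in Lemma \ref{gaussiantensor2} and the moment recursions). Your remark about coordinatizing $V$ is the right way to handle the general vector space: the $A_k$ span a finite-dimensional subspace, so everything reduces to finitely many scalar identities. No gap.
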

\begin{proof}
If $p$ is odd, then both sides are $0$ by symmetry of $X$ and $\mathbb{P}_{2}([p])=\emptyset$. It is easy to see that the result holds for $p=2$. For an even number $p\geq 4$, by Gaussian integration by parts,
\[\mathbb{E}(X^{\otimes p})=\sum_{k=1}^{n}\mathbb{E}g_{k}A_{k}\otimes X^{\otimes p-1}=\sum_{k=1}^{n}\sum_{j=1}^{p-1}\mathbb{E}(A_{k}\otimes X^{\otimes(j-1)}\otimes A_{k}\otimes X^{\otimes(p-1-j)}),\]
where when $j=1$ or $p-1$, the term $X^{\otimes 0}$ is not present. So applying induction hypothesis to $\mathbb{E}(X^{\otimes(j-1)}\otimes X^{\otimes(p-1-j)})$, we obtain
\begin{eqnarray*}
\mathbb{E}(X^{\otimes p})&=&\sum_{j=1}^{p-1}\sum_{k=1}^{n}\sum_{\sigma\in\mathbb{P}_{2}(\{2,\ldots,j\}\cup\{j+2,\ldots,p\})}\sum_{\substack{h:\{2,\ldots,j\}\cup\{j+2,\ldots,p\}\to[n]\\f\sim\sigma}}\\&&
A_{k}\otimes(A_{h(2)}\otimes\ldots\otimes A_{h(j)})\otimes A_{k}\otimes(A_{h(j+2)}\otimes\ldots\otimes A_{h(p)})\\&=&
\sum_{\nu\in\mathbb{P}_{2}([p])}\sum_{\substack{f:[p]\to[n]\\f\sim\nu}}A_{f(1)}\otimes\ldots\otimes A_{f(p)},
\end{eqnarray*}
via the identification $\nu=\sigma\cup\{\{1,j+1\}\}$ and $f(i)=\begin{cases}k,&i=1\text{ or }j+1\\h(i),&\text{Otherwise}\end{cases}$.
\end{proof}
\begin{lemma}\label{gaussiantensor2}
Suppose that $V$ is a vector space over $\mathbb{R}$, $A_{1},\ldots,A_{n}\in V$ and $g_{1},\ldots,g_{n}$ are i.i.d.~standard Gaussian random variables. Let $\sigma$ be a partition of $[p]$. Then there exist random variables $X_{1},\ldots,X_{p}$ taking values in $V$ such that each individual $X_{i}$ has the same distribution over $V$ as $\sum_{k=1}^{n}g_{k}A_{k}$ and
\[\sum_{\substack{\nu\in\mathbb{P}_{2}([p])\\\nu\leq\sigma}}\sum_{\substack{f:[p]\to[n]\\f\sim\nu}}A_{f(1)}\otimes\ldots\otimes A_{f(p)}=\mathbb{E}(X_{1}\otimes\ldots\otimes X_{p}).\]
\end{lemma}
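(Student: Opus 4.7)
\textbf{Proof plan for Lemma \ref{gaussiantensor2}.} The strategy is to realize the constrained moment on the left as an honest Gaussian expectation by introducing, for each block of $\sigma$, an independent copy of the underlying Gaussian vector. The Wick/Isserlis calculation together with block-independence will then produce exactly the pair partitions that refine $\sigma$.

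Concretely, let the blocks of $\sigma$ be $B_{1},\ldots,B_{m}$, and for each $r\in[m]$ let $(g_{1}^{(r)},\ldots,g_{n}^{(r)})$ be i.i.d.\ standard Gaussians, with the collections corresponding to distinct blocks $B_{r}$ chosen mutually independent. For $i\in[p]$ let $r(i)$ denote the unique index with $i\in B_{r(i)}$, and define
\[
X_{i}\;=\;\sum_{k=1}^{n}g_{k}^{(r(i))}A_{k}.
\]
Each $X_{i}$ is individually a Gaussian combination of $A_{1},\ldots,A_{n}$ with i.i.d.\ standard coefficients, so it has exactly the distribution of $\sum_{k}g_{k}A_{k}$, as required.

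Next I would expand the tensor moment by multilinearity:
\[
\mathbb{E}(X_{1}\otimes\cdots\otimes X_{p})
\;=\;\sum_{f:[p]\to[n]}\Bigl(\mathbb{E}\prod_{i=1}^{p}g_{f(i)}^{(r(i))}\Bigr)\,A_{f(1)}\otimes\cdots\otimes A_{f(p)}.
\]
Because Gaussian families corresponding to different blocks of $\sigma$ are independent, the expectation factorizes over the blocks of $\sigma$:
\[
\mathbb{E}\prod_{i=1}^{p}g_{f(i)}^{(r(i))}\;=\;\prod_{r=1}^{m}\mathbb{E}\prod_{i\in B_{r}}g_{f(i)}^{(r)}.
\]
Applying Wick's formula within each block gives $\mathbb{E}\prod_{i\in B_{r}}g_{f(i)}^{(r)}=\sum_{\pi_{r}\in\mathbb{P}_{2}(B_{r})}\prod_{\{i,j\}\in\pi_{r}}\mathbf{1}_{f(i)=f(j)}$, which is $0$ when $|B_{r}|$ is odd (consistent with the fact that no pair partition of $[p]$ can refine $\sigma$ in that case).

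Finally, a tuple $(\pi_{1},\ldots,\pi_{m})$ of pair partitions of the individual blocks is in canonical bijection with a pair partition $\nu$ of $[p]$ satisfying $\nu\leq\sigma$, via $\nu=\pi_{1}\cup\cdots\cup\pi_{m}$. Under this bijection the product of indicators $\prod_{r}\prod_{\{i,j\}\in\pi_{r}}\mathbf{1}_{f(i)=f(j)}$ is precisely $\mathbf{1}_{f\sim\nu}$. Substituting back and swapping the two sums yields
\[
\mathbb{E}(X_{1}\otimes\cdots\otimes X_{p})
\;=\;\sum_{\substack{\nu\in\mathbb{P}_{2}([p])\\ \nu\leq\sigma}}\;\sum_{\substack{f:[p]\to[n]\\ f\sim\nu}}A_{f(1)}\otimes\cdots\otimes A_{f(p)},
\]
which is the desired identity. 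The construction is essentially routine and I do not anticipate a real obstacle; the only point to verify carefully is the bijection between tuples $(\pi_{r})_{r}$ of block-wise pair partitions and refinements $\nu\leq\sigma$ of $\sigma$ by a pair partition, and the matching of the resulting indicator conditions on $f$, which is exactly what makes this a direct analogue of Lemma \ref{gaussiantensor}.
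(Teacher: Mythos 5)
Your construction of the $X_{i}$ --- one independent copy of the Gaussian coefficient vector per block of $\sigma$, with $X_{i}$ depending only on the block containing $i$ --- is exactly the construction in the paper's proof, and your verification is correct. The paper merely organizes the check differently (it first permutes coordinates so that $\sigma$ is an interval partition, factors the left-hand sum as a tensor product of block sums, and applies Lemma \ref{gaussiantensor} to each block), whereas you expand $\mathbb{E}(X_{1}\otimes\cdots\otimes X_{p})$ directly via Wick's formula; these verifications are equivalent, so this is essentially the same proof.
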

\begin{proof}
Without loss of generality, by permuting the order of the tensor product, we may assume that $\sigma$ is an interval partition of $[p]$. Write $\sigma=\{B_{1},\ldots,B_{r}\}$ in the ascending order. Each partition $\nu\in\mathbb{P}_{2}([p])$ with $\nu\leq\sigma$ corresponds to partitions $\nu_{1}\in\mathbb{P}_{2}(B_{1}),\ldots,\nu_{r}\in\mathbb{P}_{2}(B_{r})$, via the correspondence $\nu\mapsto(\nu|_{B_{1}},\ldots,\nu|_{B_{r}})$. Thus,
\begin{align*}
&\sum_{\substack{\nu\in\mathbb{P}_{2}([p])\\\nu\leq\sigma}}\sum_{\substack{f:[p]\to[n]\\f\sim\nu}}A_{f(1)}\otimes\ldots\otimes A_{f(p)}\\=&
\sum_{\nu_{1}\in\mathbb{P}_{2}(B_{1})}\ldots\sum_{\nu_{r}\in\mathbb{P}_{2}(B_{r})}\sum_{\substack{f_{1}:B_{1}\to[n]\\f_{1}\sim\nu_{1}}}\ldots\sum_{\substack{f_{r}:B_{r}\to[n]\\f_{r}\sim\nu_{r}}}\left(\bigotimes_{i\in B_{1}}A_{f_{1}(i)}\right)\otimes\ldots\otimes\left(\bigotimes_{i\in B_{r}}A_{f_{r}(i)}\right)\\=&
\left(\sum_{\nu_{1}\in\mathbb{P}_{2}(B_{1})}\sum_{\substack{f_{1}:B_{1}\to[n]\\f_{1}\sim\nu_{1}}}\bigotimes_{i\in B_{1}}A_{f_{1}(i)}\right)\otimes\ldots\otimes\left(\sum_{\nu_{r}\in\mathbb{P}_{2}(B_{r})}\sum_{\substack{f_{r}:B_{r}\to[n]\\f_{r}\sim\nu_{r}}}\bigotimes_{i\in B_{r}}A_{f_{r}(i)}\right),
\end{align*}
where $\otimes_{i\in B_{j}}$ is the tensor product in the ascending order of $B_{j}$; for example, if $B_{1}=\{1,2,3\}$ then $\otimes_{i\in B_{1}}A_{f_{1}(i)}=A_{f_{1}(1)}\otimes A_{f_{1}(2)}\otimes A_{f_{1}(3)}$. Suppose that $g_{k,j}$, for $k\in[n]$ and $j\in[r]$, are i.i.d.~Gaussian random variables. By Lemma \ref{gaussiantensor},
\[\mathbb{E}\left(\sum_{k=1}^{n}g_{k,j}A_{k}\right)^{\otimes|B_{j}|}=\sum_{\nu_{j}\in\mathbb{P}_{2}(B_{j})}\sum_{\substack{f_{j}:B_{j}\to[n]\\f_{j}\sim\nu_{j}}}\bigotimes_{i\in B_{j}}A_{f_{j}(i)},\]
for every $j\in[r]$. Therefore,
\begin{align*}
&\sum_{\substack{\nu\in\mathbb{P}_{2}([p])\\\nu\leq\sigma}}\sum_{\substack{f:[p]\to[n]\\f\sim\nu}}A_{f(1)}\otimes\ldots\otimes A_{f(p)}\\=&
\left[\mathbb{E}\left(\sum_{k=1}^{n}g_{k,1}A_{k}\right)^{\otimes|B_{1}|}\right]\otimes\ldots\otimes\left[\mathbb{E}\left(\sum_{k=1}^{n}g_{k,r}A_{k}\right)^{\otimes|B_{r}|}\right]\\=&
\mathbb{E}\left[\left(\sum_{k=1}^{n}g_{k,1}A_{k}\right)^{\otimes|B_{1}|}\otimes\ldots\otimes\left(\sum_{k=1}^{n}g_{k,r}A_{k}\right)^{\otimes|B_{r}|}\right],
\end{align*}
where the last equality follows from independence of the $g_{k,j}$. For each $j\in[r]$ and each $i\in B_{j}$, take $X_{i}=\sum_{k=1}^{n}g_{k,j}A_{k}$. (The $X_{i}$ is the same for all $i$ in the same block.) The result follows.
\end{proof}
\pagebreak

\subsection{Proof of the second statement of Theorem \ref{main2}}
\begin{lemma}[\cite{Buchholz}, Corollary 3]\label{buchholz}
Suppose that $A_{1},\ldots,A_{n}\in M_{d}(\mathbb{R})$ are self-adjoint matrices. Then
\[\left|\sum_{\substack{f:[p]\to[n]\\f\sim\nu}}\mathrm{Tr}(A_{f(1)}\ldots A_{f(p)})\right|\leq\mathrm{Tr}\left(\sum_{k=1}^{n}A_{k}^{2}\right)^{\frac{p}{2}},\]
for all even number $p\in\mathbb{N}$ and $\nu\in\mathbb{P}_{2}([p])$.
\end{lemma}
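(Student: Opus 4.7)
The plan is to proceed by induction on $p$, proving the slightly stronger statement that for any positive semidefinite matrix $B\in M_d(\mathbb{R})$ and any pair partition $\nu$ of $[p]$,
\[\left|\sum_{f\sim\nu}\mathrm{Tr}\bigl(B\cdot A_{f(1)}\cdots A_{f(p)}\bigr)\right|\leq\mathrm{Tr}\bigl(B\cdot T^{p/2}\bigr),\]
where $T:=\sum_{k=1}^{n}A_{k}^{2}$ is positive semidefinite (since each $A_{k}$ is self-adjoint). The lemma then follows by taking $B=I$. The base case $p=2$ is immediate: $\sum_{k}\mathrm{Tr}(BA_{k}^{2})=\mathrm{Tr}(BT)$.

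For the inductive step I would distinguish two sub-cases. If $\nu$ contains a cyclically adjacent pair $\{i,i+1\}$, the constraint $f(i)=f(i+1)=k$ permits an internal summation $\sum_{k}A_{k}A_{k}=T$, replacing positions $i,i+1$ by a single factor of $T$ in the trace. Using cyclicity, this $T$ can be folded into the weight (for instance by writing $T=T^{1/2}\cdot T^{1/2}$ and absorbing one $T^{1/2}$ on each side), and the induction hypothesis applied to the pair partition $\nu\setminus\{\{i,i+1\}\}$ on $[p-2]$ with new weight $B'=T^{1/2}BT^{1/2}$ closes this case. If instead $\nu$ has no cyclically adjacent pair, then $\nu$ is necessarily crossing. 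I would pick a crossing $\{a,c\},\{b,d\}\in\nu$ with $a<b<c<d$, use trace cyclicity to rearrange the trace as $\mathrm{Tr}(BPQ)$ with the split at the crossing positions, and apply the operator Cauchy--Schwarz inequality $|\mathrm{Tr}(BPQ)|\leq\mathrm{Tr}(B^{1/2}PP^{*}B^{1/2})^{1/2}\mathrm{Tr}(Q^{*}BQ)^{1/2}$, followed by a Cauchy--Schwarz over the summation $\sum_{f\sim\nu}$. The two resulting sums are of the same form as the strengthened statement, but for ``doubled'' configurations in which the offending crossing has been resolved by a self-matching coming from $PP^{*}$ and $Q^{*}Q$; these doubled configurations always contain cyclically adjacent pairs, so the first sub-case applies.

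The main obstacle I foresee is the combinatorial bookkeeping in the Cauchy--Schwarz step: one must verify that the doubled pair partitions genuinely admit cyclically adjacent pairs, that the induction terminates in an appropriate parameter (for instance, the number of crossings of $\nu$), and that the two factor bounds combine to give exactly $\mathrm{Tr}(B\cdot T^{p/2})$ rather than a weaker trace-power quantity. The strengthened inductive statement with an arbitrary PSD weight $B$ is essential precisely so that the $T$-insertions from contractions and the auxiliary matrices from the Cauchy--Schwarz doubling can both be absorbed into $B$ at each step without breaking the recursion; choosing this invariant correctly is the most delicate part of the argument.
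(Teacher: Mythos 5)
The paper offers no proof of this lemma at all --- it is imported as a black box from \cite{Buchholz}, Corollary 3 --- so you are attempting a from-scratch proof of a nontrivial external result. Unfortunately your strengthened inductive statement is false, and this is a failure of the chosen invariant rather than of bookkeeping. Take $d=n=2$, $A_1=e_1e_1^T$, $A_2=e_1e_2^T+e_2e_1^T$, so that $T=\sum_kA_k^2=\mathrm{diag}(2,1)$, and take $B=e_2e_2^T$, $p=4$, $\nu=\{\{1,4\},\{2,3\}\}$. Then
\[\sum_{\substack{f:[4]\to[2]\\ f\sim\nu}}\mathrm{Tr}\bigl(BA_{f(1)}A_{f(2)}A_{f(3)}A_{f(4)}\bigr)=\sum_{k}\mathrm{Tr}(BA_kTA_k)=e_2^TA_2TA_2e_2=e_1^TTe_1=2,\]
while $\mathrm{Tr}(BT^{2})=e_2^TT^2e_2=1$. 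The point of failure is exactly the step you describe as ``folding $T$ into the weight'': after contracting an adjacent pair, the resulting factor $T$ sits in the \emph{interior} of a non-commutative word, separated from $B$ by matrices $A_{f(j)}$; trace cyclicity only rotates the whole word and cannot transport $T$ to $B$'s position, and the inequality needed to leave it in place fails, as the example shows. When $B=I$ cyclicity does rescue the first contraction (which is why the difficulty is invisible in small tests), but after a Cauchy--Schwarz doubling the weight is no longer $I$, and already two nested contractions with $B=I$ --- e.g.\ $\nu=\{\{1,6\},\{2,5\},\{3,4\}\}$, which reduces to $\sum_l\mathrm{Tr}(TA_lTA_l)$ --- produce two interior $T$-insertions at different positions, a configuration your single-weight invariant cannot even express.

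Your high-level instincts (contract adjacent pairs; resolve crossings by a Cauchy--Schwarz doubling) are sound, and as you anticipated, the whole difficulty is choosing the quantity that survives the recursion; the one you propose does not survive even the noncrossing case once $B\neq I$. A repair must tolerate positive semidefinite insertions at arbitrary positions --- for instance, the configuration $\sum_l\mathrm{Tr}(TA_lTA_l)\leq\mathrm{Tr}(T^3)$ above is handled not by your invariant but by the separate inequality $|\mathrm{Tr}(X^2)|\leq\mathrm{Tr}(XX^*)$ applied to $X=T^{1/2}A_lT^{1/2}$ --- and the known arguments (Buchholz, and related Haagerup--Pisier-type estimates) run the induction on a carefully ordered sequence of such trace Cauchy--Schwarz steps indexed by the pairs of $\nu$, tracking Schatten-norm quantities rather than a single weighted trace. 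If you want a self-contained proof for this paper, you would need to rebuild the induction around that kind of invariant; otherwise the citation the authors give is doing real work.
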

\begin{lemma}\label{nonnegativerecursion}
Suppose that $g_{1},\ldots,g_{n}$ are i.i.d.~standard Gaussian random variables, $A_{1},\ldots,A_{n}\in M_{d}(\mathbb{R})$ are self-adjoint with nonnegative entries and $\mathrm{Tr}(A_{k_{1}}A_{k_{2}})=0$ for all $k_{1}\neq k_{2}$ in $[n]$. Let $X=\sum_{k=1}^{n}g_{k}A_{k}$, where $g_{1},\ldots,g_{n}$ are i.i.d.~Gaussian random variables. Then
\[\mathbb{E}\mathrm{Tr}(X^{p})\leq 2^{p}\mathrm{Tr}\left(\sum_{k=1}^{n}A_{k}^{2}\right)^{\frac{p}{2}}+p^{4}\left(\max_{k\in[n]}\|A_{k}\|_{F}\right)^{2}\left\|\sum_{k=1}^{n}A_{k}^{2}\right\|\mathbb{E}\mathrm{Tr}(X^{p-4}),\]
for all even number $p\geq 4$.
\end{lemma}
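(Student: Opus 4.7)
The plan is to expand $\mathbb{E}\mathrm{Tr}(X^p)$ via the Wick formula given by Lemma \ref{gaussiantensor}, namely
\[
\mathbb{E}\mathrm{Tr}(X^p)=\sum_{\nu\in\mathbb{P}_{2}([p])}\sum_{\substack{f:[p]\to[n]\\f\sim\nu}}\mathrm{Tr}(A_{f(1)}\cdots A_{f(p)}),
\]
and then split the sum according to whether $\nu$ lies in $\mathrm{NC}_{2}([p])$ or $\mathrm{Cr}_{2}([p])$. The nonnegativity of the entries of each $A_k$ guarantees that every individual trace $\mathrm{Tr}(A_{f(1)}\cdots A_{f(p)})$ is itself nonnegative, which is essential for the overcounting step used on the crossing part.

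The noncrossing contribution is handled by Lemma \ref{buchholz}, which bounds each summand (in absolute value) by $\mathrm{Tr}(M^{p/2})$ with $M=\sum_{k}A_{k}^{2}$. Since $|\mathrm{NC}_{2}([p])|$ equals the Catalan number $C_{p/2}\leq 2^{p}$, the noncrossing contribution is at most $2^{p}\mathrm{Tr}(M^{p/2})$, which is exactly the first term of the claimed inequality.

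For the crossing part, I use that every $\nu\in\mathrm{Cr}_{2}([p])$ contains at least one crossing quadruple $a<b<c<d$ with $\{a,c\},\{b,d\}\in\nu$. Using nonnegativity of the traces to overcount,
\[
\sum_{\nu\in\mathrm{Cr}_{2}([p])}\sum_{f\sim\nu}\mathrm{Tr}(A_{f(1)}\cdots A_{f(p)})\leq \sum_{a<b<c<d}\sum_{\nu''\in\mathbb{P}_{2}([p]\setminus\{a,b,c,d\})}\sum_{f\sim\nu''\cup\{\{a,c\},\{b,d\}\}}\mathrm{Tr}(A_{f(1)}\cdots A_{f(p)}).
\]
Writing $f(a)=f(c)=k_{1}$ and $f(b)=f(d)=k_{2}$, the inner double sum over $\nu''$ and the remaining values of $f$ is precisely the Wick expansion produced by Lemma \ref{gaussiantensor} applied to the $p-4$ complementary positions, and therefore reassembles into
\[
\sum_{k_{1},k_{2}\in[n]}\mathbb{E}\,\mathrm{Tr}\bigl(X^{a-1}A_{k_{1}}X^{b-a-1}A_{k_{2}}X^{c-b-1}A_{k_{1}}X^{d-c-1}A_{k_{2}}X^{p-d}\bigr).
\]
Applying Lemma \ref{tracecross2} with $X_{i}=X$ for all $i\in[p-4]$ (legitimate since $p-4$ is even) bounds this by $(\max_{k}\|A_{k}\|_{F})^{2}\|M\|\,\mathbb{E}\mathrm{Tr}(X^{p-4})$. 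Summing over the $\binom{p}{4}\leq p^{4}$ choices of $(a,b,c,d)$ yields the second term of the inequality.

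The step that I expect to require the most care is the Wick reassembly: one must verify that after fixing the four positions $a,b,c,d$ with the matrices $A_{k_{1}},A_{k_{2}},A_{k_{1}},A_{k_{2}}$, summing over all pair partitions of the $p-4$ remaining positions together with all compatible labelings precisely recovers the expected trace with $X$ inserted at those positions. The nonnegativity hypothesis is exactly what makes the simple ``union over crossings'' bound work; without it one would have to canonically assign a single crossing quadruple to each $\nu\in\mathrm{Cr}_{2}([p])$ (doable, but more delicate), and one could no longer drop absolute values when passing between a sum of traces and an expected trace.
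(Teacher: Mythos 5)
Your proposal is correct and follows essentially the same route as the paper: Wick expansion via Lemma \ref{gaussiantensor}, the noncrossing part controlled by Lemma \ref{buchholz} with the $2^{p}$ count, the crossing part overcounted by summing over all crossing quadruples (valid precisely because nonnegativity makes every trace nonnegative), reassembly of the complementary positions into an expected trace via Lemma \ref{gaussiantensor}, and then Lemma \ref{tracecross2} with the $\binom{p}{4}\leq p^{4}$ count. Your closing remark about needing a canonical assignment of a crossing quadruple in the absence of nonnegativity is exactly what the paper does in the general case (Lemmas \ref{combinatorics}--\ref{recursion}).
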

\begin{proof}
By Lemma \ref{gaussiantensor},
\[\mathbb{E}(X^{p})=\sum_{\nu\in\mathbb{P}_{2}([p])}\sum_{\substack{f:[p]\to[n]\\f\sim\nu}}A_{f(1)}\ldots A_{f(p)},\]
so
\begin{align}\label{nonnegativerecursioneq1}
&\mathbb{E}\mathrm{Tr}(X^{p})\\=&
\sum_{\nu\in\mathbb{P}_{2}([p])}\sum_{\substack{f:[p]\to[n]\\f\sim\nu}}\mathrm{Tr}(A_{f(1)}\ldots A_{f(p)})\nonumber\\=&
\sum_{\nu\in\mathrm{NC}_{2}([p])}\sum_{\substack{f:[p]\to[n]\\f\sim\nu}}\mathrm{Tr}(A_{f(1)}\ldots A_{f(p)})+\sum_{\nu\in\mathrm{Cr}_{2}([p])}\sum_{\substack{f:[p]\to[n]\\f\sim\nu}}\mathrm{Tr}(A_{f(1)}\ldots A_{f(p)})\nonumber\\\leq&
2^{p}\mathrm{Tr}\left(\sum_{k=1}^{n}A_{k}^{2}\right)^{\frac{p}{2}}+\sum_{\nu\in\mathrm{Cr}_{2}([p])}\sum_{\substack{f:[p]\to[n]\\f\sim\nu}}\mathrm{Tr}(A_{f(1)}\ldots A_{f(p)}),\nonumber
\end{align}
where the last inequality follows from Lemma \ref{buchholz} and the fact that there at most $2^{p}$ noncrossing pair partitions of $[p]$. For every $\nu\in\mathrm{Cr}_{2}([p])$, there exist $i_{1}<i_{2}<i_{3}<i_{4}$ in $[p]$ such that $\{i_{1},i_{3}\},\{i_{2},i_{4}\}\in\nu$. So
\begin{equation}\label{nonnegativerecursioneq2}
\sum_{\nu\in\mathrm{Cr}_{2}([p])}\sum_{\substack{f:[p]\to[n]\\f\sim\nu}}\mathrm{Tr}(A_{f(1)}\ldots A_{f(p)})\leq
\sum_{i_{1}<\ldots<i_{4}\text{ in }[p]}\sum_{\substack{\nu\in\mathbb{P}_{2}([p])\\\{i_{1},i_{3}\},\{i_{2},i_{4}\}\in\nu}}\sum_{\substack{f:[p]\to[n]\\f\sim\nu}}\mathrm{Tr}(A_{f(1)}\ldots A_{f(p)}).
\end{equation}
Note that this is only an inequality since it involves some overcounting. We have also used the assumption that the entries of $A_{1},\ldots,A_{n}$ are nonnegative. Fix $i_{1}<i_{2}<i_{3}<i_{4}$ in $[p]$. We have
\begin{align*}
&\sum_{\substack{\nu\in\mathbb{P}_{2}([p])\\\{i_{1},i_{3}\},\{i_{2},i_{4}\}\in\nu}}\sum_{\substack{f:[p]\to[n]\\f\sim\nu}}\mathrm{Tr}(A_{f(1)}\ldots A_{f(p)})\\=&
\sum_{k_{1},k_{2}\in[n]}\sum_{\sigma\in\mathbb{P}_{2}([p]\backslash\{i_{1},\ldots,i_{4}\})}\sum_{\substack{f:[p]\backslash\{i_{1},\ldots,i_{4}\}\to[n]\\f\sim\sigma}}
\mathrm{Tr}(A_{f(1)}\ldots A_{f(i_{1}-1)}A_{k_{1}}A_{f(i_{1}+1)}\ldots A_{f(i_{2}-1)}A_{k_{2}}\\&
A_{f(i_{2}+1)}\ldots A_{f(i_{3}-1)}A_{k_{1}}A_{f(i_{3}+1)}\ldots A_{f(i_{4}-1)}A_{k_{2}}A_{f(i_{4}+1)}\ldots A_{f(p)}),
\end{align*}
via the identification $k_{1}=f(i_{1})=f(i_{3})$, $k_{2}=f(i_{2})=f(i_{4})$ and $\sigma=\nu|_{[p]\backslash\{i_{1},\ldots,i_{4}\}}$. Thus, by Lemma \ref{gaussiantensor},
\begin{align*}
&\sum_{\substack{\nu\in\mathbb{P}_{2}([p])\\\{i_{1},i_{3}\},\{i_{2},i_{4}\}\in\nu}}\sum_{\substack{f:[p]\to[n]\\f\sim\nu}}\mathrm{Tr}(A_{f(1)}\ldots A_{f(p)})\\=&
\sum_{k_{1},k_{2}\in[n]}\mathbb{E}\mathrm{Tr}(X^{i_{1}-1}A_{k_{1}}X^{i_{2}-i_{1}-1}A_{k_{2}}X^{i_{3}-i_{2}-1}A_{k_{1}}X^{i_{4}-i_{3}-1}A_{k_{2}}X^{p-i_{4}}).
\end{align*}
By Lemma \ref{tracecross2}, this is at most $\left(\max_{k\in[n]}\|A_{k}\|_{F}\right)^{2}\left\|\sum_{k=1}^{n}A_{k}^{2}\right\|\mathbb{E}\mathrm{Tr}(X^{p-4})$. Thus, by (\ref{nonnegativerecursioneq2}),
\[\sum_{\nu\in\mathrm{Cr}_{2}([p])}\sum_{\substack{f:[p]\to[n]\\f\sim\nu}}\mathrm{Tr}(A_{f(1)}\ldots A_{f(p)})\leq p^{4}\left(\max_{k\in[n]}\|A_{k}\|_{F}\right)^{2}\left\|\sum_{k=1}^{n}A_{k}^{2}\right\|\mathbb{E}\mathrm{Tr}(X^{p-4}).\]
By (\ref{nonnegativerecursioneq1}), the result follows.
\end{proof}
\begin{proof}[Proof of the second statement of Theorem \ref{main2}]
Without loss of generality, we may assume that $A_{1},\ldots,A_{n}$ are self-adjoint by replacing each $A_{k}$ by the self-adjoint matrix $\begin{bmatrix}0&A_{k}\\A_{k}^{*}&0\end{bmatrix}$. By Lemma \ref{nonnegativerecursion}, for all even number $4\leq p\leq\log d$,
\[\mathbb{E}\mathrm{Tr}(X^{p})\leq d\cdot 2^{p}\left\|\sum_{k=1}^{n}A_{k}^{2}\right\|^{\frac{p}{2}}+(\log d)^{4}\left(\max_{k\in[n]}\|A_{k}\|_{F}\right)^{2}\left\|\sum_{k=1}^{n}A_{k}^{2}\right\|\mathbb{E}\mathrm{Tr}(X^{p-4}).\]
Let $b_{1}=2\|\sum_{k=1}^{n}A_{k}^{2}\|^{\frac{1}{2}}$ and $b_{2}=(\log d)^{4}(\max_{k\in[n]}\|A_{k}\|_{F})^{2}\|\sum_{k=1}^{n}A_{k}^{2}\|$. For $p\leq\log d$, let $a_{p}=\mathbb{E}\mathrm{Tr}(X^{p})$. Then $a_{p}\leq d\cdot b_{1}^{p}+b_{2}a_{p-4}$, for all even number $4\leq p\leq\log d$, and $a_{0}=d$. Thus, for all $p\leq\log d$ with $p$ divisible by $4$, we have
\[a_{p}\leq d(b_{1}^{p}+b_{2}b_{1}^{p-4}+b_{2}^{2}b_{1}^{p-8}+\ldots+b_{2}^{\frac{p}{4}-1}b_{1}^{4}+b_{2}^{\frac{p}{4}}),\]
so by Young's inequality, $a_{p}\leq d(\frac{p}{4}+1)(b_{1}^{p}+b_{2}^{\frac{p}{4}})$. Since $\mathbb{E}\|X\|\leq(\mathbb{E}\mathrm{Tr}(X^{p}))^{\frac{1}{p}}=a_{p}^{\frac{1}{p}}$, taking $p$ to be the largest number divisible by $4$ and such that $p\leq\log d$, we obtain
\[\mathbb{E}\|X\|\lesssim b_{1}+b_{2}^{\frac{1}{4}}\lesssim\left\|\sum_{k=1}^{n}A_{k}^{2}\right\|^{\frac{1}{2}}+(\log d)(\max_{k\in[n]}\|A_{k}\|_{F})^{\frac{1}{2}}\left\|\sum_{k=1}^{n}A_{k}^{2}\right\|^{\frac{1}{4}}.\]
But $(\log d)(\max_{k\in[n]}\|A_{k}\|_{F})^{\frac{1}{2}}\|\sum_{k=1}^{n}A_{k}^{2}\|^{\frac{1}{4}}\leq\|\sum_{k=1}^{n}A_{k}^{2}\|^{\frac{1}{2}}+(\log d)^{2}\max_{k\in[n]}\|A_{k}\|_{F}$. Thus, the result follows.
\end{proof}
\subsection{Proof of the first statement of Theorem \ref{main2}}
Recall the notation at the beginning of Section \ref{tensorsection}
\begin{lemma}\label{combinatorics}
Assume that $p\in\mathbb{N}$ is even. There exists $\phi:\mathrm{Cr}_{2}([p])\to\{\mathrm{Partitions}~\mathrm{of}~[p]\}$ such that
\begin{enumerate}[(1)]
\item $\nu\leq\phi(\nu)$ for all $\nu\in\mathrm{Cr}_{2}([p])$;
\item whenever $\nu,\widehat{\nu}\in\mathrm{Cr}_{2}([p])$ satisfy $\widehat{\nu}\leq\phi(\nu)$, we have $\phi(\nu)=\phi(\widehat{\nu})$;
\item for every $\sigma\in\mathrm{ran}~\phi$, there exist $i_{1}<i_{2}<i_{3}<i_{4}$ in $[p]$ such that $\{i_{1},i_{3}\},\{i_{2},i_{4}\}\in\sigma$;
\item $\mathrm{ran}~\phi$ has at most $4^{p}p^{2}$ elements.
\end{enumerate}
\end{lemma}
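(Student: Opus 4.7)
The plan is to associate to each $\nu\in\mathrm{Cr}_{2}([p])$ a canonical crossing read off from the leftmost pair-closing that creates a crossing, and then to define $\phi(\nu)$ so as to retain exactly the information needed to recover this crossing from any crossing-pair-partition refinement. Concretely: let $i_{3}=i_{3}(\nu)$ be the smallest index at which the $\nu$-pair closing there crosses an open pair; let $\{i_{1},i_{3}\}\in\nu$ be that pair; let $i_{2}$ be the smallest element of $(i_{1},i_{3})$ whose $\nu$-partner exceeds $i_{3}$; and let $i_{4}$ be that partner. By minimality of $i_{3}$, the pairs of $\nu$ fully in $[i_{3}-1]$ form a non-crossing pair partition $P_{1}$ and no pair of $P_{1}$ is crossed by any \emph{straddling} pair (a pair $\{a,b\}\in\nu$ with $a<i_{3}<b$); this structural observation is what drives the rest of the argument.

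Define $\sigma:=\phi(\nu)$ to be the partition of $[p]$ whose pair blocks are $\{i_{1},i_{3}\}$, $\{i_{2},i_{4}\}$, the pairs of $P_{1}$, and all other straddling pairs of $\nu$, and whose one remaining block is $B:=\mathrm{supp}(P_{3})$, the union of supports of pairs of $\nu$ fully contained in $\{i_{3}+1,\ldots,p\}$. Conditions (1) and (3) follow directly: every $\nu$-pair is a pair block of $\sigma$ or lies inside $B$, and $\{i_{1},i_{3}\}$, $\{i_{2},i_{4}\}$ form a crossing in $\sigma$. For (2), any $\widehat\nu\in\mathrm{Cr}_{2}([p])$ with $\widehat\nu\leq\sigma$ must contain every size-two pair block of $\sigma$ as a pair block of its own (as $\widehat\nu$'s blocks also have size two and must refine $\sigma$'s), so $\widehat\nu$ coincides with $\nu$ on $[p]\setminus B$. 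Applying the same canonical rule to $\widehat\nu$ therefore produces the same quadruple, the same $P_{1}$, and the same family of straddling pairs, yielding $\phi(\widehat\nu)=\sigma$.

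The main obstacle is the counting bound (4). Each $\sigma$ in the range is determined by (a) the two positions $i_{1},i_{3}$, contributing a factor of at most $p^{2}$; (b) the non-crossing arches-plus-half-arches structure on $[i_{3}]$ encoding $P_{1}\cup\{\{i_{1},i_{3}\}\}$ together with the set $O$ of left endpoints of straddling pairs, a Catalan/Motzkin-type count bounded by $4^{i_{3}}$; and (c) the data on $\{i_{3}+1,\ldots,p\}$ specifying which positions are right endpoints of straddling pairs, which lie in $B$, and the bijection matching $O$ with those right endpoints. The hardest step is bounding (c), because a naive enumeration of the bijection between $O$ and the right endpoints costs a factorial factor. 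The key technical step will be to package (c) as a Dyck/Motzkin-type word on $\{i_{3}+1,\ldots,p\}$ of length $p-i_{3}$ over a constant-size alphabet whose parenthesis matching simultaneously marks each right endpoint and records, via a canonical rule inherited from the non-crossing structure on the left side of $i_{3}$, the element of $O$ it is paired with; this should contribute at most $4^{p-i_{3}}$, producing the combined bound $|\mathrm{ran}\,\phi|\leq p^{2}\cdot 4^{p}$.
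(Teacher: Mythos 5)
Your overall strategy (scan left to right for the first crossing, keep a canonical prefix of $\nu$ together with a distinguished crossing quadruple, and lump the remainder into a single block) is the same as the paper's, and your verifications of (1)--(3) are essentially sound: the size-two blocks of $\sigma$ force any refinement $\widehat\nu\in\mathrm{Cr}_2([p])$ to agree with $\nu$ off $B$, and since $B\subseteq\{i_3+1,\ldots,p\}$ the left-to-right scan produces the same $i_1,i_2,i_3,i_4$, the same $P_1$ and the same straddling pairs for $\widehat\nu$. The gap is in (4), precisely the step you left as a sketch, and it cannot be repaired for the map $\phi$ as you have defined it: because $\sigma$ retains \emph{every} straddling pair as a separate block, the range of your $\phi$ is superexponential. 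Concretely, take $p=2m+4$ and let $\nu_\pi$ consist of the pair $\{1,m+3\}$ together with the pairs $\{k,\pi(k)\}$ for $k=2,\ldots,m+2$, where $\pi$ is an arbitrary bijection from $\{2,\ldots,m+2\}$ onto $\{m+4,\ldots,2m+4\}$. Every right endpoint other than $m+3$ exceeds $m+3$, so your rule gives $i_3=m+3$, $i_1=1$, $i_2=2$, $i_4=\pi(2)$, $P_1=\emptyset$, $B=\emptyset$, and hence $\phi(\nu_\pi)=\nu_\pi$. These are $(m+1)!=\bigl(\tfrac{p-2}{2}\bigr)!$ distinct partitions in $\mathrm{ran}\,\phi$, which exceeds $4^pp^2$ for large $p$. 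The example also shows why no ``canonical rule inherited from the non-crossing structure on the left side of $i_3$'' can recover the matching: your minimality of $i_3$ only forbids crossings detected at right endpoints below $i_3$, and crossings among straddling pairs are detected at right endpoints above $i_3$, so the matching between the left endpoints in $[i_3-1]$ and the right endpoints in $\{i_3+1,\ldots,p\}$ is essentially unconstrained.

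The paper avoids this by keeping strictly less information. It sets $S(\nu,k)$ to be the union of the $\nu$-blocks meeting $[k]$, lets $k_\nu$ be the largest $k$ with $\nu|_{S(\nu,k)}$ noncrossing, and defines $\phi(\nu)=(\nu|_{S(\nu,k_\nu+1)})\cup\{[p]\setminus S(\nu,k_\nu+1)\}$. The crucial difference is that the retained pairs are only those meeting the initial segment $[k_\nu+1]$, and the noncrossing requirement on $\nu|_{S(\nu,k_\nu)}$ forces the retained straddling pairs to be mutually nested, so this prefix is one of at most Catalan-many (hence $\leq 2^p$) noncrossing pair partitions on one of at most $2^p$ subsets, augmented by a single additional pair (at most $p^2$ choices, since $S(\nu,k_\nu+1)\setminus S(\nu,k_\nu)$ has at most two elements). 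In your example the paper's map records only $\{1,m+3\}$ and $\{2,\pi(2)\}$ and forgets the rest of $\pi$, which is exactly the compression your construction is missing. If you want to salvage your version, you would have to move the supports of all straddling pairs other than $\{i_2,i_4\}$ into the big block, and then recheck property (2), which is no longer automatic.
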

\begin{proof}
For $\nu\in\mathrm{Cr}_{2}([p])$ and $k\in[p]$, let
\[S(\nu,k)=\{j\in[p]|\,j\stackrel{\nu}{\sim}i\text{ for some }i\in[k]\}.\]
Clearly $S(\nu,k)$ splits $\nu$ for all $k\in[p]$ and $\nu\in\mathrm{Cr}_{2}([p])$.

For every $\nu\in\mathrm{Cr}_{2}([p])$, let $k_{\nu}$ be the largest $k\in[p]$ for which $\nu|_{S(\nu,k)}$ is noncrossing. Take
\[\phi(\nu)=(\nu|_{S(\nu,k_{\nu}+1)})\cup\{[p]\backslash S(\nu,k_{\nu}+1)\},\]
for $\nu\in\mathrm{Cr}_{2}([p])$.

(1): Since $S(\nu,k_{\nu}+1)$ splits $\nu$, we have $\nu\leq\phi(\nu)$ for all $\nu\in\mathrm{Cr}_{2}([p])$. This proves (1).

(2): Suppose that $\nu,\widehat{\nu}\in\mathrm{Cr}_{2}([p])$ and $\widehat{\nu}\leq\phi(\nu)$. Then
\[\widehat{\nu}\leq(\nu|_{S(\nu,k_{\nu}+1)})\cup\{[p]\backslash S(\nu,k_{\nu}+1)\}\leq\{S(\nu,k_{\nu}+1),[p]\backslash S(\nu,k_{\nu}+1)\}.\]
Thus $S(\nu,k_{\nu}+1)$ splits $\widehat{\nu}$. Taking restriction to $S(\nu,k_{\nu}+1)$ in the first inequality, we obtain $\widehat{\nu}|_{S(\nu,k_{\nu}+1)}\leq\nu|_{S(\nu,k_{\nu}+1)}$. Since $S(\nu,k_{\nu}+1)$ splits each of $\nu$ and $\widehat{\nu}$ and since each of $\nu$ and $\widehat{\nu}$ are pair partitions, the restrictions $\nu|_{S(\nu,k_{\nu}+1)}$ and $\widehat{\nu}|_{S(\nu,k_{\nu}+1)}$ are still pair partitions. Thus, the only way $\widehat{\nu}|_{S(\nu,k_{\nu}+1)}\leq\nu|_{S(\nu,k_{\nu}+1)}$ can happen is when $\widehat{\nu}|_{S(\nu,k_{\nu}+1)}=\nu|_{S(\nu,k_{\nu}+1)}$. So we have $\widehat{\nu}|_{S(\nu,k_{\nu}+1)}=\nu|_{S(\nu,k_{\nu}+1)}$.

To show that $\phi(\widehat{\nu})=\phi(\nu)$, it remains to show that $S(\widehat{\nu},k_{\widehat{\nu}}+1)=S(\nu,k_{\nu}+1)$. First we show that
\begin{equation}\label{combinatoricseq1}
S(\widehat{\nu},k)=S(\nu,k)~\text{for all }k\in[k_{\nu}+1].
\end{equation}
Recall that we have proved that $S(\nu,k_{\nu}+1)$ splits $\widehat{\nu}$ and $\widehat{\nu}|_{S(\nu,k_{\nu}+1)}=\nu|_{S(\nu,k_{\nu}+1)}$. We will use repeatedly use these in the next few paragraphs.

Fix $k\in[k_{\nu}+1]$. If $j\in S(\widehat{\nu},k)$, i.e., $j\stackrel{\widehat{\nu}}{\sim}i$ for some $i\in[k]$, then $i\in[k_{\nu}+1]\subset S(\nu,k_{\nu}+1)$ so since $S(\nu,k_{\nu}+1)$ splits $\widehat{\nu}$, it follows that
$j\in S(\nu,k_{\nu}+1)$. Since $i,j\in S(\nu,k_{\nu}+1)$, $j\stackrel{\widehat{\nu}}{\sim}i$ and $\widehat{\nu}|_{S(\nu,k_{\nu}+1)}=\nu|_{S(\nu,k_{\nu}+1)}$, we have $j\stackrel{\nu}{\sim}i$. So $j\in S(\nu,k)$. Thus, $S(\widehat{\nu},k)\subset S(\nu,k)$.

Conversely, if $j\in S(\nu,k)$ then $j\stackrel{\nu}{\sim}i$ for some $i\in[k]$. Thus $i\in[k_{\nu}+1]$ so $j\in S(\nu,k_{\nu}+1)$ by definition of $S(\nu,k_{\nu}+1)$. Since $i,j\in S(\nu,k_{\nu}+1)$, $j\stackrel{\nu}{\sim}i$ and $\widehat{\nu}|_{S(\nu,k_{\nu}+1)}=\nu|_{S(\nu,k_{\nu}+1)}$, we have $j\stackrel{\widehat{\nu}}{\sim}i$. So $j\in S(\widehat{\nu},k)$. Thus, $S(\nu,k)\subset S(\widehat{\nu},k)$. This proves (\ref{combinatoricseq1}).

Since $\widehat{\nu}|_{S(\nu,k_{\nu}+1)}=\nu|_{S(\nu,k_{\nu}+1)}$, we have $\widehat{\nu}|_{S(\nu,k)}=\nu|_{S(\nu,k)}$, for all $k\in[k_{\nu}+1]$, since $S(\nu,k)\subset S(\nu,k_{\nu}+1)$. So by (\ref{combinatoricseq1}), $\widehat{\nu}|_{S(\widehat{\nu},k)}=\nu|_{S(\nu,k)}$, for all $k\in[k_{\nu}+1]$, where the restriction on the left hand side becomes $S(\widehat{\nu},k)$. Thus, by definition of $k_{\nu}$, we have that $\widehat{\nu}|_{S(\widehat{\nu},k_{\nu})}=\nu|_{S(\nu,k_{\nu})}$ is noncrossing and $\widehat{\nu}|_{S(\widehat{\nu},k_{\nu}+1)}=\nu|_{S(\nu,k_{\nu}+1)}$ is crossing. So by definition of $k_{\widehat{\nu}}$, we have $k_{\widehat{\nu}}=k_{\nu}$. So \[S(\widehat{\nu},k_{\widehat{\nu}}+1)=S(\widehat{\nu},k_{\nu}+1)=S(\nu,k_{\nu}+1),\]
by (\ref{combinatoricseq1}). This proves the remaining thing needed to obtain $\phi(\widehat{\nu})=\phi(\nu)$ as mentioned above.

(3): Let $\nu\in\mathrm{Cr}_{2}([p])$. By definition of $k_{\nu}$, the partition $\nu|_{S(\nu,k_{\nu}+1)}$ is crossing. Since $S(\nu,k_{\nu}+1)$ splits $\nu$ and $\nu$ is a pair partition, $\nu|_{S(\nu,k_{\nu}+1)}$ is still a pair partition. Thus, $\nu|_{S(\nu,k_{\nu}+1)}$ is a crossing pair partition. So there exist $i_{1}<i_{2}<i_{3}<i_{4}$ in $[p]$ such that $\{i_{1},i_{3}\},\{i_{2},i_{4}\}\in\nu|_{S(\nu,k_{\nu}+1)}$. So $\{i_{1},i_{3}\},\{i_{2},i_{4}\}\in\phi(\nu)$. This proves (3).

(4): For every $\nu\in\mathrm{Cr}_{2}([p])$,
\[\phi(\nu)=(\nu|_{S(\nu,k_{\nu})})\cup(\nu|_{S(\nu,k_{\nu}+1)\backslash S(\nu,k_{\nu})})\cup\{[p]\backslash S(\nu,k_{\nu}+1)\}.\]
Since $\nu$ is a pair partition, $S(\nu,k+1)\backslash S(\nu,k)$ has at most $2$ elements for every $k\in[p]$, namely, $k+1$ and another one in the same $\nu$-block as $k+1$.

There are at most $2^{p}$ sets of the form $S(\nu,k_{\nu})$ for some $\nu\in\mathrm{Cr}_{2}([p])$.\\
For each fixed $S(\nu,k_{\nu})$, there are at most $2^{p}$ possible noncrossing pair partitions $\nu|_{S(\nu,k_{\nu})}$.\\
There are at most $p^{2}$ choices of $S(\nu,k_{\nu}+1)\backslash S(\nu,k_{\nu})$ and\\
with $S(\nu,k_{\nu}+1)\backslash S(\nu,k_{\nu})$ being fixed, there is only choice of $\nu|_{S(\nu,k_{\nu}+1)\backslash S(\nu,k_{\nu})}$.

Therefore, there are at most $2^{p}\cdot 2^{p}\cdot p^{2}$ partitions of the form $\phi(\nu)$ for some $\nu\in\mathrm{Cr}_{2}([p])$.
\end{proof}
\begin{lemma}\label{combinatorics2}
Assume that $p\in\mathbb{N}$ is even. There exist partitions $\nu_{1},\ldots,\nu_{q}$ of $[p]$ such that
\begin{enumerate}[(1)]
\item every $\nu\in\mathrm{Cr}_{2}([p])$ is in exactly one of the sets\\
$\{\nu\in\mathrm{Cr}_{2}([p])|\,\nu\leq\nu_{1}\},\ldots,\{\nu\in\mathrm{Cr}_{2}([p])|\,\nu\leq\nu_{q}\}$.
\item for every $t\in[q]$, there exist $i_{1}<i_{2}<i_{3}<i_{4}$ in $[p]$ such that $\{i_{1},i_{3}\},\{i_{2},i_{4}\}\in\nu_{t}$.
\item $q\leq 4^{p}p^{2}$.
\end{enumerate}
\end{lemma}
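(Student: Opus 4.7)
The plan is to take $\nu_1,\ldots,\nu_q$ to be exactly the elements of $\mathrm{ran}\,\phi$, where $\phi$ is the map furnished by Lemma \ref{combinatorics}. This is the only natural candidate: properties (3) and (4) of Lemma \ref{combinatorics} immediately give items (2) and (3) of the present statement (any $\sigma\in\mathrm{ran}\,\phi$ has a nested pair of crossing blocks, and $|\mathrm{ran}\,\phi|\leq 4^{p}p^{2}$), so the only real content is verifying item (1), namely that every $\nu\in\mathrm{Cr}_{2}([p])$ lies below exactly one $\nu_{t}$.

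The existence half is free: given $\nu\in\mathrm{Cr}_{2}([p])$, property (1) of Lemma \ref{combinatorics} says $\nu\leq\phi(\nu)$, and $\phi(\nu)=\nu_{t}$ for some index $t$ by construction. For uniqueness, suppose $\nu\leq\nu_{s}$ and $\nu\leq\nu_{t}$ for some $s,t\in[q]$. Each $\nu_{s}$ and $\nu_{t}$ is the image under $\phi$ of some crossing pair partition, say $\nu_{s}=\phi(\widehat{\nu}_{s})$ and $\nu_{t}=\phi(\widehat{\nu}_{t})$. Applying property (2) of Lemma \ref{combinatorics} (where the ``$\widehat{\nu}$'' in that statement is our $\nu\in\mathrm{Cr}_{2}([p])$ and ``$\nu$'' is $\widehat{\nu}_{s}$), the hypothesis $\nu\leq\phi(\widehat{\nu}_{s})$ forces $\phi(\widehat{\nu}_{s})=\phi(\nu)$, so $\nu_{s}=\phi(\nu)$. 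The same argument gives $\nu_{t}=\phi(\nu)$, hence $\nu_{s}=\nu_{t}$, and uniqueness follows.

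The main difficulty of this lemma is entirely absorbed into Lemma \ref{combinatorics}: once the map $\phi$ with its ``fiber-constancy'' property (2) is in hand, the present lemma is essentially a repackaging that reformulates $\phi$ as a partitioning of $\mathrm{Cr}_{2}([p])$ by the cells $\{\nu:\nu\leq\nu_{t}\}$, well-suited for a union bound over a manageable number of ``bad'' partitions. I would therefore keep the proof very short, cite items (1)--(4) of Lemma \ref{combinatorics} once each, and explicitly spell out only the one-line application of property (2) that upgrades ``$\nu\leq\nu_{s}$'' to ``$\nu_{s}=\phi(\nu)$,'' since this is the single step where the combinatorial work of the previous lemma is actually being used.
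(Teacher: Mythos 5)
Your proposal is correct and is exactly the paper's approach: the paper's proof is the one-line statement that the lemma follows from Lemma \ref{combinatorics} by enumerating $\mathrm{ran}\,\phi$ as $\nu_{1},\ldots,\nu_{q}$. You simply spell out the (correct) verification of the uniqueness half of item (1) via property (2) of Lemma \ref{combinatorics}, which the paper leaves implicit.
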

\begin{proof}
This follows from Lemma \ref{combinatorics} by enumerating the range of $\phi$ as $\nu_{1},\ldots,\nu_{q}$.
\end{proof}
\begin{lemma}\label{recursion}
Suppose that $g_{1},\ldots,g_{n}$ are i.i.d.~standard Gaussian random variables, $A_{1},\ldots,A_{n}\in M_{d}(\mathbb{R})$ are self-adjoint and $\mathrm{Tr}(A_{k_{1}}A_{k_{2}})=0$ for all $k_{1}\neq k_{2}$ in $[n]$. Let $X=\sum_{k=1}^{n}g_{k}A_{k}$, where $g_{1},\ldots,g_{n}$ are i.i.d.~Gaussian random variables. Then
\[\mathbb{E}\mathrm{Tr}(X^{p})\leq 2^{p}\mathrm{Tr}\left(\sum_{k=1}^{n}A_{k}^{2}\right)^{\frac{p}{2}}+8^{p}\left(\max_{i\in[n]}\|A_{i}\|_{F}\right)^{2}\left\|\sum_{k=1}^{n}A_{k}^{2}\right\|\mathbb{E}\mathrm{Tr}(X^{p-4}),\]
for all even number $p\geq 4$.
\end{lemma}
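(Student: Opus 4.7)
The plan is to mirror the structure of the proof of Lemma \ref{nonnegativerecursion}, but replace the overcounting step (which exploited nonnegativity of entries) by the combinatorial decomposition of Lemma \ref{combinatorics2}. By Lemma \ref{gaussiantensor},
\[\mathbb{E}\mathrm{Tr}(X^{p}) = \sum_{\nu \in \mathrm{NC}_{2}([p])} \sum_{\substack{f:[p]\to[n]\\ f \sim \nu}} \mathrm{Tr}(A_{f(1)} \cdots A_{f(p)}) + \sum_{\nu \in \mathrm{Cr}_{2}([p])} \sum_{\substack{f:[p]\to[n]\\ f \sim \nu}} \mathrm{Tr}(A_{f(1)} \cdots A_{f(p)}).\]
The noncrossing sum is handled exactly as before: Lemma \ref{buchholz} bounds each inner sum by $\mathrm{Tr}((\sum_k A_k^2)^{p/2})$, and the Catalan-type count $|\mathrm{NC}_2([p])| \le 2^p$ yields the first term $2^p \mathrm{Tr}((\sum_k A_k^2)^{p/2})$ in the stated inequality.

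For the crossing sum, I would apply Lemma \ref{combinatorics2} to partition $\mathrm{Cr}_2([p])$ into classes $\{\nu \in \mathrm{Cr}_2([p]) : \nu \leq \nu_t\}$ for $t \in [q]$ with $q \le 4^{p} p^{2}$, where each $\nu_t$ has two blocks $\{i_1,i_3\}, \{i_2,i_4\}$ with $i_1<i_2<i_3<i_4$. Since these are size-$2$ blocks of $\nu_t$ and any $\nu \le \nu_t$ in $\mathbb{P}_2([p])$ must refine $\nu_t$ into pairs, every such $\nu$ automatically contains $\{i_1,i_3\}$ and $\{i_2,i_4\}$ (hence is crossing, which is how the classes still cover everything). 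Writing $k_1 = f(i_1) = f(i_3)$, $k_2 = f(i_2) = f(i_4)$, and $S = [p] \setminus \{i_1,i_2,i_3,i_4\}$, the class-$t$ contribution is
\[\sum_{k_1,k_2 \in [n]} \sum_{\substack{\nu' \in \mathbb{P}_2(S)\\ \nu' \le \nu_t|_S}} \sum_{\substack{f':S\to[n]\\ f' \sim \nu'}} \mathrm{Tr}\bigl(A_{f'(1)}\cdots A_{k_1}\cdots A_{k_2}\cdots A_{k_1}\cdots A_{k_2}\cdots A_{f'(p)}\bigr).\]
Applying Lemma \ref{gaussiantensor2} to the partition $\nu_t|_S$ of $S$, composed with the multilinear map on $V^{|S|}$ that inserts the fixed $A_{k_1}, A_{k_2}$ at the four positions and takes the trace, the inner double sum over $(\nu', f')$ becomes $\mathbb{E}\mathrm{Tr}(\tilde X_1 \cdots A_{k_1} \cdots A_{k_2} \cdots A_{k_1} \cdots A_{k_2} \cdots \tilde X_p)$, where each $\tilde X_j$ (for $j \in S$) is a real self-adjoint Gaussian series with the same marginal law as $X$. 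This is exactly the setting of Lemma \ref{tracecross2} with $p_5 = p - 4$, giving the per-class bound $(\max_k \|A_k\|_F)^2 \|\sum_k A_k^2\| \, \mathbb{E}\mathrm{Tr}(X^{p-4})$. Summing over the $q \le 4^p p^2$ classes and using $p^2 \le 2^p$ for $p \ge 4$ produces the factor $8^p$ in the statement.

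The main obstacle is the step that combines Lemma \ref{gaussiantensor2} (a tensor-product identity, proved by reducing to interval partitions and using block independence) with Lemma \ref{tracecross2} (a trace-of-matrix-product bound, sensitive to the order of factors and relying on the Tropp unitary-averaging trick of Lemma \ref{tropplemma}). The right reconciliation is to view the inner sum as the image, under a multilinear function, of the tensor identity of Lemma \ref{gaussiantensor2}; the resulting $\tilde X_j$'s are correlated according to $\nu_t|_S$ but each is individually distributed as $X$ (hence real self-adjoint with all moments finite), which is exactly what Lemma \ref{tracecross2} requires. Once this bookkeeping is set up, everything else is a direct accounting of the bounds above.
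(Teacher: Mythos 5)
Your proposal is correct and follows essentially the same route as the paper: decompose via Lemma \ref{gaussiantensor}, bound the noncrossing part with Lemma \ref{buchholz}, partition the crossing pair partitions using Lemma \ref{combinatorics2}, convert each class into an expectation via Lemma \ref{gaussiantensor2}, and bound it with Lemma \ref{tracecross2}. You also correctly identify the two points the paper relies on implicitly: that every pair partition refining $\nu_{t}$ automatically contains the crossing blocks $\{i_{1},i_{3}\},\{i_{2},i_{4}\}$ (so the class over $\mathrm{Cr}_{2}$ coincides with the class over $\mathbb{P}_{2}$, as needed for Lemma \ref{gaussiantensor2}), and that Lemma \ref{tracecross2} only needs each $\widetilde{X}_{j}$ to be marginally distributed as $X$, not independence.
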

\begin{proof}
By Lemma \ref{gaussiantensor},
\[\mathbb{E}(X^{p})=\sum_{\nu\in\mathbb{P}_{2}([p])}\sum_{\substack{f:[p]\to[n]\\f\sim\nu}}A_{f(1)}\ldots A_{f(p)},\]
so
\begin{align}\label{recursioneq1}
&\mathbb{E}\mathrm{Tr}(X^{p})\\=&
\sum_{\nu\in\mathbb{P}_{2}([p])}\sum_{\substack{f:[p]\to[n]\\f\sim\nu}}\mathrm{Tr}(A_{f(1)}\ldots A_{f(p)})\nonumber\\\leq&
\left|\sum_{\nu\in\mathrm{NC}_{2}([p])}\sum_{\substack{f:[p]\to[n]\\f\sim\nu}}\mathrm{Tr}(A_{f(1)}\ldots A_{f(p)})\right|+\left|\sum_{\nu\in\mathrm{Cr}_{2}([p])}\sum_{\substack{f:[p]\to[n]\\f\sim\nu}}\mathrm{Tr}(A_{f(1)}\ldots A_{f(p)})\right|\nonumber\\\leq&
2^{p}\mathrm{Tr}\left(\sum_{k=1}^{n}A_{k}^{2}\right)^{\frac{p}{2}}+\left|\sum_{\nu\in\mathrm{Cr}_{2}([p])}\sum_{\substack{f:[p]\to[n]\\f\sim\nu}}\mathrm{Tr}(A_{f(1)}\ldots A_{f(p)})\right|,\nonumber
\end{align}
where the last inequality follows from Lemma \ref{buchholz} and the fact that there at most $2^{p}$ noncrossing pair partitions of $[p]$. Let $\nu_{1},\ldots,\nu_{q}$ be obtained from Lemma \ref{combinatorics2} with $q\leq 4^{p}p^{2}$. Since every $\nu\in\mathrm{Cr}_{2}([p])$ satisfies $\nu\leq\nu_{t}$ for exactly one $t\in[q]$,
\begin{equation}\label{recursioneq2}
\sum_{\nu\in\mathrm{Cr}_{2}([p])}\sum_{\substack{f:[p]\to[n]\\f\sim\nu}}\mathrm{Tr}(A_{f(1)}\ldots A_{f(p)})=\sum_{t=1}^{q}\sum_{\substack{\nu\in\mathrm{Cr}_{2}([p])\\\nu\leq\nu_{t}}}\sum_{\substack{f:[p]\to[n]\\f\sim\nu}}\mathrm{Tr}(A_{f(1)}\ldots A_{f(p)}).
\end{equation}
Fix $t\in[q]$. By the properties of $\nu_{t}$ from Lemma \ref{combinatorics2}, there exist $i_{1}<i_{2}<i_{3}<i_{4}$ in $[p]$ such that $\{i_{1},i_{3}\},\{i_{2},i_{4}\}\in\nu_{t}$. For every $\nu\in\mathrm{Cr}_{2}([p])$ such that $\nu\leq\nu_{t}$, since $\{i_{1},i_{3}\},\{i_{2},i_{4}\}\in\nu_{t}$ and $\nu$ is a pair partition, we have $\{i_{1},i_{3}\},\{i_{2},i_{4}\}\in\nu$. Let $\omega_{t}=\nu_{t}|_{[p]\backslash\{i_{1},\ldots,i_{4}\}}$. We have
\begin{align*}
&\sum_{\substack{\nu\in\mathrm{Cr}_{2}([p])\\\nu\leq\nu_{t}}}\sum_{\substack{f:[p]\to[n]\\f\sim\nu}}\mathrm{Tr}(A_{f(1)}\ldots A_{f(p)})\\=&
\sum_{k_{1},k_{2}\in[n]}\sum_{\substack{\sigma\in\mathbb{P}_{2}([p]\backslash\{i_{1},\ldots,i_{4}\})\\\sigma\leq\omega_{t}}}\sum_{\substack{f:[p]\backslash\{i_{1},\ldots,i_{4}\}\to[n]\\f\sim\sigma}}
\mathrm{Tr}(A_{f(1)}\ldots A_{f(i_{1}-1)}A_{k_{1}}A_{f(i_{1}+1)}\ldots A_{f(i_{2}-1)}A_{k_{2}}\\&
A_{f(i_{2}+1)}\ldots A_{f(i_{3}-1)}A_{k_{1}}A_{f(i_{3}+1)}\ldots A_{f(i_{4}-1)}A_{k_{2}}A_{f(i_{4}+1)}\ldots A_{f(p)}),
\end{align*}
via the identification $k_{1}=f(i_{1})=f(i_{3})$, $k_{2}=f(i_{2})=f(i_{4})$ and $\sigma=\nu|_{[p]\backslash\{i_{1},\ldots,i_{4}\}}$. Thus, by Lemma \ref{gaussiantensor2},
\begin{align}\label{recursioneq3}
&\sum_{\substack{\nu\in\mathbb{P}_{2}([p])\\\nu\leq\nu_{t}}}\sum_{\substack{f:[p]\to[n]\\f\sim\nu}}\mathrm{Tr}(A_{f(1)}\ldots A_{f(p)})\\=&
\sum_{k_{1},k_{2}\in[n]}\mathbb{E}\mathrm{Tr}\left((\prod_{i=1}^{i_{1}-1}X_{i})A_{k_{1}}(\prod_{i=i_{1}+1}^{i_{2}-1}X_{i})A_{k_{2}}(\prod_{i=i_{2}+1}^{i_{3}-3}X_{i})A_{k_{1}}(\prod_{i=i_{3}+1}^{i_{4}}X_{i})A_{k_{2}}(\prod_{i=i_{4}+1}^{p}X_{i})\right),\nonumber
\end{align}
for some random matrices $X_{1},\ldots,X_{p}$ (with $X_{i_{1}},X_{i_{2}},X_{i_{3}},X_{i_{4}}$ skipped) in $M_{d}(\mathbb{R})$ such that each individual $X_{i}$ has the same distribution as $X=\sum_{k=1}^{n}g_{k}A_{k}$. By Lemma \ref{tracecross2}, the absolute value of the expression (\ref{recursioneq3}) is at most $\left(\max_{k\in[n]}\|A_{k}\|_{F}\right)^{2}\left\|\sum_{k=1}^{n}A_{k}^{2}\right\|\max_{j}\mathbb{E}\mathrm{Tr}(X_{j}^{p-4})=\left(\max_{k\in[n]}\|A_{k}\|_{F}\right)^{2}\left\|\sum_{k=1}^{n}A_{k}^{2}\right\|\mathbb{E}\mathrm{Tr}(X^{p-4})$, since each $X_{j}$ has the same distribution as $X$. Thus, by (\ref{recursioneq2}),
\[\sum_{\nu\in\mathrm{Cr}_{2}([p])}\sum_{\substack{f:[p]\to[n]\\f\sim\nu}}\mathrm{Tr}(A_{f(1)}\ldots A_{f(p)})\leq q\left(\max_{k\in[n]}\|A_{k}\|_{F}\right)^{2}\left\|\sum_{k=1}^{n}A_{k}^{2}\right\|\mathbb{E}\mathrm{Tr}(X^{p-4}).\]
Since $q\leq 4^{p}p^{2}\leq 8^{p}$, by (\ref{recursioneq1}), the result follows.
\end{proof}
\begin{proof}[Proof of the first statement of Theorem \ref{main2}]
Without loss of generality, we may assume that $A_{1},\ldots,A_{n}$ are self-adjoint by replacing each $A_{k}$ by the self-adjoint matrix $\begin{bmatrix}0&A_{k}\\A_{k}^{*}&0\end{bmatrix}$. Fix $\epsilon>0$. By Lemma \ref{recursion}, for all even number $4\leq p\leq\epsilon\log_{8}d$,
\[\mathbb{E}\mathrm{Tr}(X^{p})\leq d\cdot 2^{p}\left\|\sum_{k=1}^{n}A_{k}^{2}\right\|^{\frac{p}{2}}+d^{\epsilon}\left(\max_{k\in[n]}\|A_{k}\|_{F}\right)^{2}\left\|\sum_{k=1}^{n}A_{k}^{2}\right\|\mathbb{E}\mathrm{Tr}(X^{p-4}).\]
Let $b_{1}=2\|\sum_{k=1}^{n}A_{k}^{2}\|^{\frac{1}{2}}$ and $b_{2}=d^{\epsilon}(\max_{i\in[n]}\|A_{i}\|_{F})^{2}\|\sum_{k=1}^{n}A_{k}^{2}\|$. For $p\leq\epsilon\log_{8}d$, let $a_{p}=\mathbb{E}\mathrm{Tr}(X^{p})$. Then $a_{p}\leq d\cdot b_{1}^{p}+b_{2}a_{p-4}$, for all even number $4\leq p\leq\epsilon\log_{8}d$, and $a_{0}=d$. Thus, for all $p\leq\epsilon\log_{8}d$ with $p$ divisible by $4$, we have
\[a_{p}\leq d(b_{1}^{p}+b_{2}b_{1}^{p-4}+b_{2}^{2}b_{1}^{p-8}+\ldots+b_{2}^{\frac{p}{4}-1}b_{1}^{4}+b_{2}^{\frac{p}{4}}),\]
so by Young's inequality, $a_{p}\leq d(\frac{p}{4}+1)(b_{1}^{p}+b_{2}^{\frac{p}{4}})$. Since $\mathbb{E}\|X\|\leq(\mathbb{E}\mathrm{Tr}(X^{p}))^{\frac{1}{p}}=a_{p}^{\frac{1}{p}}$, taking $p$ to be the largest number divisible by $4$ and such that $p\leq\epsilon\log_{8}d$, we obtain
\[\mathbb{E}\|X\|\lesssim_{\epsilon}b_{1}+b_{2}^{\frac{1}{4}}\lesssim\left\|\sum_{k=1}^{n}A_{k}^{2}\right\|^{\frac{1}{2}}+d^{\frac{\epsilon}{4}}(\max_{i\in[n]}\|A_{i}\|_{F})^{\frac{1}{2}}\left\|\sum_{k=1}^{n}A_{k}^{2}\right\|^{\frac{1}{4}}.\]
But $d^{\frac{\epsilon}{4}}(\max_{i\in[n]}\|A_{i}\|_{F})^{\frac{1}{2}}\|\sum_{k=1}^{n}A_{k}^{2}\|^{\frac{1}{4}}\leq\|\sum_{k=1}^{n}A_{k}^{2}\|^{\frac{1}{2}}+d^{\frac{\epsilon}{2}}\max_{i\in[n]}\|A_{i}\|_{F}$. Thus, the result follows.
\end{proof}
\section{Sample covariance matrix}
\begin{theorem}\label{samplecovthm}
Suppose that $X_{1},\ldots,X_{M}$ are $d\times d$ independent random matrices and for each $r\in[M]$, the entries of $X_{r}$ are jointly Gaussian entries and $\mathbb{E}X_{r}=0$. Then
\[\mathbb{E}\left\|\sum_{r=1}^{M}(X_{r}^{*}X_{r}-\mathbb{E}(X_{r}^{*}X_{r}))\right\|\lesssim_{\epsilon}\left(\sum_{r=1}^{M}\left(\|\mathbb{E}(X_{r}^{*}X_{r})\|^{2}+\|\mathbb{E}(X_{r}X_{r}^{*})\|^{2}+d^{\epsilon}\|\mathbb{E}(X_{r}\otimes X_{r})\|^{2}\right)\right)^{\frac{1}{2}},\]
for all $\epsilon>0$.
\end{theorem}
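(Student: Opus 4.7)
The plan is to reduce to Theorem \ref{main1} through a decoupling step followed by conditioning. First, introduce independent copies $X_r'$ of each $X_r$ and use the orthogonal rotation $Y_r=(X_r+X_r')/\sqrt{2}$, $Z_r=(X_r-X_r')/\sqrt{2}$; by Gaussian rotation invariance $Y_r$ and $Z_r$ are mutually independent with the same distribution as $X_r$, and a direct expansion gives
\[X_r^*X_r - X_r'^*X_r' \;=\; Y_r^*Z_r + Z_r^*Y_r.\]
Combined with the standard symmetrization $\mathbb{E}\|\sum_r(X_r^*X_r-\mathbb{E}X_r^*X_r)\|\le \mathbb{E}\|\sum_r(X_r^*X_r-X_r'^*X_r')\|$ and the triangle inequality, this yields
\[\mathbb{E}\Big\|\sum_{r=1}^M (X_r^*X_r-\mathbb{E}X_r^*X_r)\Big\| \;\lesssim\; \mathbb{E}\Big\|\sum_{r=1}^M Y_r^*Z_r\Big\|.\]

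Next, set $W=\sum_r Y_r^*Z_r$ and condition on the family $Z=(Z_r)_r$. Conditionally, $W$ is a $d\times d$ centered Gaussian matrix, so Theorem \ref{main1} applies; taking $\mathbb{E}[\cdot\,|Z]$ first and then $\mathbb{E}_Z$, and using $\mathbb{E}\sqrt{\cdot}\le\sqrt{\mathbb{E}\cdot}$, we obtain
\[\mathbb{E}\|W\|\;\lesssim_\epsilon\;\bigl(\mathbb{E}\|\mathbb{E}(W^*W|Z)\|\bigr)^{\frac12}+\bigl(\mathbb{E}\|\mathbb{E}(WW^*|Z)\|\bigr)^{\frac12}+d^{\epsilon/2}\bigl(\mathbb{E}\|\mathbb{E}(W\otimes W|Z)\|\bigr)^{\frac12}.\]
Because distinct $Y_r$ are independent and centered, only the diagonal $r=r'$ terms survive in each conditional covariance, giving
\[\mathbb{E}(W^*W|Z)=\sum_r Z_r^*P_r Z_r,\quad P_r:=\mathbb{E}(X_rX_r^*),\]
and analogous expressions for $\mathbb{E}(WW^*|Z)$ and $\mathbb{E}(W\otimes W|Z)$ in terms of $Z_rZ_r^*$ and $Z_r\otimes Z_r$ contracted against the fixed covariance tensors of $X_r$.

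The three resulting expected norms are then each estimated by splitting into a deterministic ``mean'' piece and a centered fluctuation. For the deterministic piece the bound $P_r\preceq\|P_r\|I$ and the analogous dominations yield estimates like $\|\mathbb{E}(Z_r^*P_rZ_r)\|\le\|\mathbb{E}(X_rX_r^*)\|\cdot\|\mathbb{E}(X_r^*X_r)\|$, which by AM-GM is at most $\tfrac12(\|\mathbb{E}(X_r^*X_r)\|^2+\|\mathbb{E}(X_rX_r^*)\|^2)$; summing in $r$ and taking the square root reproduces the desired $\ell^2$-type right-hand side. The $d^{\epsilon/2}(\mathbb{E}\|\mathbb{E}(W\otimes W|Z)\|)^{1/2}$ term is controlled the same way, with a factor $\mathbb{E}\|Z_r\|^2$ bounded through Theorem \ref{main1} applied to $X_r$ itself, and the cross products $d^\epsilon\|\mathbb{E}(X_r\otimes X_r)\|\cdot\|\mathbb{E}(X_r^*X_r)\|$ that appear are absorbed by AM-GM into $\|\mathbb{E}(X_r^*X_r)\|^2+d^{2\epsilon}\|\mathbb{E}(X_r\otimes X_r)\|^2$, allowing a harmless redefinition of $\epsilon$.

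The main technical obstacle is the fluctuation pieces: each expression $\mathbb{E}\|\sum_r(Z_r^*P_rZ_r-\mathbb{E}(Z_r^*P_rZ_r))\|$ is, writing $\tilde X_r=P_r^{1/2}Z_r$, precisely another instance of the theorem being proved. A recursive application produces quantities such as $\|\mathbb{E}(X_rX_r^*)\|^2\|\mathbb{E}(X_r^*X_r)\|^2$ and $\|\mathbb{E}(X_rX_r^*)\|^4$, which are not obviously dominated by the desired $\ell^2$-sum of squares. The plan is to close the recursion via a self-improvement argument: one introduces the quantity $T:=\mathbb{E}\|\sum_r(X_r^*X_r-\mathbb{E}X_r^*X_r)\|$, uses the inequality derived above together with the inductive hypothesis and Young's inequality to obtain $T\le \tfrac12 T + \text{(desired RHS)}$ after trading dimensional factors $d^{c\epsilon}\leftrightarrow d^\epsilon$, and then absorbs the $\tfrac12 T$ to finish. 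The core difficulty is carrying out this absorption uniformly in $M$ and in the covariance structure, and verifying that the recursive instance is indeed strictly controlled by the main bound with a universal constant, so that the geometric series closes.
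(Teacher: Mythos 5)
Your symmetrization, decoupling (the discrete $45^\circ$ rotation $Y_r=(X_r+X_r')/\sqrt2$, $Z_r=(X_r-X_r')/\sqrt2$ is a legitimate substitute for the paper's continuous rotation), and the conditional application of Theorem \ref{main1} to $W=\sum_r Y_r^*Z_r$ all match the paper's strategy. The genuine gap is in your treatment of the conditional covariance terms. You try to bound $\mathbb{E}\|\sum_r Z_r^*P_rZ_r\|$ sharply by splitting into a mean and a centered fluctuation, which produces another instance of the theorem for $\widetilde X_r=P_r^{1/2}Z_r$, and you propose to close this by an absorption $T\le\tfrac12T+\mathrm{RHS}$. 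This does not work as stated: the recursive quantity is not the original $T$ but a different one (built from the matrices $P_r^{1/2}Z_r$), there is no decreasing induction parameter, and the constant produced by each application of Theorem \ref{main1} is some $C_\epsilon\ge 1$ rather than $\tfrac12$, so there is no mechanism forcing the "geometric series" to converge. As written, the argument is circular.

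The fix — and what the paper actually does — is to not split into mean plus fluctuation at all. Use the crude termwise bound
\[\left\|\sum_{r=1}^{M}Z_{r}^{*}P_{r}Z_{r}\right\|\leq\sum_{r=1}^{M}\|Z_{r}\|^{2}\,\|\mathbb{E}(X_{r}X_{r}^{*})\|,\]
(and its analogues for the other two conditional covariances, with $\|\mathbb{E}[(DX)\otimes(DX)]\|\le\|D\|^{2}\|\mathbb{E}(X\otimes X)\|$ for the tensor term), take the expectation over $Z$ termwise to get $\sum_r\mathbb{E}\|X_r\|^2\cdot\|\mathbb{E}(X_rX_r^*)\|\le\sum_r(\mathbb{E}\|X_r\|^2)^2$ after absorbing $\|\mathbb{E}(X_rX_r^*)\|\le\mathbb{E}\|X_r\|^2$, then use the Gaussian Kahane inequality $\mathbb{E}\|X_r\|^2\sim(\mathbb{E}\|X_r\|)^2$ and apply Theorem \ref{main1} to each single matrix $X_r$ to get $(\mathbb{E}\|X_r\|)^4\lesssim_\epsilon\|\mathbb{E}(X_r^*X_r)\|^2+\|\mathbb{E}(X_rX_r^*)\|^2+d^{4\epsilon}\|\mathbb{E}(X_r\otimes X_r)\|^2$, and finally rescale $\epsilon$. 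The $\ell^2$-in-$r$ form of the right-hand side of the theorem is exactly what makes this crude bound sufficient; no recursion is needed.
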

\begin{proof}
For each $r\in[M]$, let $\widetilde{X}_{r}$ be an independent copy of $X_{r}$ so that $X_{1},\ldots,X_{M},\widetilde{X}_{1},\ldots,\widetilde{X}_{M}$ are independent. Then
\begin{equation}\label{samplecovthmeq1}
\mathbb{E}\left\|\sum_{r=1}^{M}(X_{r}^{*}X_{r}-\mathbb{E}(X_{r}^{*}X_{r}))\right\|\leq\mathbb{E}\left\|\sum_{r=1}^{M}(X_{r}^{*}X_{r}-\widetilde{X}_{r}^{*}\widetilde{X}_{r})\right\|.
\end{equation}
For every $r\in[M]$,
\begin{align*}
&X_{r}X_{r}^{*}-\widetilde{X}_{r}^{*}\widetilde{X}_{r}\\=&
\int_{0}^{\frac{\pi}{2}}\frac{d}{d\theta}[(\widetilde{X}_{r}\cos\theta+X_{r}\sin\theta)^{*}(\widetilde{X}_{r}\cos\theta+X_{r}\sin\theta))]\,d\theta\\=&
\int_{0}^{\frac{\pi}{2}}(-\widetilde{X}_{r}\sin\theta+X_{r}\cos\theta)^{*}(\widetilde{X}_{r}\cos\theta+X_{r}\sin\theta)\,d\theta\\&
+\int_{0}^{\frac{\pi}{2}}(\widetilde{X}_{r}\cos\theta+X_{r}\sin\theta)^{*}(-\widetilde{X}_{r}\sin\theta+X_{r}\cos\theta)\,d\theta,
\end{align*}
and since $X_{r},\widetilde{X}_{r}$ are independent and have the same centered Gaussian distribution on $M_{d}(\mathbb{R})$, the pair of random matrices $(-\widetilde{X}_{r}\sin t+X_{r}\cos t,\,\widetilde{X}_{r}\cos t+X_{r}\sin t)$ has the same distribution as $(X_{r},\widetilde{X}_{r})$ for every $t\in[0,\frac{\pi}{2}]$. Thus,
\begin{equation}\label{samplecovthmeq2}
\mathbb{E}\left\|\sum_{r=1}^{M}(X_{r}X_{r}^{*}-\widetilde{X}_{r}^{*}\widetilde{X}_{r})\right\|\leq\frac{\pi}{2}\mathbb{E}\left\|\sum_{r=1}^{M}X_{r}^{*}\widetilde{X}_{r}\right\|+\frac{\pi}{2}\mathbb{E}\left\|\sum_{r=1}^{M}\widetilde{X}_{r}^{*}X_{r}\right\|=\pi\mathbb{E}\left\|\sum_{r=1}^{M}\widetilde{X}_{r}^{*}X_{r}\right\|.
\end{equation}
Fix deterministic $D_{1},\ldots,D_{M}\in M_{d}(\mathbb{R})$. Then $\sum_{r=1}^{M}D_{r}X_{r}$ has jointly Gaussian entries and mean $0$. So by Theorem \ref{main1},
\begin{align*}
&\mathbb{E}\left\|\sum_{r=1}^{M}D_{r}X_{r}\right\|\\\lesssim_{\epsilon}&
\left\|\mathbb{E}\left(\sum_{r=1}^{M}D_{r}X_{r}\right)^{*}\left(\sum_{r=1}^{M}D_{r}X_{r}\right)\right\|^{\frac{1}{2}}+\left\|\mathbb{E}\left(\sum_{r=1}^{M}D_{r}X_{r}\right)\left(\sum_{r=1}^{M}D_{r}X_{r}\right)^{*}\right\|^{\frac{1}{2}}\\&+
d^{\epsilon}\left\|\mathbb{E}\left(\sum_{r=1}^{M}D_{r}X_{r}\right)\otimes\left(\sum_{r=1}^{M}D_{r}X_{r}\right)\right\|^{\frac{1}{2}}\\=&
\left\|\sum_{r=1}^{M}\mathbb{E}(X_{r}^{*}D_{r}^{*}D_{r}X_{r})\right\|^{\frac{1}{2}}+\left\|\sum_{r=1}^{M}\mathbb{E}(D_{r}^{*}X_{r}^{*}X_{r}D_{r})\right\|^{\frac{1}{2}}+d^{\epsilon}\left\|\sum_{r=1}^{M}\mathbb{E}[(D_{r}X_{r})\otimes(D_{r}X_{r})]\right\|^{\frac{1}{2}}\\\leq&
2\left(\sum_{r=1}^{M}\|D_{r}\|^{2}\mathbb{E}\|X_{r}\|^{2}\right)^{\frac{1}{2}}+d^{\epsilon}\left(\sum_{r=1}^{M}\|\mathbb{E}[(D_{r}X_{r})\otimes(D_{r}X_{r})]\|\right)^{\frac{1}{2}}
\end{align*}
But for every deterministic $D\in M_{d}(\mathbb{R})$ and every random matrix $X\in M_{d}(\mathbb{R})$ with $\mathbb{E}\|X\|^{2}<\infty$,
\begin{eqnarray*}
\|\mathbb{E}[(DX)\otimes(DX)]\|&=&\sup_{\substack{B\in M_{d}(\mathbb{R})\\\|B\|_{F}\leq 1}}\mathbb{E}[\mathrm{Tr}(DXB^{*})]^{2}\\&=&
\sup_{\substack{B\in M_{d}(\mathbb{R})\\\|B\|_{F}\leq 1}}\mathbb{E}[\mathrm{Tr}(XB^{*}D)]^{2}\\&\leq&
\|D\|^{2}\sup_{\substack{B\in M_{d}(\mathbb{R})\\\|B\|_{F}\leq 1}}\mathbb{E}[\mathrm{Tr}(XB^{*})]^{2}=\|D\|^{2}\|\mathbb{E}(X\otimes X)\|.
\end{eqnarray*}
Therefore,
\[\mathbb{E}\left\|\sum_{r=1}^{M}X_{r}D_{r}\right\|\lesssim_{\epsilon}2\left(\sum_{r=1}^{M}\|D_{r}\|^{2}\mathbb{E}\|X_{r}\|^{2}\right)^{\frac{1}{2}}+d^{\epsilon}\left(\sum_{r=1}^{M}\|D_{r}\|^{2}\|\mathbb{E}(X_{r}\otimes X_{r})\|\right)^{\frac{1}{2}}.\]
for all deterministic $D_{1},\ldots,D_{M}\in M_{d}(\mathbb{R})$. So
\begin{eqnarray*}
\mathbb{E}\left\|\sum_{r=1}^{M}X_{r}\widetilde{X}_{r}\right\|&\lesssim_{\epsilon}&
\left(\sum_{r=1}^{M}\mathbb{E}\|\widetilde{X}_{r}\|^{2}\mathbb{E}\|X_{r}\|^{2}\right)^{\frac{1}{2}}+d^{\epsilon}\left(\sum_{r=1}^{M}\mathbb{E}\|\widetilde{X}_{r}\|^{2}\|\mathbb{E}(X_{r}\otimes X_{r})\|\right)^{\frac{1}{2}}\\&=&
\left(\sum_{r=1}^{M}(\mathbb{E}\|X_{r}\|^{2})^{2}\right)^{\frac{1}{2}}+\left(\sum_{r=1}^{M}\mathbb{E}\|X_{r}\|^{2}d^{2\epsilon}\|\mathbb{E}(X_{r}\otimes X_{r})\|\right)^{\frac{1}{2}}\\&\leq&
\left(\sum_{r=1}^{M}(\mathbb{E}\|X_{r}\|^{2})^{2}\right)^{\frac{1}{2}}+\left(\sum_{r=1}^{M}[(\mathbb{E}\|X_{r}\|^{2})^{2}+d^{4\epsilon}\|\mathbb{E}(X_{r}\otimes X_{r})\|^{2}]\right)^{\frac{1}{2}}.
\end{eqnarray*}
By a Gaussian version of Kahane's inequality \cite{Kahane} or by concentration of $\|X\|$, we have $\mathbb{E}\|X\|^{2}\sim(\mathbb{E}\|X\|)^{2}$. Therefore,
\[\mathbb{E}\left\|\sum_{r=1}^{M}X_{r}\widetilde{X}_{r}\right\|\lesssim_{\epsilon}\left(\sum_{r=1}^{M}[(\mathbb{E}\|X_{r}\|)^{4}+d^{4\epsilon}\|\mathbb{E}(X_{r}\otimes X_{r})\|^{2}]\right)^{\frac{1}{2}}.\]
So by (\ref{samplecovthmeq1}) and (\ref{samplecovthmeq2}),
\[\mathbb{E}\left\|\sum_{r=1}^{M}(X_{r}^{*}X_{r}-\mathbb{E}(X_{r}^{*}X_{r}))\right\|\lesssim_{\epsilon}\left(\sum_{r=1}^{M}[(\mathbb{E}\|X_{r}\|)^{4}+d^{4\epsilon}\|\mathbb{E}(X_{r}\otimes X_{r})\|^{2}]\right)^{\frac{1}{2}}.\]
Thus, by Theorem \ref{main1}, the result follows.
\end{proof}
\begin{corollary}\label{samplecov2}
Suppose that $\mu$ is a probability measure on $\{B\in M_{d_{2}}(\mathbb{R})|\,B\text{ is positive semidefinite}\}$ and $\mathrm{Tr}(B)\geq\max(d_{1}^{\epsilon},d_{2}^{\epsilon})\|B\|$ $\mu$-a.s. Let $M\in\mathbb{N}$. Let $z_{1},\ldots,z_{Md_{1}}$ be i.i.d.~random vectors in $\mathbb{R}^{d_{2}}$ chosen according to $\int\mathcal{N}(0,B)\,d\mu(B)$, i.e., $\mathbb{P}(z_{1}\in\mathcal{S})=\int\mathbb{P}(B^{\frac{1}{2}}g\in\mathcal{S})\,d\mu(B)$ for all measurable $\mathcal{S}\subset\mathbb{R}^{d_{2}}$, where $g$ is a standard Gaussian on $\mathbb{R}^{d_{2}}$. Then
\begin{align*}
&\mathbb{E}\left\|\frac{1}{Md_{1}}\sum_{i=1}^{Md_{1}}z_{i}z_{i}^{T}-\int B\,d\mu(B)\right\|\\\lesssim_{\epsilon}&
\frac{1}{d_{1}\sqrt{M}}\left(d_{1}\left\|\int B\,d\mu(B)\right\|+\left(\mathbb{E}\max_{i\in[d_{1}]}[\mathrm{Tr}(B_{i})]^{2}\right)^{\frac{1}{2}}+\sqrt{d_{1}\log d_{2}}\left\|\int B^{2}\,d\mu(B)\right\|^{\frac{1}{2}}\right),
\end{align*}
where $B_{1},\ldots,B_{d_{1}}$ in $M_{d_{2}}(\mathbb{R})$ are i.i.d.~chosen according to $\mu$.
\end{corollary}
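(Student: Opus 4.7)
The plan is to partition the $Md_1$ samples into $M$ groups of size $d_1$, apply Theorem~\ref{samplecovthm} conditionally on the latent covariance matrices (so that each group is a genuine centered Gaussian matrix), and then handle the remaining bias via matrix Bernstein. Concretely, split $[Md_1]=\bigsqcup_{r=1}^M I_r$ with $|I_r|=d_1$ and let $X_r$ be the $d_1\times d_2$ matrix whose rows are the $z_i$, $i\in I_r$, so that $\sum_{i=1}^{Md_1}z_iz_i^T=\sum_{r=1}^M X_r^*X_r$. Writing $B_{r,j}$ for the covariance drawn from $\mu$ corresponding to the $j$-th row of $X_r$, I decompose
\[
\sum_{i=1}^{Md_1}z_iz_i^T - Md_1\!\int\! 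B\,d\mu(B) \;=\;\underbrace{\Bigl(\sum_{r=1}^M X_r^*X_r - \sum_{r,j} B_{r,j}\Bigr)}_{(\mathrm{I})} \;+\; \underbrace{\sum_{r,j}(B_{r,j}-\mathbb{E}B)}_{(\mathrm{II})},
\]
so that $(\mathrm{I})$ is a sum of conditionally independent centered Gaussian quadratic forms while $(\mathrm{II})$ is a pure sample bias among i.i.d.\ positive-semidefinite matrices.

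Conditionally on the family $(B_{r,j})$, the matrices $X_r$ are independent centered Gaussians, each fitting the setting of Corollary~\ref{indeprow}; rereading that proof gives $\mathbb{E}(X_r^*X_r\mid B_\cdot)=\sum_j B_{r,j}$ and $\|\mathbb{E}(X_rX_r^*\mid B_\cdot)\|=\max_j\mathrm{Tr}(B_{r,j})$, while a short direct calculation (using $\mathrm{Tr}(XB^T)=\sum_j\langle x_j,B^T e_j\rangle$) yields $\|\mathbb{E}(X_r\otimes X_r\mid B_\cdot)\|=\max_j\|B_{r,j}\|$. Applying Theorem~\ref{samplecovthm} conditionally, then Jensen's inequality together with the i.i.d.\ property of the groups, and finally using the hypothesis $\mathrm{Tr}(B)\ge\max(d_1^\epsilon,d_2^\epsilon)\|B\|$ to absorb the $d^\epsilon\max_j\|B_{r,j}\|^2$ term into $\max_j[\mathrm{Tr}(B_{r,j})]^2$, one obtains
\[
\mathbb{E}\|(\mathrm{I})\|\;\lesssim_{\epsilon}\;\sqrt{M}\,\Bigl(\mathbb{E}\bigl\|\textstyle\sum_{j=1}^{d_1}B_{1,j}\bigr\|^2 + \mathbb{E}\max_{j\in[d_1]}[\mathrm{Tr}(B_{1,j})]^2\Bigr)^{1/2}.
\]

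Two further applications of Tropp's matrix Bernstein inequality \cite[Theorem~5.1]{Troppelemappr} close the estimate. Writing $\sum_j B_{1,j}=d_1\mathbb{E}B+\sum_j(B_{1,j}-\mathbb{E}B)$ and applying Bernstein to the centered sum gives $\mathbb{E}\|\sum_j(B_{1,j}-\mathbb{E}B)\|^2\lesssim d_1(\log d_2)\|\mathbb{E}B^2\|+(\log d_2)^2\mathbb{E}\max_j\|B_{1,j}\|^2$, with the last summand absorbed into $\mathbb{E}\max_j[\mathrm{Tr}(B_{1,j})]^2$ by the trace hypothesis. This produces the three target terms $d_1\|\mathbb{E}B\|+\sqrt{d_1\log d_2}\,\|\mathbb{E}B^2\|^{1/2}+(\mathbb{E}\max_j[\mathrm{Tr}(B_{1,j})]^2)^{1/2}$ inside the prefactor $\sqrt{M}$. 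For $(\mathrm{II})$, matrix Bernstein on the $Md_1$ i.i.d.\ centered matrices yields $\mathbb{E}\|(\mathrm{II})\|\lesssim\sqrt{Md_1\log d_2}\,\|\mathbb{E}B^2\|^{1/2}+(\log d_2)\mathbb{E}\max_{r,j}\|B_{r,j}\|$, and the max term is controlled by $\mathbb{E}\max_{r,j}\|B_{r,j}\|\le d^{-\epsilon}\mathbb{E}\max_{r,j}\mathrm{Tr}(B_{r,j})\le d^{-\epsilon}\sqrt{M}\,(\mathbb{E}\max_{j\in[d_1]}[\mathrm{Tr}(B_{1,j})]^2)^{1/2}$ via the trace hypothesis together with $\mathbb{E}\max_{r,j}[\mathrm{Tr}(B_{r,j})]^2\le M\mathbb{E}\max_j[\mathrm{Tr}(B_{1,j})]^2$. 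Adding the two estimates for $(\mathrm{I})$ and $(\mathrm{II})$ and dividing by $Md_1$ yields exactly the claimed bound.

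The main obstacle is the careful bookkeeping required to absorb each $d^\epsilon$ and each $\log d_2$ factor into precisely the right target term, and in particular to ensure that the ``maximum over $Md_1$ samples'' produced by $(\mathrm{II})$ collapses to the ``maximum over $d_1$'' appearing in the statement, with an extra $\sqrt{M}$ that matches the prefactor exactly. A secondary technical point is that the sharpest form of matrix Bernstein requires an almost-sure bound on the summand norms, whereas $\mu$ need not be compactly supported; this can be handled either by a truncation argument or by a subexponential version of Bernstein, at the cost only of absolute constants.
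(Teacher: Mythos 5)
Your proposal is correct and follows essentially the same route as the paper: condition on the latent covariances, apply Theorem \ref{samplecovthm} to the $M$ groups of $d_1$ rows with the parameters computed as in the proof of Corollary \ref{indeprow}, absorb the $d^{\epsilon}$ and $\log d_2$ factors via the trace hypothesis, and control the sample bias $\sum_i(B_i-\mathbb{E}B_i)$ with Tropp's Theorem 5.1(2). The only cosmetic differences are that for $\mathbb{E}\|\sum_{j}B_{1,j}\|^2$ the paper uses the positive-semidefinite bound 5.1(1) (giving $d_1^2\|\int B\,d\mu\|^2$ plus a max term) rather than centering and applying Bernstein, and that your worry about needing an almost-sure bound is moot since Tropp's Theorem 5.1 is stated with $\mathbb{E}\max_i\|Y_i\|^2$, so no truncation is required.
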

\begin{proof}
Fix $B_{1},\ldots,B_{Md_{1}}\in M_{d_{2}}(\mathbb{R})$. For each $r\in[M]$, let $X_{r}$ be a $d_{1}\times d_{2}$ random matrix with independent rows such that for $i\in[d_{1}]$, the $i$th row of $X_{r}$ is a centered Gaussian random vector with covariance matrix $B_{i+(r-1)d_{1}}\in M_{d_{2}}(\mathbb{R})$. Then using computations from the proof of Corollary \ref{indeprow}, we have, by Theorem \ref{samplecovthm},
\begin{align*}
&\mathbb{E}\left\|\sum_{r=1}^{M}(X_{r}^{*}X_{r}-\mathbb{E}(X_{r}^{*}X_{r}))\right\|\\\lesssim_{\epsilon}&
\left[\sum_{r=1}^{M}\left(\left\|\sum_{i=1}^{d_{1}}B_{i+(r-1)d_{1}}\right\|^{2}+\max_{i\in[d_{1}]}[\mathrm{Tr}(B_{i+(r-1)d_{1}})]^{2}+\max(d_{1}^{\epsilon},d_{2}^{\epsilon})\max_{i\in[d_{1}]}\|B_{i+(r-1)d_{1}}\|^{2}\right)\right]^{\frac{1}{2}}.
\end{align*}
By assumption, $z_{1},\ldots,z_{Md_{1}}$ are chosen as follows: first, choose i.i.d.~$B_{1},\ldots,B_{Md_{1}}$ in $M_{d_{2}}(\mathbb{R})$ according to $\mu$ and then for each $i\in[Md_{1}]$, take $z_{i}\sim\mathcal{N}(0,B_{i})$ Thus, conditioning on $B_{1},\ldots,B_{Md_{1}}$, we have
\begin{align*}
&\mathbb{E}\left(\left.\left\|\sum_{i=1}^{Md_{1}}(z_{i}z_{i}^{T}-B_{i})\right\|\;\right|\,B_{1},\ldots,B_{Md_{1}}\right)\\\lesssim_{\epsilon}&
\left[\sum_{r=1}^{M}\left(\left\|\sum_{i=1}^{d_{1}}B_{i+(r-1)d_{1}}\right\|^{2}+\max_{i\in[d_{1}]}[\mathrm{Tr}(B_{i+(r-1)d_{1}})]^{2}+\max(d_{1}^{\epsilon},d_{2}^{\epsilon})\max_{i\in[d_{1}]}\|B_{i+(r-1)d_{1}}\|^{2}\right)\right]^{\frac{1}{2}}\\\lesssim&
\left[\sum_{r=1}^{M}\left(\left\|\sum_{i=1}^{d_{1}}B_{i+(r-1)d_{1}}\right\|^{2}+\max_{i\in[d_{1}]}[\mathrm{Tr}(B_{i+(r-1)d_{1}})]^{2}\right)\right]^{\frac{1}{2}}\quad\mu\hyphen a.s.,
\end{align*}
since by assumption, $\mathrm{Tr}(B)\geq\max(d_{1}^{\epsilon},d_{2}^{\epsilon})\|B\|$ $\mu$-a.s.. So
\[\mathbb{E}\left\|\sum_{i=1}^{Md_{1}}(z_{i}z_{i}^{T}-B_{i})\right\|\lesssim_{\epsilon}\sqrt{M}\left(\mathbb{E}\left\|\sum_{i=1}^{d_{1}}B_{i}\right\|^{2}+\mathbb{E}\max_{i\in[d_{1}]}[\mathrm{Tr}(B_{i})]^{2}\right)^{\frac{1}{2}}.\]
By a modified version of \cite[Theorem 5.1(1)]{Troppelemappr},
\[\mathbb{E}\left\|\sum_{i=1}^{d_{1}}B_{i}\right\|^{2}\lesssim\left\|\sum_{i=1}^{d_{1}}\mathbb{E}B_{i}\right\|^{2}+(\log d_{2})^{2}\mathbb{E}\max_{i\in[d_{1}]}\|B_{i}\|^{2}=d_{1}^{2}\left\|\int B\,d\mu(B)\right\|^{2}+(\log d_{2})^{2}\mathbb{E}\max_{i\in[d_{1}]}\|B_{i}\|^{2}.\]
But $\mathrm{Tr}(B)\geq d_{2}^{\epsilon}\|B\|$ $\mu$-a.s.. Therefore,
\[\mathbb{E}\left\|\sum_{i=1}^{Md_{1}}(z_{i}z_{i}^{T}-B_{i})\right\|\lesssim_{\epsilon}\sqrt{M}\left(d_{1}\left\|\int B\,d\mu(B)\right\|+\left(\mathbb{E}\max_{i\in[d_{1}]}[\mathrm{Tr}(B_{i})]^{2}\right)^{\frac{1}{2}}\right).\]
But by \cite[Theorem 5.1(2)]{Troppelemappr},
\begin{eqnarray*}
\mathbb{E}\left\|\sum_{i=1}^{Md_{1}}(B_{i}-\mathbb{E}B_{i})\right\|&\lesssim&\sqrt{Md_{1}\log d_{2}}\left\|\int B^{2}\,d\mu(B)\right\|^{\frac{1}{2}}+(\log d_{2})\left(\mathbb{E}\max_{i\in[Md_{1}]}\|B_{i}-\mathbb{E}B_{i}\|^{2}\right)^{\frac{1}{2}}\\&\lesssim&
\sqrt{Md_{1}\log d_{2}}\left\|\int B^{2}\,d\mu(B)\right\|^{\frac{1}{2}}+(\log d_{2})\left(M\mathbb{E}\max_{i\in[d_{1}]}\|B_{i}\|^{2}\right)^{\frac{1}{2}}\\&\lesssim_{\epsilon}&
\sqrt{Md_{1}\log d_{2}}\left\|\int B^{2}\,d\mu(B)\right\|^{\frac{1}{2}}+\left(M\mathbb{E}\max_{i\in[d_{1}]}[\mathrm{Tr}(B_{i})]^{2}\right)^{\frac{1}{2}},
\end{eqnarray*}
since $\mathrm{Tr}(B)\geq d_{2}^{\epsilon}\|B\|$ $\mu$-a.s.. Therefore,
\begin{align*}
&\mathbb{E}\left\|\sum_{i=1}^{Md_{1}}(z_{i}z_{i}^{T}-\mathbb{E}B_{i})\right\|\\\lesssim_{\epsilon}&
\sqrt{M}\left(d_{1}\left\|\int B\,d\mu(B)\right\|+\left(\mathbb{E}\max_{i\in[d_{1}]}[\mathrm{Tr}(B_{i})]^{2}\right)^{\frac{1}{2}}+\sqrt{d_{1}\log d_{2}}\left\|\int B^{2}\,d\mu(B)\right\|^{\frac{1}{2}}\right).
\end{align*}
Thus, the result follows.
\end{proof}
\begin{corollary}\label{samplecov3}
Suppose that $\mu$ is a probability measure on $\{B\in M_{d_{2}}(\mathbb{R})|\,B\text{ is positive semidefinite}\}$ and $\mathrm{Tr}(B)\leq t$ and $\displaystyle\|B\|\leq\frac{t}{\max(d_{1}^{\epsilon},d_{2}^{\epsilon})}$ $\mu$-a.s., where $t\in[d_{2}]$ is fixed. Let $M\in\mathbb{N}$. Let $z_{1},\ldots,z_{Md_{1}}$ be i.i.d.~random vectors in $\mathbb{R}^{d_{2}}$ chosen according to $\int\mathcal{N}(0,B)\,d\mu(B)$, i.e., $\mathbb{P}(z_{1}\in\mathcal{S})=\int\mathbb{P}(B^{\frac{1}{2}}g\in\mathcal{S})\,d\mu(B)$ for all measurable $\mathcal{S}\subset\mathbb{R}^{d_{2}}$, where $g$ is a standard Gaussian on $\mathbb{R}^{d_{2}}$. Then
\[\mathbb{E}\left\|\frac{1}{Md_{1}}\sum_{i=1}^{Md_{1}}z_{i}z_{i}^{T}-\int B\,d\mu(B)\right\|\\\lesssim_{\epsilon}\frac{1}{\sqrt{M}}\left(\left\|\int B\,d\mu(B)\right\|+\frac{t}{d_{1}}\right).\]
\end{corollary}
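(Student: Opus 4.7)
The plan is to follow the template of Corollary~\ref{samplecov2}, but taking advantage of the uniform bounds $\mathrm{Tr}(B)\le t$ and $\|B\|\le t/\max(d_{1}^{\epsilon},d_{2}^{\epsilon})$ at every step where the previous proof relied on the trace-versus-norm hypothesis. Write $S=\sum_{i=1}^{Md_{1}}z_{i}z_{i}^{T}-Md_{1}\int B\,d\mu(B)$ and, as in Corollary~\ref{samplecov2}, sample $B_{1},\ldots,B_{Md_{1}}$ i.i.d.\ from $\mu$ and then $z_{i}\sim\mathcal{N}(0,B_{i})$. Split
\[
S=\sum_{i=1}^{Md_{1}}\bigl(z_{i}z_{i}^{T}-B_{i}\bigr)+\sum_{i=1}^{Md_{1}}\bigl(B_{i}-\mathbb{E}B_{i}\bigr),
\]
group the $Md_{1}$ indices into $M$ consecutive blocks of size $d_{1}$ and let $X_{r}$ be the $d_{1}\times d_{2}$ Gaussian matrix whose $i$th row is $z_{i+(r-1)d_{1}}$. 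Then the first sum equals $\sum_{r=1}^{M}\bigl(X_{r}^{*}X_{r}-\mathbb{E}(X_{r}^{*}X_{r}\mid B_{1},\ldots,B_{Md_{1}})\bigr)$ and we estimate each summand conditionally on the $B_{i}$'s.

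Conditionally, Theorem~\ref{samplecovthm} (combined with the identities from the proof of Corollary~\ref{indeprow}) gives
\[
\mathbb{E}\Bigl(\bigl\|\textstyle\sum_{r}(X_{r}^{*}X_{r}-\mathbb{E}X_{r}^{*}X_{r})\bigr\|\,\bigm|\,B_{1},\ldots,B_{Md_{1}}\Bigr)\lesssim_{\epsilon}\Bigl(\sum_{r=1}^{M}\bigl[\|{\textstyle\sum_{i}B_{i+(r-1)d_{1}}}\|^{2}+\max_{i}\mathrm{Tr}(B_{i+(r-1)d_{1}})^{2}+\max(d_{1}^{\epsilon},d_{2}^{\epsilon})\max_{i}\|B_{i+(r-1)d_{1}}\|^{2}\bigr]\Bigr)^{\!\tfrac{1}{2}}.
\]
The uniform hypotheses give $\max_{i}\mathrm{Tr}(B_{i})^{2}\le t^{2}$ and $\max(d_{1}^{\epsilon},d_{2}^{\epsilon})\max_{i}\|B_{i}\|^{2}\le t^{2}/\max(d_{1}^{\epsilon},d_{2}^{\epsilon})\le t^{2}$, so after taking expectation in the $B_{i}$'s and using Jensen,
\[
\mathbb{E}\bigl\|\textstyle\sum_{i}(z_{i}z_{i}^{T}-B_{i})\bigr\|\lesssim_{\epsilon}\sqrt{M}\bigl(\mathbb{E}\|\textstyle\sum_{i=1}^{d_{1}}B_{i}\|^{2}\bigr)^{\tfrac{1}{2}}+\sqrt{M}\,t.
\]
A Tropp-type $L^{2}$ bound (the modified \cite[Theorem 5.1(1)]{Troppelemappr} quoted in Corollary~\ref{samplecov2}) yields $\mathbb{E}\|\sum_{i=1}^{d_{1}}B_{i}\|^{2}\lesssim d_{1}^{2}\|\int B\,d\mu\|^{2}+(\log d_{2})^{2}\mathbb{E}\max_{i}\|B_{i}\|^{2}\lesssim_{\epsilon}d_{1}^{2}\|\int B\,d\mu\|^{2}+t^{2}$, using $(\log d_{2})^{2}/d_{2}^{2\epsilon}\lesssim_{\epsilon}1$; hence the first piece of $S$ is $\lesssim_{\epsilon}\sqrt{M}\bigl(d_{1}\|\int B\,d\mu\|+t\bigr)$.

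For the second piece, apply the matrix-Bernstein form \cite[Theorem 5.1(2)]{Troppelemappr}:
\[
\mathbb{E}\bigl\|\textstyle\sum_{i=1}^{Md_{1}}(B_{i}-\mathbb{E}B_{i})\bigr\|\lesssim\sqrt{Md_{1}\log d_{2}}\,\|{\textstyle\int} B^{2}\,d\mu\|^{\tfrac{1}{2}}+(\log d_{2})\bigl(\mathbb{E}\max_{i}\|B_{i}\|^{2}\bigr)^{\tfrac{1}{2}}.
\]
The tail term is $\lesssim_{\epsilon}t$ as above. For the main Bernstein term, the key observation is $B^{2}\preceq\|B\|B\preceq(t/\max(d_{1}^{\epsilon},d_{2}^{\epsilon}))B$ for PSD $B$ $\mu$-a.s., so $\|{\textstyle\int}B^{2}\,d\mu\|\le(t/\max(d_{1}^{\epsilon},d_{2}^{\epsilon}))\|{\textstyle\int}B\,d\mu\|$; combined with $(\log d_{2})/\max(d_{1}^{\epsilon},d_{2}^{\epsilon})\lesssim_{\epsilon}1$ and AM-GM $d_{1}\,t\,\|\int B\,d\mu\|\le\tfrac{1}{2}(d_{1}\|\int B\,d\mu\|)^{2}+\tfrac{1}{2}t^{2}$, this gives $\sqrt{Md_{1}\log d_{2}}\|\int B^{2}\,d\mu\|^{\tfrac{1}{2}}\lesssim_{\epsilon}\sqrt{M}(d_{1}\|\int B\,d\mu\|+t)$.

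Adding the two pieces with the triangle inequality bounds $\mathbb{E}\|S\|\lesssim_{\epsilon}\sqrt{M}(d_{1}\|\int B\,d\mu\|+t)$; dividing by $Md_{1}$ yields the claim. The one step requiring genuine care is the handling of $\|\int B^{2}\,d\mu\|^{1/2}$: the naive bound $\|B\|_{\infty}$ is too crude to survive the $\sqrt{d_{1}\log d_{2}}$ factor, and the sharper inequality $B^{2}\preceq\|B\|B$ followed by AM-GM is what allows absorbing the Bernstein term into $d_{1}\|\int B\,d\mu\|+t$.
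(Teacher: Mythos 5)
Your proof is correct and follows essentially the same route as the paper: the decisive step --- bounding $\left\|\int B^{2}\,d\mu(B)\right\|\leq\frac{t}{\max(d_{1}^{\epsilon},d_{2}^{\epsilon})}\left\|\int B\,d\mu(B)\right\|$ via $B^{2}\preceq\|B\|B$ and then absorbing the Bernstein term with AM--GM --- is exactly the paper's. The only difference is that the paper invokes Corollary \ref{samplecov2} as a black box whereas you re-run its proof carrying the uniform bounds $\mathrm{Tr}(B)\leq t$ and $\|B\|\leq t/\max(d_{1}^{\epsilon},d_{2}^{\epsilon})$ through each step; this is harmless and in fact slightly more careful, since the hypothesis of Corollary \ref{samplecov2} ($\mathrm{Tr}(B)\geq\max(d_{1}^{\epsilon},d_{2}^{\epsilon})\|B\|$) is not literally implied by the assumptions of Corollary \ref{samplecov3}.
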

\begin{proof}
By Corollary \ref{samplecov2},
\[\mathbb{E}\left\|\frac{1}{Md_{1}}\sum_{i=1}^{Md_{1}}z_{i}z_{i}^{T}-\int B\,d\mu(B)\right\|\\\lesssim_{\epsilon}\frac{1}{\sqrt{M}}\left(\left\|\int B\,d\mu(B)\right\|+\frac{t}{d_{1}}+\sqrt{\frac{\log d_{2}}{d_{1}}}\left\|\int B^{2}\,d\mu(B)\right\|^{\frac{1}{2}}\right).\]
Since
\[\left\|\int B^{2}\,d\mu(B)\right\|\leq\left\|\int\|B\|B\,d\mu(B)\right\|\leq\frac{t}{\max(d_{1}^{\epsilon},d_{2}^{\epsilon})}\left\|\int B\,d\mu(B)\right\|,\]
we have
\[\sqrt{\frac{\log d_{2}}{d_{1}}}\left\|\int B^{2}\,d\mu(B)\right\|^{\frac{1}{2}}\lesssim_{\epsilon}\sqrt{\frac{t}{d_{1}}}\left\|\int B\,d\mu(B)\right\|^{\frac{1}{2}}\leq\frac{t}{d_{1}}+\left\|\int B\,d\mu(B)\right\|.\]
Thus, the result follows.
\end{proof}
\begin{remark}
Suppose that $\mu$ is a probability measure on $\{B\in M_{d_{2}}(\mathbb{R})|\,B\text{ is positive semidefinite}\}$ and $\mathrm{Tr}(B)=t$ and $\displaystyle\|B\|\leq\frac{t}{d_{2}^{\epsilon}}$ $\mu$-a.s., where $t\in[d_{2}]$ is fixed. Let $0<\gamma<1$.\\
How many samples $z_{1},\ldots,z_{m}$ do we need so that
\[\mathbb{E}\left\|\frac{1}{m}\sum_{i=1}^{m}z_{i}z_{i}^{T}-\int B\,d\mu(B)\right\|\leq\gamma\left\|\int B\,d\mu(B)\right\|\,?\]
Since $\displaystyle\mathbb{E}\left\|\frac{1}{m}\sum_{i=1}^{m}z_{i}z_{i}^{T}\right\|\geq\frac{1}{m}\mathbb{E}\|z_{1}\|_{2}^{2}=\frac{1}{m}\int\mathrm{Tr}(B)\,d\mu(B)=\frac{t}{m}$, we need at least $\displaystyle\frac{t}{2\|\int B\,d\mu(B)\|}$ samples. Let $d_{1}=\left\lceil\frac{t}{2\|\int B\,d\mu(B)\|}\right\rceil$. Note that $d_{1}\in[d_{2}]$. By Corollary \ref{samplecov3}, we have
\[\mathbb{E}\left\|\frac{1}{Md_{1}}\sum_{i=1}^{Md_{1}}z_{i}z_{i}^{T}-\int B\,d\mu(B)\right\|\\\lesssim_{\epsilon}\frac{1}{\sqrt{M}}\left(\left\|\int B\,d\mu(B)\right\|+\frac{t}{d_{1}}\right)\leq\frac{3}{\sqrt{M}}\left\|\int B\,d\mu(B)\right\|.\]
Thus, the answer to the above question is between $d_{1}$ and $\displaystyle\frac{C_{\epsilon}}{\gamma^{2}}d_{1}$ samples, where $C_{\epsilon}>0$ is a constant that depends only on $\epsilon$.
\end{remark}
{\bf Acknowledgement:} The authors are grateful to Ramon van Handel and Joel Tropp for many insightful comments and discussions. The second author is supported by NSF DMS-1856221.

\end{document}